\numberwithin{equation}{section}
\newtheorem{theorem}{Theorem}[section]
\newtheorem*{theorem*}{Theorem}
\newtheorem{lemma}[theorem]{Lemma}
\newtheorem{proposition}[theorem]{Proposition}
\newtheorem{definition}[theorem]{Definition}
\theoremstyle{remark}
\newtheorem{remark}[theorem]{Remark}
\def \supp {\text{\rm supp\hspace{.1em}}}
\def \Lip {\text{\rm Lip\hspace{.1em}}}
\def \beq {\begin{equation}}
\def \eeq {\end{equation}}
\def \bpf {\begin{proof}}
\def \epf {\end{proof}}
\def \beqq {\begin{equation*}}
\def \eeqq {\end{equation*}}
\def \tilde {\widetilde}
\def \RR {{\mathbb R}}
\def \SS {{\mathbb S}}
\def \ii {\text{\rm i}}
\def \bfU {\text{\bf U}}
\def \bfu {\text{\bf u}}
\def \lvr {\langle v\rangle}
\def \lvpr {\langle v'\rangle}
\def \leVr {\langle \epsilon V\rangle}
\def \leVpr {\langle \epsilon V'\rangle}
\def \fu {\mathfrak{u}}
\def \cI {\mathcal{I}}
\def \tu {\tilde{u}}
\begin{document}
\title[Fractional Fokker--Planck \& Fermi pencil--beams
]{Pencil-beam approximation of fractional Fokker-Planck}
\author{Guillaume Bal$^\dagger$}
\thanks{$^\dagger$Departments of Statistics and Mathematics, University of Chicago, Chicago, IL.\\ {\em Email address:} \texttt{guillaumebal@uchicago.edu}}
\author{Benjamin Palacios$^\ddagger$}
\thanks{$^\ddagger$Department of Statistics, University of Chicago, Chicago, IL.
\\ {\em Email address:} \texttt{bpalacios@uchicago.edu}}

\maketitle

\begin{abstract}
 We consider the modeling of light beams propagating in highly forward-peaked turbulent media by fractional Fokker-Planck equations and their approximations by fractional Fermi pencil beam models. We obtain an error estimate in a 1-Wasserstein distance for the latter model showing that beam spreading is well captured by the Fermi pencil-beam approximation in the small diffusion limit.
\end{abstract}


\section{Introduction}\label{sec:intro}

The stationary Radiative Transfer Equation (sRTE) models the mesoscopic equilibrium state of particles, such as photons or electrons, propagating across heterogenous media. The density of particles at the equilibrium state results from the interaction of them with the microscopic constituents of the underlying background, by virtue of absorption and scattering. More specifically, denoting by $u(x,\theta)$ the phase-space density of particles, this is, those located at position $x\in\RR^d$ and moving in direction $\theta\in\SS^{d-1}$, the sRTE is given by
\beq\label{sRTE}
\theta\cdot\nabla_x u +\lambda u = \cI(u)+f,
\eeq
where $f(x,\theta)$ represents a source or sink; $\lambda(x)$ measures absorption caused by the medium; and $\cI(u)$ controls the rate of collisions between particles and the background. It is commonly defined as an integral operator of the form
\beqq
\cI(u) = \int_{\SS^{d-1}}k(x,\theta,\theta')\left(u(x,\theta') - u(x,\theta)\right)d\theta',
\eeqq
where the scattering kernel $k(x,\theta',\theta)$ represents the probability of a particle at a location $x$, changing direction from $\theta'$ to $\theta$ due to collisions. It is commonly assumed to be symmetric in the angular variable, this is, $k(x,\theta',\theta)=k(x,\theta,\theta')$.

In specific circumstances, a single or even a few scattering interactions are not enough to considerably change the direction of propagation of traveling particles. However, when collisions occur with high frequency, for example in the propagation of light in biological tissues or in thick atmospheres, their accumulative effect becomes significant and therefore visible at a larger scale. This is the regime of {\em highly forward-peaked scattering}. It intents to model a diffusive transport at a scale where the initial directionality of particles is not completely lost, but frequent enough to produce some mixing effect. It is suitable for the study of electron and photon transport \cite{BL2,LL,OF}, optical imaging and microscopy \cite{CCOCPH,KiKe}, and laser propagation on turbulent media \cite{HaBDH,RoRe}. More details on its derivation and application to inverse problem can be found in \cite{B,BKR}, and we refer the reader to \cite{DL} for a mathematical introduction to the theory of the transport equation.

A standard choice for scattering kernel in this regime is the well-known {\em Henyey--Greenstein} phase function \cite{HG}, defined in 3-dimensional space as
\beqq
k_{HG}(\theta',\theta) := \frac{1}{\Delta}\cdot \frac{1-g^2}{\left(1+g^2-2g\theta'\cdot\theta\right)^{3/2}}.
\eeqq
The parameter $\Delta>0$ is the {\em mean free path}, the mean distance between successive interactions of particles with the underlying medium; while $g\in(-1,1)$ is the {\em anisotropy factor}, representing forward-peaked scattering and determining the {\em mean scattering cosine} associated to $k_{HG}(\theta,\theta')$. The closer $g$ is to 1, the more forward-peaked the scattering is. Physically, the most meaningful distance is the {\em transport mean free path} given by $\Delta^*=\Delta/(1-g)$. It measures the average distance over which particles change directions `significantly' after undergoing a large number of forward-peaked collisions when $g$ is close to $1$. 

Its been pointed out in a number of publications that Henyey--Greenstein scattering has small but significant large-angle effect, and this behavior is not accurately captured by standard approximation models such as the (local) Fokker--Planck equation and its respective Fermi--Eyges pencil-beam equation, which are better for numerical implementations and practical purposes. See for instance \cite{BL2, KiKe}. Moreover, the derivation of these simpler approximation models are known to fail for this specific kernel \cite{Po}. Phenomena of this type are not specific to the Henyey--Greenstein kernel and occur also for the Rutherford scattering kernel in electron transport \cite{BL2,LL}. 

There has been previous attempts to find better suited approximation model such as the Boltzmann--Fokker--Planck equation and the Generalized Fokker--Planck equation (or Leakeas--Larsen equation) \cite{LL}.  
In the highly forward-peaked limit when $g\to 1$, it was shown in \cite{AS} (for the time-evolution case) that (a rescaling of) global solutions to RTE with Henyey-Greenstein scattering cross-section, weakly converge in $L^2$ to functions satisfying a fractional version of Fokker--Planck, where the limiting scattering term involves a singular integral operator which resembles a fractional Laplace-Beltrami operator on the unit sphere. The hypoelliptic property of the associated integro-differential operator was also analyzed. 
Similar results have been obtained in \cite{GPR1,GPR2} for the radiative transfer equation with long-range interactions. Analyzing the limiting scaling of this equation, the authors demonstrated the emergence of a Fokker--Planck type operator in the highly forward-peaked limit, with a diffusion component consisting of another singular integral operator whose high frequency behavior equals that of the Laplace-Beltrami operator on the sphere. The first publication provides precise hypoelliptic estimates for such operators.

The main goal of this paper is the study of the propagation of narrow beams in the highly forward-peaked regime. This corresponds to the analysis of the fractional Fokker--Planck equations obtained in \cite{AS}, for particle sources that are highly concentrated in phase-space and with a choice of scales so that $\Delta^*\gg 1$. The latter condition ensures that the beams remain narrow over a domain of interest. In this setting, the magnitude of the diffusion coefficient in the fractional Laplacian is small compared to the spatial dynamic of particles and characterized by a small parameter $\epsilon$. Analogously to what the authors have done in the local case ($s=1$) in \cite{BP}, we provide a higher order approximation to the Fokker--Planck solution by means of pencil-beams, which are solutions to appropriate fractional Fermi pencil-beam equations. We establish error estimates to contrast the accuracy of the approximations for the pencil-beam model and the ballistic (i.e., unscattered) transport solution. We perform this analysis in an adapted 1-Wasserstein sense with a level of accuracy given in terms of powers of the diffusion parameter $\epsilon$. Similarly as in \cite{AS}, we consider a generalized version of the Henyey--Greenstein forward-peaked scattering.

\subsection{Highly forward-peaked radiative transfer for narrow beams} 
In space dimension $d\geq 2$, 
we say  $k(x,\theta',\theta)$ is a {\em Henyey--Greenstein scattering kernel with parameters $(g,s,m)$} whenever
\beq\label{HG_kernels}
k_g(x,\theta',\theta) := \frac{b(x)}{\Delta}\cdot\frac{(1-g)^m(1+g)^m}{\big((1-g)^2+2g(1-\theta'\cdot\theta)\big)^{\frac{d-1}{2}+s}},\quad(x,\theta',\theta)\in\RR^d\times\SS^{d-1}\times\SS^{d-1},
\eeq
$b$ a Lipschitz function, and with $m>0$ so that 
\beq\label{def_m}
\int_{\SS^{d-1}}k_g(x,\theta',\theta)d\theta' = O(\Delta^{-1}).
\eeq
We recover the standard 3-dimensional kernel when the parameters are $(g,1/2,1)$. For a general fractional exponent $s\in(0,1)$, one easily verifies that in dimension $d=3$, $(g,s,2s)$ satisfies the required scaling for the integral of $b_g$ in \eqref{def_m}. 
The collision operator associated to such kernels is given by
\beqq
\cI_g(u) := \int_{\SS^{d-1}}k_g(x,\theta,\theta')\left(u(x,\theta') - u(x,\theta)\right)d\theta'.
\eeqq

We work in the {\em narrow beam regime for particle transport} defined by the following hypotheses:
\begin{itemize}
\item[i.] The scattering kernel is of Henyey--Greenstein type with parameters $(g,s,m)$.
\item[ii.] The transport mean free path is given by $\Delta^*:=\Delta/(1-g)^m$, where $(1-g)^{m} \ll \Delta$. More precisely, we take $(1-g)^{m} = \epsilon^{2s}\Delta$, for $\epsilon\ll1$.
\item[iii.] The source term $f(x,\theta)$ is highly concentrated near a single point $(x
_0,\theta_0)\in\RR^{d}\times\SS^{d-1}$. 
\end{itemize}
The exponent $2s$ in condition $ii$ is included here for notational convenience, and agrees with the exponent considered in \cite{BP} as we approach the local case $s=1$. Without loss of generality, we assume in most of the article that $(x_0,\theta_0) = (0,N)$ with $N=(0,\dots,0,1)$. We also consider the following modified version of $iii$:
\begin{itemize}
\item[iii'.]  $f\in L^\infty_{x,\theta}\cap L^1_{x,\theta}$ is a compactly supported $\delta$-approximation to the identity $\delta_{x}\delta_{N}(\theta)$, i.e., such that $\left|\int f\varphi dxd\theta - \varphi(0,N)\right|\lesssim \delta$ for all $\varphi\in C(\RR^d\times\SS^{d-1})$.
 \end{itemize}

\subsection{Fractional Fokker--Planck equation (fFPE)} 
As $g\to 1$, a Henyey-Greenstein kernel formally converges to the singular one
\beq\label{limiting_kernel}
k(x,\theta,\theta') := \frac{2^{m-\frac{d-1}{2}-s}\epsilon^{2s}b(x)}{\big(1-\theta\cdot\theta'\big)^{\frac{d-1}{2}+s}},
\eeq
whose link to the fractional Laplacian was noticed in \cite{AS} and established by means of a stereographic transformation $\mathcal{S}:\SS^{d-1}\backslash\{S\}\to \RR^{d-1}$, with $S=-N$. The unit sphere is considered to be an embedded hyper-surface in $\RR^{d}$, where spatial coordinates have been fixed so that $N = (0,\dots,0,1)=-S$. The stereographic coordinates (with respect to $N$) and its associated surface measure are then defined by
$$
v = \mathcal{S}(\theta):= \frac{1}{(1+\theta_d)}(\theta_1,\dots,\theta_{d-1})\quad\text{and}\quad d\theta=\frac{2^{d-1}}{\langle v\rangle^{2(d-1)}}dv,\quad\text{with}\quad \langle v\rangle = (1+|v|^2)^{1/2};
$$
while its inverse transformation is defined as
$$
\theta = \mathcal{S}^{-1}(v) := \left(\frac{2v}{\langle v\rangle^2}, \frac{1-|v|^2}{\langle v\rangle^2}\right).
$$
The identity
\beqq\label{cos_identity}
1-\theta\cdot\theta' = 2\frac{|v-v'|^2}{\lvr^2\lvpr^2},
\eeqq
allows us to write the limiting singular integral  $\int_{\RR^{d-1}}k(x,\theta',\theta)\big(u(\theta')-u(\theta)\big)d\theta'$ 
in stereographic coordinates as
\beqq\label{scat_op_fFP}
 \int_{\RR^{d-1}}\lvr^{2(d-1)} k_{\mathcal{S}}(x,v',v)\big(u(v')-u(v)\big)dv',
\eeqq
where $[u]_\mathcal{S} = u(v)$ stands for the pullback $u(\mathcal{S}^{-1}(v))$, and with kernel such that 
\beqq\label{scat_kernel_fFP}
k_{\mathcal{S}}(x,v',v)dv'dv = \left[k(x,\theta',\theta)d\theta' d\theta\right]_{\mathcal{S}}= \frac{2^{m-2s}\epsilon^{2s} b(x)dv'dv}{|v-v'|^{d-1+2s}\lvpr^{d-1-2s}\lvr^{d-1-2s}}.
\eeqq
By introducing the following version of the fractional Laplacian on the unit sphere (as in \cite{AS}), 
\beq\label{def:frac_Lap_sphere}
\left[(-\Delta_\theta)^su\right]_{\mathcal{S}} := \langle \cdot\rangle^{d-1+2s}\big(-\Delta_v\big)^sw_{\mathcal{S}},\quad w_{\mathcal{S}} = \frac{[u]_{\mathcal{S}}}{ \langle \cdot\rangle^{d-1-2s}},
\eeq
with the Euclidean fractional Laplacian given by the singular integral
\beqq
(-\Delta_v)^sg(v) := c_{d-1,s}\int_{\RR^{d-1}} \frac{f(v) - f(v+z)}{|z|^{d-1+2s}}dz,
\eeqq
we see that in the (so far formal) limit $g\to 1$, we encounter the {\em fractional Fokker--Planck equation}
\beq\label{fFP_diff}
\theta\cdot\nabla_x u +\lambda(x) u +\epsilon^{2s}\sigma(x)\Big((-\Delta_\theta)^su - cu\Big)= f,\quad \text{in }\RR^d\times \SS^{d-1},
\eeq
where $\sigma(x):= c^{-1}_{d-1,s}2^{m-2s}b(x)$ and $c=c_{s,d}>0$, for suitable positive constants $c_{d-1,s},c_{d,s}$ (see appendices A.2 and A.3 in \cite{AS} for more details). A rigorous convergence result is presented in Section \ref{sec:sRTE_fFP} (see also \cite{AS}). 

Using this notation, the limiting kernel \eqref{limiting_kernel} takes the form $k(x,\theta,\theta') = \epsilon^{2s}\sigma(x)K(\theta',\theta)$, where we define its angular part and the associated integral operator as
\beq\label{kernel_K}
K(\theta',\theta) := \frac{c_{d-1,s}2^{s-\frac{d-1}{2}}}{\left(1-\theta\cdot\theta'\right)^{\frac{d-1}{2}+s}},\quad \cI_\theta(u):= \int K(\theta',\theta)(u(\theta')-u(\theta))d\theta'.
\eeq
It will be occasionally more convenient to work with the integro-differential version
\beq\label{fFP_id}
\theta\cdot\nabla_x u +\lambda(x) u = \epsilon^{2s}\sigma(x)\cI_\theta(u)+f(x,\theta),
\eeq
where, in stereographic coordinates, the integral part takes the form
\beq\label{fFP_kernel}
\begin{aligned}
&[\cI_\theta(u)]_\mathcal{S}:= c_{d-1,s} \int_{\RR^{d-1}}\big(u(v')-u(v)\big)K_\mathcal{S}(v',v)\lvr^{2(d-1)}dv',\\
&\text{with}\quad  K_\mathcal{S}(v',v):= \frac{\lvpr^{-(d-1-2s)}\lvr^{-(d-1-2s)}}{|v-v'|^{d-1+2s}}.
\end{aligned}
\eeq
Throughout the paper, we will assume the following conditions on the coefficients of equation \eqref{fFP_id}:

\beq\label{cond_coeff}
\begin{aligned}
&\text{\em $\sigma$ and $\lambda$ are Lipschitz continuous, and there exist constants}\\
&\text{\em $\sigma_0,\lambda_0>0$, such that $\sigma_0\leq \sigma(x)\leq \sigma_0^{-1}$ and $\lambda_0\leq \lambda(x)\leq \lambda_0^{-1}$.} 
\end{aligned}
\eeq

\subsection{Fermi pencil-beam approximation}
Similarly as in the original derivation of the Fermi pencil-beam equation (see, for instance, \cite{BL1}), we can derive an approximation to the fractional Fokker--Planck equation \eqref{fFP_diff} by following simple formal computations. Let $u$ be the solution to \eqref{fFP_diff} (or equivalently \eqref{fFP_id}-\eqref{fFP_kernel}) upon which we perform a change of coordinates that we proceed to define. We introduce the {\em stretched coordinate system}, or {\em pencil-beam coordinates}, as
\beqq
X := ((2\epsilon)^{-1}x',x^d)\quad\text{and}\quad V = \epsilon^{-1}\mathcal{S}(\theta),
\eeqq
defined on $\RR^{d}\times\RR^{d-1}$. In addition, the volume form on $\RR^{d}\times\SS^{d-1}$ adopts the representation
\beqq
dxd\theta =\frac{(2\epsilon)^{2(d-1)}}{\leVr^{2(d-1)}}dXdV.
\eeqq
We see that in pencil-beam coordinates, the function $U(X,V) = (2\epsilon)^{2(d-1)}u(2\epsilon X', X^d,\mathcal{J}(\epsilon V))$ satisfies the equation
\beqq
\begin{aligned}
&\left(\frac{V}{\langle\epsilon V\rangle^2}, \frac{1-\epsilon^2|V|^2}{\langle \epsilon V\rangle^2}\right)\cdot \nabla_{X}U + \lambda(2\epsilon X',X^d) U\\
&= \epsilon^{2s}\sigma(2\epsilon X',X^d) \cdot c_{d-1,s} \int_{\RR^{d-1}} \left(U(X,V') - U(X,V)\right)K_\mathcal{S}(\epsilon V,\epsilon V')\leVr^{2(d-1)}dV' + F,
\end{aligned}
\eeqq
for  $F(X,V)=(2\epsilon)^{2(d-1)}f(2\epsilon X',X^d,\mathcal{J}(\epsilon V))$. By ignoring the dependence of $U$ and $F$ with respect to $\epsilon$, we can take the point-wise limit as $\epsilon\to 0$ in order to (formally) deduce the {\em fractional Fermi pencil-beam} equation
\beq\label{fFpb}
\begin{array}{ll}
\mathcal{P}(U):=\partial_{X^d}U + V\cdot\nabla_{X'} U +\tilde{\lambda} U + \tilde{\sigma}(-\Delta_V)^sU= F,& \text{in }\RR^d_+\times\RR^{d-1},
\end{array}
\eeq
where $\tilde{\sigma}(X^d) = \sigma(0,X^d)$ and $\tilde{\lambda}(X^d) = \lambda(0,X^d)$. $\RR^d_+$ stands for the half-space $X=(X',X^d)\in\RR^d$ with $X^d>0$.

We also define the {\em backward (or adjoint) problem} as
\beq\label{b_fFpb}
\begin{array}{ll}
\mathcal{P}'(W):=-\partial_{X^d}W - V\cdot\nabla_{X'} W +\tilde{\lambda} U + \tilde{\sigma}(-\Delta_V)^sW= F,& \text{in }\RR^d_+\times\RR^{d-1}.
\end{array}
\eeq
For more details on the derivation of the Fermi pencil-beam equation we refer the reader to \cite{BL1}.

\subsection{1-Wasserstein distance}
Narrow beam solutions are by construction singular. Approximation errors need to be measured using a metric that adequately captures beam spreading, which is intuitively a form of a `mass' transport. A natural notion to measure the mass transport of probability measures is given by the 1-Wasserstein (or earth-mover) distance. In our setting, while solutions preserve positivity, their total mass may vary. As we have done in, e.g., \cite{BJ, BP} for similar reasons, we consider a generalization of the standard 1-Wasserstein distance.


We denote by $BL_{1,\kappa}$ the set of $\psi\in C(\RR^d\times\SS^{d-1})$ such  that $\|\psi\|_\infty\leq 1$ and $\Lip(\psi)\leq\kappa$, for a fixed parameter $\kappa\geq 1$. 
Given two finite Borel measures in $\RR^d\times\SS^{d-1}$, $\mu$ and $\nu$, their 1-Wasserstein distance is defined as
\beqq
\mathcal{W}^1_\kappa(\mu,\nu) :=\sup\left\{\int \psi (\mu-\nu) : \psi\in BL_{1,\kappa}\right\}.
\eeqq
For any $\Omega\subset\RR^d$ open and bounded, we can similarly define a 1-Wasserstein distance in the compact region $\bar{\Omega}\times\SS^{d-1}$ by restricting the test functions to this set, which we denote by $\mathcal{W}^1_{\bar{\Omega},\kappa}(\mu,\nu)$. 

The parameter $\kappa$ defines the spatial scale $\kappa^{-1}$ over which we wish to penalize transport. The uniform bound on $\psi$ allows us to penalize variations in total mass.

\subsection{Main results} \label{subsec:main_results}
Our main result asserts that a suitable transformation of a Fermi pencil-beam solution approximates the solution to the fractional Fokker--Planck equation, at a higher accuracy than the ballistic solution ---the one that completely neglects diffusion (i.e., a solution to \eqref{fFP_diff} for $\epsilon=0$).
\begin{theorem}\label{thm:narrow_beam}
Assume that conditions {\em i}, {\em ii} and {\em iii'} hold with the latter satisfied for $\delta\lesssim \epsilon^{2s}\kappa$. Let $u$ be the solution to the fractional Fokker--Planck equation \eqref{fFP_diff}, $v$ the ballistic solution, and $U$ the solution to the corresponding fractional Fermi pencil-beam equation \eqref{fFpb}. The pencil-beam approximation in the original variables is defined by
\beqq
\mathfrak{u}(x,\theta) := (2\epsilon)^{-2(d-1)}U((2\epsilon)^{-1}x',x^d,\epsilon^{-1}\mathcal{S}(\theta)).
\eeqq
Then, for any $s'\in(0,s)$ in dimension $d\geq 3$, or any $s'\in(2s-1,s)$ in dimension $d=2$, there exist positive constants $M_1=M_1(d,s,s')$, $M_2=M_2(d,s,s')$ and $m=m(d,s)$ (depending also on $\lambda$ and $\sigma$) such that
$$
\mathcal{W}^1_{\kappa}(u,\mathfrak{u})\leq M_1\cdot\epsilon^{2s'}\kappa^{s'},\quad\text{and}\quad m\cdot\min\{\epsilon\kappa,1\}\leq \mathcal{W}^1_{\kappa}(v,\mathfrak{u})\leq M_2\cdot(\epsilon\kappa)^{\min\{2s',1\}},
$$
where $M_1\to \infty$ as $s'\to s$, while $M_2\to\infty$ when $s'\to s$ for $s\leq1/2$, otherwise, $M_2$ is independent of $s'$.
\end{theorem}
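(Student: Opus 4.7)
The plan is to control $\int\psi(u-\mathfrak{u})\,dxd\theta$ for arbitrary $\psi\in BL_{1,\kappa}$ by testing against the backward pencil-beam equation \eqref{b_fFpb}. I pull $\psi$ back to pencil-beam coordinates by setting $\Psi(X,V):=\psi(2\epsilon X',X^d,\mathcal{S}^{-1}(\epsilon V))$ and let $W$ solve $\mathcal{P}'(W)=\Psi$. Pairing with $\mathcal{P}(U)=F$ gives $\int\Psi U\,dXdV=\int WF\,dXdV$, which after reverting to original variables becomes $\int\psi\,\mathfrak{u}\,dxd\theta=\int\mathfrak{w}\,f\,dxd\theta$, where $\mathfrak{w}$ is the back-transformed $W$. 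By condition \emph{iii'}, $f$ is a $\delta$-approximation to $\delta_{0}\otimes\delta_N$, so the pairing equals $\mathfrak{w}(0,N)$ up to $O(\delta\,\Lip(\mathfrak{w}))$. A parallel pairing of $u$ against the backward fFPE driven by $\psi$ yields $\int\psi u\,dxd\theta=w(0,N)+O(\delta\,\Lip(w))$, so the error is governed by the difference of the two adjoint solutions at the beam tip.

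\textbf{Residual from plugging $\mathfrak{u}$ into the fFPE.} Applying the forward fFPE generator to $\mathfrak{u}$ produces $f+R$, where the residual $R$ splits into (i) the kinetic mismatch between $\theta\cdot\nabla_x$ and $\partial_{X^d}+V\cdot\nabla_{X'}$ after the change of variables, explicitly of size $\epsilon^2|V|^2$ times the transport part of $\mathfrak{u}$; (ii) the coefficient mismatch from freezing $\sigma,\lambda$ at $x'=0$, which is $O(\epsilon)\Lip(\sigma,\lambda)$; and (iii) the nonlocal mismatch between the spherical $\epsilon^{2s}\sigma((-\Delta_\theta)^s-c)$ and the Euclidean $\tilde{\sigma}(-\Delta_V)^s$, which I quantify via the explicit kernel representation \eqref{fFP_kernel}--\eqref{def:frac_Lap_sphere} by splitting it into a near-singularity piece (controlled by fractional regularity of $U$) and a velocity-tail piece (controlled by moments of $U$). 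Since $u-\mathfrak{u}$ solves the fFPE with source $-R$, duality reduces the estimate to $|\int\psi(u-\mathfrak{u})|=|\int wR|$.

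\textbf{Regularity bounds and the restriction $s'<s$.} Controlling $\int wR$ requires: (a) velocity-weighted bounds on $U$ ensuring integrability of $|V|^k U$ against the diffusive spreading of $\mathcal{P}$; (b) hypoelliptic fractional Sobolev regularity of $w$ in $\theta$ of order strictly less than $2s$; and (c) Lipschitz control of $\mathfrak{w}$ for the $\delta$-approximation step, which is precisely why the hypothesis $\delta\lesssim\epsilon^{2s}\kappa$ appears. The loss $s'<s$ is the familiar hypoelliptic trade-off: the Lipschitz datum $\psi$ produces only $2s'<2s$ fractional smoothness in $w$ uniformly in $\epsilon,\kappa$, with the constant blowing up as $s'\to s$, which is the source of $M_1,M_2\to\infty$ at the endpoint. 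In dimension $d=2$, the factor $\langle\epsilon V\rangle^{d-1+2s}$ in \eqref{def:frac_Lap_sphere} becomes critically singular in the velocity tails and forces the extra threshold $s'>2s-1$ in order for velocity moments of $W$ to dominate it. Collecting the contributions yields $|\int\psi(u-\mathfrak{u})|\lesssim\epsilon^{2s'}\kappa^{s'}$.

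\textbf{Ballistic comparison and main obstacle.} For $v$, repeating the duality with the pure transport generator replaces $R$ by the isolated fractional-diffusion term of $\mathcal{P}\mathfrak{u}$, producing the upper bound $(\epsilon\kappa)^{\min\{2s',1\}}$; the saturation at exponent one reflects that $\Lip(\psi)\leq\kappa$ cannot be exploited past unit scale. For the matching lower bound, I exhibit a concrete Lipschitz bump $\psi\in BL_{1,\kappa}$ of width $\kappa^{-1}$ centered on the ballistic ray and compute $\int\psi(v-\mathfrak{u})$ by contrasting the Dirac-like mass of $v$ with the explicit fractional-heat-kernel spread of $\mathfrak{u}$ on $\RR^{d-1}$, obtaining the lower bound $m\min\{\epsilon\kappa,1\}$. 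The main technical obstacle throughout is piece (iii) of $R$: the nonlocality of $(-\Delta_\theta)^s$ prevents Taylor expansion, so a careful kernel-level decomposition exchanging angular regularity for velocity decay is needed, and it is precisely this exchange that gives rise to the regularity loss encoded in $s'<s$.
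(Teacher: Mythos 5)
Your overall strategy coincides with the paper's: a duality argument against the backward fractional Fokker--Planck equation, a decomposition of the residual of $\mathfrak{u}$ into kinetic, coefficient-freezing, and nonlocal (spherical vs.\ Euclidean fractional Laplacian) mismatches controlled by velocity moments of the fractional kernel, and an explicit exponential test function of width $\kappa^{-1}$ for the lower bound. However, there is one genuine gap: your piece (ii), the coefficient mismatch from freezing $\sigma,\lambda$ at $x'=0$, is claimed to be $O(\epsilon)\Lip(\sigma,\lambda)$, and this does not suffice when $s'>1/2$ (which the theorem permits for $s>1/2$ and $d\geq 3$). Since $\kappa\geq 1$ and $\epsilon\ll 1$, one has $\epsilon\gg\epsilon^{2s'}\kappa^{s'}$ whenever $2s'>1$, so the naive Lipschitz bound caps your estimate at $\epsilon^{\min\{1,2s'\}}$ rather than the stated $M_1\,\epsilon^{2s'}\kappa^{s'}$. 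The paper extracts the extra order by exploiting a cancellation: the first spatial moment of the pencil-beam kernel vanishes, $\int (X'\cdot\xi_0)\,U\,dX'dV=0$ (Lemma \ref{lemma:radial_sym}), so after Taylor-expanding $\lambda(\epsilon X',X^d)-\lambda(0,X^d)$ the leading linear-in-$X'$ term pairs to zero against any function constant in $X'$; one then replaces the rescaled adjoint solution $\Phi$ by the backward pencil-beam solution $W$ (at the cost of an admissible $O(\epsilon^{\min\{1,2s'\}})$ error, Lemma \ref{lemma:ord1_approx}) and uses the Lipschitz continuity of $W$ in $(X',V)$ (Lemma \ref{lemma:FPb_Lip}) to gain the remaining factor $\epsilon^{2s'-1}\kappa^{2s'-1}$. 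An analogous symmetry-plus-regularity step is needed for the $\sigma$-mismatch multiplying $(-\Delta_V)^sU$. Without this mechanism your argument does not reach the claimed rate in the regime $s>1/2$.

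A secondary, more conceptual point: you attribute the loss $s'<s$ to hypoelliptic regularity of the adjoint solution $w$. In the paper the $\epsilon$-loss comes instead from the heavy velocity tails of the fractional kernel $\mathfrak{J}\sim |V|^{-(d-1+2s)}$, whose moments $\||V|^{m}J\|_{L^1}$ are finite only for $m<2s$; the interpolation $\epsilon^2|V|^2\langle\epsilon V\rangle^{-2m}\leq\epsilon^{2s'}|V|^{2s'}$ is what converts the $O(\epsilon^2)$ geometric errors into $O(\epsilon^{2s'})$ against integrable moments (and its validity condition $d-2s\geq 1-s'$ is exactly the source of the extra restriction $s'>2s-1$ in $d=2$). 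The factor $\kappa^{s'}$, by contrast, does come from interpolating the Lipschitz and sup bounds of the adjoint data, as you indicate. You do invoke the moment bounds elsewhere, so this is a misattribution rather than a fatal error, but the coefficient-freezing cancellation above must be supplied for the proof to close.
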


The proof of this result is split into Theorems \ref{thm:fFP_vs_pb} and \ref{thm:ball_vs_pb}, which we prove in Sections \ref{subsec:fFpb_approx} and \ref{subsec:ballistic_approx}, respectively. 

By linearity, we may generalize the above pencil-beam approximation to sources that do not satisfy condition {\em iii} (and hence {\em iii'}). We construct a general approximation by (continuously) superposing pencil-beams. The details of this definition are postponed to Section \ref{sec:superposition}.
\begin{theorem}[Approximation via continuous superposition]\label{thm:cont_sup}
Under the hypotheses of the previous theorem, except for hypothesis \text{\em iii}, 
the same conclusion holds for a broad source $f$ satisfying
\beqq
f\in L^1(\RR^d\times\SS^{d-1}),\quad f\geq 0\quad\text{and}\quad\text{supp}(f)\Subset \RR^d\times\SS^{d-1},
\eeqq
and $\mathfrak{u}(x,\theta) := \int_{\RR^d\times\SS^{d-1}}f(y,\eta)\mathfrak{u}(x,\theta;y,\eta)dyd\eta$, a continuous superposition of pencil-beams.
\end{theorem}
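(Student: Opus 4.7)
The proof leverages the linearity of equations \eqref{fFP_diff}, \eqref{fFpb}, and the ballistic transport equation. The plan is to decompose each of $u$, $v$, and $\mathfrak{u}$ as a continuous superposition of single-beam solutions indexed by the source location $(y,\eta)$, apply the single-beam estimates of Theorem \ref{thm:narrow_beam} pointwise, and then integrate the resulting bounds against $f$ using the duality definition of $\mathcal{W}^1_\kappa$.

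For each $(y,\eta) \in \supp(f)$ let $u_{y,\eta}$, $v_{y,\eta}$, and $\mathfrak{u}_{y,\eta}$ denote the Fokker--Planck, ballistic, and pencil-beam solutions (the latter constructed in the coordinate frame translated by $y$ and rotated so that $N$ aligns with $\eta$) for a unit-mass source concentrated at $(y,\eta)$. Linearity yields $u = \int f(y,\eta)\,u_{y,\eta}\,dy\,d\eta$ and the analogous identities for $v$ and $\mathfrak{u}$. Given $\psi \in BL_{1,\kappa}$, Fubini together with $f \geq 0$ provide
\beqq
\int \psi\,(u-\mathfrak{u})\,dx\,d\theta = \int f(y,\eta)\left(\int \psi\,(u_{y,\eta}-\mathfrak{u}_{y,\eta})\,dx\,d\theta\right) dy\,d\eta \leq \|f\|_{L^1}\sup_{(y,\eta)\in\supp(f)}\mathcal{W}^1_\kappa(u_{y,\eta},\mathfrak{u}_{y,\eta}),
\eeqq
with an analogous chain for the upper bound on $\mathcal{W}^1_\kappa(v,\mathfrak{u})$. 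To see that the supremum is finite with the desired scaling, note that in the frame centered at $(y,\eta)$ the transported coefficients remain uniformly Lipschitz and uniformly bounded by $\sigma_0^{\pm 1}$ and $\lambda_0^{\pm 1}$ by \eqref{cond_coeff}; compactness of $\supp(f)$ then permits taking the supremum of the single-beam constants $M_1, M_2$ over $(y,\eta)\in\supp(f)$ while preserving the $\epsilon^{2s'}\kappa^{s'}$ and $(\epsilon\kappa)^{\min\{2s',1\}}$ rates of Theorem \ref{thm:narrow_beam}.

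The lower bound $m\cdot\min\{\epsilon\kappa,1\}\leq \mathcal{W}^1_\kappa(v,\mathfrak{u})$ is not a consequence of the superposition estimate, since positivity and linearity transmit upper bounds but not lower bounds. The plan is to localize: pick $(y_0,\eta_0)$ in the interior of $\supp(f)$ and a small ball $B$ around it on which $f \geq c_0 > 0$, then build a test function $\psi \in BL_{1,\kappa}$ supported near the pencil-beam issued from $(y_0,\eta_0)$ so that contributions from other source locations are controllably small. This reduces the lower bound to the single-beam inequality of Theorem \ref{thm:narrow_beam} applied at $(y_0,\eta_0)$, at the cost of a smaller constant $m$ depending on $f$ and the geometry of $\supp(f)$.

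The principal technical obstacle is that Theorem \ref{thm:narrow_beam} is stated for a compactly supported $\delta$-approximation to $\delta_0\delta_N$ with $\delta \lesssim \epsilon^{2s}\kappa$, not for a genuine point source. I would dispose of this by a mollification-and-limit argument: replace each delta in the superposition by a bump of width $\delta \sim \epsilon^{2s}\kappa$, apply Theorem \ref{thm:narrow_beam} (whose bounds are insensitive to $\delta$ within the admissible range), and pass to the limit $\delta \to 0$ using the $L^1$-stability of the three equations involved. Justifying the commutation of the dual pairing against $\psi$ with the parametric integration against $f$ in this limit, and checking Fubini in the possibly distributional setting of the single-beam Green's functions, is where most of the care is required.
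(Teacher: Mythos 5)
Your proof is correct in spirit and arrives at the right place, but it takes a heavier route than the paper, and the main obstacle you flag is one the paper never encounters.

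The paper does \emph{not} decompose $u$ into a superposition of point-source Fokker--Planck solutions $u_{y,\eta}$. Instead, it runs a duality argument at the level of the adjoint: for a test function $\psi\in BL_{1,\kappa}$ let $\varphi$ solve the backward fFP equation \eqref{backward_fFP} with source $\psi$; then $\int u\psi\,dxd\theta = \int f\varphi\,dyd\eta$ directly, and
\beqq
\int_Q\psi\,(u-\mathfrak{u})\,dxd\theta = \int_Q f(y,\eta)\Bigl(\varphi(y,\eta) - \int_Q\mathfrak{u}(x,\theta;y,\eta)\psi(x,\theta)\,dxd\theta\Bigr)dy\,d\eta.
\eeqq
The bracketed quantity is controlled, uniformly in $(y,\eta)\in\supp(f)$, by the (translated/rotated) Lemma \ref{lemma:approx_fFpb}. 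Crucially, that lemma is already stated for the genuine pencil-beam fundamental solution ($G=\delta(X')\delta(V)$, $F=0$); the only thing being compared to it is the scalar $\varphi(y,\eta)$, so no distributional Fokker--Planck Green's function and no mollification-and-limit argument are needed. This is exactly the place where your proposal incurs its cost: by insisting on $u=\int f\,u_{y,\eta}$ you are forced to make sense of $u_{y,\eta}$ as a measure, justify Fubini against $\psi$, and then verify an $L^1$-stability limit as $\delta\to 0$. None of that is wrong, but it is extra work the adjoint viewpoint removes, and the ``$\delta$-approximation'' hypothesis in condition iii' is handled once and for all in Theorem \ref{thm:fFP_vs_pb}, not re-encountered here.

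For the lower bound on $\mathcal{W}^1_\kappa(v,\mathfrak{u})$, the paper writes the pairing as $\int f(y,\eta)\,[\,\cdot\,]\,dyd\eta$ with the same bracket \eqref{diff_lb} and asserts that the single-beam lower-bound computation of Theorem \ref{thm:ball_vs_pb} bounds each bracket from below uniformly in $(y,\eta)$; since $f\geq 0$ this yields the claim. Your alternative plan — choose $(y_0,\eta_0)$ in the interior of $\supp(f)$, take $f\geq c_0$ on a neighborhood, and localize $\psi$ near the pencil-beam from $(y_0,\eta_0)$ — is a genuinely different strategy. It is arguably more robust against the possibility that no \emph{single} test function is simultaneously adapted to all directions $\eta\in\supp(f)$, at the price of a smaller constant $m$ depending on $f$ and the geometry of its support. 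If you carry this out, you still need the single-beam lower bound from Theorem \ref{thm:ball_vs_pb} as the underlying engine, so the two approaches are not far apart; but you would need to quantify how much the contributions from $(y,\eta)\neq(y_0,\eta_0)$ degrade the lower bound, which the paper's uniformity claim sidesteps.

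Your remark about compactness of $\supp(f)$ guaranteeing uniform constants (via the Lipschitz and two-sided bounds on $\sigma,\lambda$ in \eqref{cond_coeff}) is correct and matches the paper's reasoning.
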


Our last result concerns the approximation to fractional Fokker--Planck solutions by means of a finite number of pencil-beams.
\begin{theorem}[Approximation via discrete superposition]\label{thm:disc_sup}
Under the hypotheses of the previous theorem, we reach the same conclusions for some $\mathfrak{u}(x,\theta) := \sum_{i=1}^Ia_i\cdot\mathfrak{u}(x,\theta;x_i,\theta_i)$, a discrete superposition of pencil-beams.
\end{theorem}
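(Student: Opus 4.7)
The strategy is to discretize the continuous superposition of Theorem \ref{thm:cont_sup} by a finite quadrature rule in source space, and to bound the extra Wasserstein error this introduces against the already-controlled continuous superposition error. By compactness of $\supp f$, I cover it by finitely many disjoint Borel cells $Q_1,\dots,Q_I$ of diameter at most $h>0$, choose representatives $(x_i,\theta_i)\in Q_i$, and set $a_i := \int_{Q_i} f(y,\eta)\,dy\,d\eta \geq 0$. This produces $\mathfrak{u}_{\text{disc}}(x,\theta) := \sum_{i=1}^{I} a_i\,\mathfrak{u}(x,\theta;x_i,\theta_i)$, which has the form required by the statement. Writing $\mathfrak{u}_{\text{cont}}$ for the continuous superposition, the triangle inequality gives $\mathcal{W}^1_\kappa(u,\mathfrak{u}_{\text{disc}}) \leq \mathcal{W}^1_\kappa(u,\mathfrak{u}_{\text{cont}}) + \mathcal{W}^1_\kappa(\mathfrak{u}_{\text{cont}},\mathfrak{u}_{\text{disc}})$, and analogously for $v$. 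The first term is controlled by Theorem \ref{thm:cont_sup}, while the second factors over the cells as $\sum_i \int_{Q_i} f(y,\eta)\,\mathcal{W}^1_\kappa\!\big(\mathfrak{u}(\cdot;y,\eta),\mathfrak{u}(\cdot;x_i,\theta_i)\big)\,dy\,d\eta$, so everything reduces to a stability estimate for the source-to-pencil-beam map.

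The heart of the proposal is showing that $(y,\eta)\mapsto\mathfrak{u}(\cdot;y,\eta)$ is Lipschitz into the space of finite measures with the $\mathcal{W}^1_\kappa$ metric. Using the rotation--translation normalization built into the derivation of \eqref{fFpb} (which places the source at $(0,N)$), two nearby sources $(y,\eta),(y',\eta')$ produce pencil-beams differing by three effects: (a) a rigid spatial shift of magnitude $|y-y'|$ in the transported coordinates; (b) an angular shift of magnitude $d_{\mathbb{S}}(\eta,\eta')$, which through the ray geometry induces an additional spatial shift bounded by $R\,d_{\mathbb{S}}(\eta,\eta')$, where $R$ bounds $\supp f$ in the $x^d$-direction; and (c) variation of the pencil-beam coefficients $\tilde\sigma(X^d),\tilde\lambda(X^d)$ along the corresponding rays. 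Effects (a) and (b) contribute at most $\kappa(|y-y'|+R\,d_{\mathbb{S}}(\eta,\eta'))\|\mathfrak{u}(\cdot;y,\eta)\|_{L^1}$ to $\mathcal{W}^1_\kappa$, since any $\psi\in BL_{1,\kappa}$ penalizes a shift by $\kappa\cdot$(shift size)$\cdot$(total mass); effect (c) is bounded via the Lipschitz regularity \eqref{cond_coeff} of $\sigma,\lambda$ combined with an $L^1$-type energy estimate for the operator $\mathcal{P}$ applied to the difference of the two pencil-beams. This yields $\mathcal{W}^1_\kappa\!\big(\mathfrak{u}(\cdot;y,\eta),\mathfrak{u}(\cdot;y',\eta')\big)\leq C_\epsilon\,\kappa\,(|y-y'|+d_{\mathbb{S}}(\eta,\eta'))$ for a constant $C_\epsilon$ depending on $\epsilon,\sigma,\lambda$, and $R$.

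The discretization error is then bounded by $C_\epsilon\,\kappa\,h\,\|f\|_{L^1}$, and choosing $h$ small enough so that this quantity is $\leq M_1\epsilon^{2s'}\kappa^{s'}$ (and comparable to the ballistic bound) delivers the conclusions of Theorem \ref{thm:narrow_beam} for the discrete superposition, with a finite number of pencil-beams $I \asymp h^{-(2d-1)}$. The principal obstacle is the stability estimate described in the previous paragraph: although the \emph{qualitative} statement of Theorem \ref{thm:disc_sup} only needs \emph{some} admissible $h$, so that a crude continuity argument for the source-to-pencil-beam map would suffice, a \emph{quantitatively useful} estimate that keeps $I$ moderate requires tracking the $\epsilon$-dependence in $C_\epsilon$ carefully. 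This calls for a perturbation analysis of \eqref{fFpb} with respect to both the source location and the coefficient profile along the ray --- a variation on, but more delicate than, the approximation estimates already carried out in Sections \ref{subsec:fFpb_approx} and \ref{subsec:ballistic_approx}, which I expect to proceed by testing the equation for the difference against suitable $BL_{1,\kappa}$ functions and invoking the same fractional-Laplacian smoothing used there.
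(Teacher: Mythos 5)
Your proposal takes a genuinely different route from the paper, and it contains a gap that the paper's route is specifically designed to avoid.

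The paper's proof does \emph{not} compare $\mathfrak{u}_{\text{disc}}$ to $\mathfrak{u}_{\text{cont}}$. Instead, it discretizes the \emph{source}: choosing a simple function $g=\sum_i a_i\chi_{R_i}$ with $\int|f-g|\,dx\,d\theta<\epsilon^2\kappa$ and cells of diameter $\leq\epsilon^2$, and setting $\tilde a_i=a_i\,\text{meas}(R_i)$, it shows $\mathcal{W}^1_\kappa(f,\tilde f)\lesssim\epsilon^2\kappa$ for the discrete measure $\tilde f=\sum_i\tilde a_i\,\delta_{x_i}\delta_{\theta_i}$. The rest of the argument is then literally the same pairing identity used for Theorem \ref{thm:cont_sup}: $\int\psi(u-\mathfrak u_{\text{disc}})=\int f\varphi -\sum_i\tilde a_i\int\mathfrak u(\cdot;x_i,\theta_i)\psi$, which splits as $\int(f-\tilde f)\varphi + \sum_i\tilde a_i\bigl(\varphi(x_i,\theta_i)-\int\mathfrak u(\cdot;x_i,\theta_i)\psi\bigr)$; the first piece is controlled by the Wasserstein distance of the sources, and the second piece is handled uniformly in $(x_i,\theta_i)$ by Lemma \ref{lemma:approx_fFpb}, exactly as already done for the continuous superposition. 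In other words, the paper compares delta sources, not pencil-beam solutions, so no continuous dependence of $\mathfrak u(\cdot;y,\eta)$ on $(y,\eta)$ needs to be established.

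Your proposal, by contrast, factors through $\mathcal{W}^1_\kappa(\mathfrak u(\cdot;y,\eta),\mathfrak u(\cdot;x_i,\theta_i))$ and asserts a Lipschitz bound $\leq C_\epsilon\kappa\bigl(|y-y'|+d_{\SS}(\eta,\eta')\bigr)$ for the source-to-pencil-beam map. This is a new stability estimate that appears nowhere in the paper and that you do not prove; you yourself flag it as ``the principal obstacle.'' It is not a small gap: the estimate involves comparing solutions of two fractional Fermi pencil-beam equations with \emph{different} coefficients $\tilde\sigma,\tilde\lambda$ (evaluated along rays through $(y,\eta)$ vs.\ $(x_i,\theta_i)$), different affine changes of variables $T^\epsilon_{y,\eta}$ vs.\ $T^\epsilon_{x_i,\theta_i}$, and different stereographic normalizations, and the $\epsilon$-dependence of $C_\epsilon$ is not obviously benign since the stretched variables scale by $\epsilon^{-1}$. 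A ``crude continuity argument'' may well yield \emph{some} admissible $h$, but the burden of producing it is precisely what the paper's source-discretization sidesteps, reducing everything to an elementary estimate on simple-function approximation plus the uniformity in $(y,\eta)$ already built into Lemma \ref{lemma:approx_fFpb}. If you want to salvage your route, you would need to actually carry out the perturbation analysis of \eqref{fFpb} with respect to the ray data; as written, the proof is incomplete.
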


Throughout the paper, we adopt the notation $a \lesssim b$ to denote an estimate of the form $a\leq Cb$, for a constant that is independent of $\epsilon$ and $\kappa$. We will occasionally use the notation: $Q=\RR^d\times\SS^{d-1}$, $Q_+=\RR^d_+\times\SS^{d-1}$, $\mathcal{Q}=\RR^d\times\RR^{d-1}$, and $\mathcal{Q}_+=\RR^d_+\times\RR^{d-1}$.

The paper is organized as follows. We begin with the study of the sRTE and its Fokker--Planck approximation. Well-posedness results are proven in Section \ref{sec:sRTE_fFP}, as well a convergence result in the 1-Wasserstein framework, along the lines of \cite{AS} for the $L^2$-case. In Section \ref{sec:fFpb}, we show existence and uniqueness of solution to the fractional Fermi pencil-beam equation, and prove some integrability properties of their solutions which will be of great importance in subsequent sections. We then move to the approximation analysis where we define in details the pencil-beam approximation and provide the proof of our main Theorem \ref{thm:narrow_beam}. This is the content of Section \ref{sec:Fpb_approx}. We conclude the article with constructions, based on the narrow beam case, for the superposition of pencil-beams and the proof of Theorems \ref{thm:cont_sup} and \ref{thm:disc_sup} in Section \ref{sec:superposition}.

\section{Stationary Radiative Transfer and fractional Fokker--Planck equations}\label{sec:sRTE_fFP}
\subsection{Well-posedness of sRTE}\label{subsec:sRTE}
The existence and uniqueness of global solutions in $L^2$ is (easily) deduced from the well-posedness in bounded domains \cite{DL}. It follows by an approximation argument after confining equation \eqref{sRTE} to a bounded region $\Omega\times\SS^{n-1}$ with smooth boundary, and solving the null Cauchy data problem there. Defining
$$W^2_{\Omega}:=\{u\in L^2(\Omega\times\SS^{d-1}\} \;:\; \theta\cdot\nabla_x u\in L^2(\Omega\times\SS^{d-1}))\},$$
and similarly $W^2$ when $\Omega=\RR^d$ in the previous definition, 
the standard argument states that the solutions $u_{\Omega}\in W^2_\Omega$ converge to the desired global solution $u$ of \eqref{sRTE} as $\Omega\to \RR^d$, and moreover $u\in W^2$. A proof of this can be found in Appendix \ref{appdx:1}.
\begin{theorem}\label{thm:wellposedness_sRTE}
For $f\in L^2(Q)$, there exists a unique solution $u \in W^2$ to the stationary RTE \eqref{sRTE} for an integrable symmetric kernel. Furthermore, if $f\geq 0$ then $u\geq 0$.
\end{theorem}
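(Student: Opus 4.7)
The plan is to implement the approximation strategy outlined in the statement: solve the problem on an exhausting sequence of bounded smooth domains $\Omega\subset\RR^d$ with zero inflow data, derive $L^2$-bounds uniform in $\Omega$ using the symmetry of $k$, and pass to the limit. On a given $\Omega$, I would work with the incoming/outgoing boundary sets $\Gamma_\pm = \{(x,\theta) \in \partial\Omega\times\SS^{d-1} : \pm\theta\cdot\nu(x) > 0\}$ and the space $W^2_\Omega$, imposing the trace $u_\Omega|_{\Gamma_-}=0$. Existence and uniqueness of $u_\Omega\in W^2_\Omega$ follow from a Lions-type argument applied to the bilinear form $a(u,v):=\int (\theta\cdot\nabla_x u + \lambda u - \cI(u))v\,dx\,d\theta$, whose coercivity
$$a(u,u) = \tfrac12\int_{\Gamma_+}(\theta\cdot\nu)u^2\,d\sigma\,d\theta + \int\lambda u^2\,dx\,d\theta - \int\cI(u)u\,dx\,d\theta \;\geq\; \lambda_0\|u\|_{L^2}^2$$
rests on two key facts used throughout: the boundary term is nonnegative, and the symmetry of $k$ together with Fubini yield the fundamental identity
$$-\int\cI(u)u\,dx\,d\theta \;=\; \tfrac12\int\!\!\int\!\!\int k(x,\theta,\theta')\bigl(u(x,\theta')-u(x,\theta)\bigr)^2 dx\,d\theta\,d\theta' \;\geq\; 0.$$

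Testing the equation against $u_\Omega$ itself gives $\lambda_0\|u_\Omega\|_{L^2}^2\le \|f\|_{L^2}\|u_\Omega\|_{L^2}$, hence $\|u_\Omega\|_{L^2} \leq \lambda_0^{-1}\|f\|_{L^2}$ uniformly in $\Omega$. Rewriting $\theta\cdot\nabla_x u_\Omega = -\lambda u_\Omega + \cI(u_\Omega) + f$ and using the $L^\infty$-bound on $\Sigma(x,\theta)=\int k\,d\theta'$ (which follows from integrability of the kernel) yields a matching uniform bound on $\theta\cdot\nabla_x u_\Omega$ in $L^2$. Extending $u_\Omega$ by zero outside $\Omega$ and letting $\Omega$ exhaust $\RR^d$ along an increasing sequence, weak-$L^2$ compactness produces a limit $u\in W^2$ satisfying \eqref{sRTE} on the whole space, verified against compactly-supported test functions that eventually sit inside $\Omega$. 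Uniqueness on $\RR^d$ follows from applying the same energy identity to the difference of two solutions with $f=0$, the boundary-at-infinity contribution being disposed of via truncation of the test function.

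For positivity when $f\ge 0$, I would test the equation against $u^-:=\max(-u,0)\in L^2$. Integration by parts gives $\int(\theta\cdot\nabla_x u)u^-\,dx\,d\theta = -\tfrac12\int\theta\cdot\nabla_x(u^-)^2\,dx\,d\theta = 0$ (after cutoff and passage to the limit), while the absorption term contributes $-\int\lambda(u^-)^2$. Decomposing $u=u^+-u^-$, so that $\cI(u)=\cI(u^+)-\cI(u^-)$, and using $u^+u^-\equiv 0$ pointwise together with the symmetrization trick leads to
$$\int\cI(u)u^-\,dx\,d\theta \;=\; \int\!\!\int k(x,\theta,\theta')u^+(\theta')u^-(\theta) \;+\; \tfrac12\int\!\!\int k(x,\theta,\theta')\bigl(u^-(\theta')-u^-(\theta)\bigr)^2 \;\geq\; 0.$$
The tested identity then reads $-\int\lambda(u^-)^2 = \int\cI(u)u^- + \int f\,u^- \geq 0$, which together with $\lambda\geq\lambda_0>0$ forces $u^-\equiv 0$, i.e., $u\geq 0$.

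I expect the main technical obstacle to lie not in any single estimate but in making the infinite-domain manipulations rigorous: justifying the vanishing of $\int\theta\cdot\nabla_x(u^-)^2\,dx\,d\theta$ on $\RR^d$, the disappearance of boundary contributions in the uniqueness estimate, and the interchange of limits when passing from $u_\Omega$ to $u$ all require approximating elements of $W^2$ by compactly-supported or mollified functions, and the trace theory underlying such approximations on the full space is more delicate than in the bounded case. Everything else reduces to the symmetry-based identity above and the classical bounded-domain transport theory.
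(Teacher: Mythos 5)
Your proposal follows the same skeleton as the paper's Appendix A argument: solve the zero-inflow problem on an exhausting sequence of bounded domains, extract uniform $L^2$ bounds on $u_\Omega$ and on $\theta\cdot\nabla_x u_\Omega$ (the latter by reading the derivative off the equation), pass to weak limits via Banach--Alaoglu, and identify the limit against compactly supported test functions. The two places where you genuinely diverge are as follows. First, you re-derive the bounded-domain well-posedness from scratch via the Lions coercivity argument and the symmetrization identity $-\int\cI(u)u=\tfrac12\iiint k(u(\theta')-u(\theta))^2\geq 0$, whereas the paper simply cites Dautray--Lions (Chapter XXI, Theorem 4 and Proposition 6); your route is self-contained but duplicates classical material. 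Second, and more substantively, you prove positivity by testing the whole-space equation against $u^-$, which forces you to justify the chain rule $\theta\cdot\nabla_x(u^-)^2=2u^-\theta\cdot\nabla_x u^-$ and the vanishing of $\int\theta\cdot\nabla_x(u^-)^2$ for a mere $W^2$ function on $\RR^d$ --- a renormalization-type property that is true but requires its own density/trace argument, which you correctly flag as the delicate point. The paper sidesteps this entirely: it takes $u_{\Omega_k}\geq 0$ from the bounded-domain theory and observes that non-negativity is preserved under the weak $L^2$ limit, which is softer and avoids the whole-space integration-by-parts issue for $u^-$. Both routes are valid; the paper's is shorter where yours is more explicit, and your positivity argument carries an extra technical burden that the paper's does not.
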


We need the following two properties of solutions to the sRTE with Henyey--Greenstein kernels. The fist one follows directly from the mass-conservation property: $\mathcal{I}_g[\cdot]$:
\beq\label{mass_conservation}
\int_{\SS^{d-1}} \mathcal{I}_g[u^g](x,\theta)d\theta = \int_{\SS^{d-1}}  \int_{\SS^{d-1}}\big(u^g(\theta')-u^g(\theta)\big)k_g(x,\theta,\theta')d\theta'd\theta=0.
\eeq

\begin{lemma}\label{lemma:L1_est}
For a nonnegative source $f\in L^1(Q)$, the solution to \eqref{sRTE}-\eqref{HG_kernels} satisfies the estimate $\|u^g\|_{L^1}\leq \lambda_0^{-1}\|f\|_{L^1}$ for all $g\in(0,1)$.
\end{lemma}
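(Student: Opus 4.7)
The plan is to integrate the sRTE against the constant test function $1$ on $\SS^{d-1}$ (and then on $\RR^d$), exploit the mass conservation \eqref{mass_conservation} to kill the collision term, and use absorption $\lambda\geq \lambda_0>0$ to control the $L^1$ norm from above by $\|f\|_{L^1}$. The only delicate point is making sense of the transport integration by parts in all of $\RR^d$; I would handle this by going back to the bounded-domain approximation used in Theorem \ref{thm:wellposedness_sRTE}.

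More concretely, I would first fix a smooth bounded $\Omega\subset\RR^d$ and consider $u^g_\Omega\in W^2_\Omega$, the solution with zero incoming boundary data, which is nonnegative since $f\geq 0$. Integrating the equation $\theta\cdot\nabla_x u^g_\Omega + \lambda u^g_\Omega = \mathcal{I}_g(u^g_\Omega)+f$ over $\Omega\times\SS^{d-1}$, the divergence theorem gives
\beqq
\int_{\partial\Omega\times\SS^{d-1}}(\theta\cdot\nu)u^g_\Omega\,dS_x\,d\theta + \int_\Omega\lambda(x)\rho^g_\Omega(x)\,dx = \int_{\Omega\times\SS^{d-1}}f\,dxd\theta,
\eeqq
where $\rho^g_\Omega(x)=\int_{\SS^{d-1}}u^g_\Omega(x,\theta)d\theta$ and I used \eqref{mass_conservation} to discard the integral of $\mathcal{I}_g(u^g_\Omega)$. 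Since $u^g_\Omega\geq 0$ vanishes on the incoming part of the boundary, the surface term is nonnegative, so
\beqq
\lambda_0\|u^g_\Omega\|_{L^1(\Omega\times\SS^{d-1})}\leq \int_\Omega\lambda\rho^g_\Omega\,dx\leq \|f\|_{L^1(Q)}.
\eeqq

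Finally I would pass to the limit $\Omega\to\RR^d$, using that $u^g_\Omega\to u^g$ in $W^2$ (as asserted by Theorem \ref{thm:wellposedness_sRTE} and discussed in Appendix \ref{appdx:1}), together with Fatou's lemma applied to the nonnegative $u^g_\Omega$ to extract the $L^1$ bound
\beqq
\|u^g\|_{L^1(Q)}\leq \liminf_{\Omega\to\RR^d}\|u^g_\Omega\|_{L^1(\Omega\times\SS^{d-1})}\leq \lambda_0^{-1}\|f\|_{L^1(Q)}.
\eeqq

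The main obstacle, or rather the only subtlety, is the justification of the boundary/integration-by-parts step and the passage to the limit: one needs a nonnegative exhausting family of approximate solutions so that the boundary flux has a favorable sign and Fatou can be applied. The sign of $u^g_\Omega$ is precisely what makes this work and is the reason I would not attempt the calculation directly on $\RR^d$, where $J^g(x)=\int\theta u^g(x,\theta)d\theta$ has no a priori decay. The uniformity in $g$ is automatic since only $\lambda_0$ enters the estimate.
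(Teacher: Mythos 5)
Your proof is correct and follows essentially the same route as the paper's: integrate the equation, use the mass-conservation identity \eqref{mass_conservation} to remove the collision term, and use $\lambda\geq\lambda_0$ together with $u^g\geq 0$. The paper simply performs the integration by parts directly on $\RR^d\times\SS^{d-1}$ and asserts the transport term vanishes, whereas you justify this via the bounded-domain exhaustion; that extra care is fine --- just note that since the convergence $u^g_\Omega\rightharpoonup u^g$ supplied by Theorem \ref{thm:wellposedness_sRTE} is weak in $L^2$, the inequality $\|u^g\|_{L^1}\leq\liminf\|u^g_\Omega\|_{L^1}$ is best obtained by testing against nonnegative compactly supported cutoffs and using $u^g\geq 0$, rather than by a literal application of Fatou (which would require a.e.\ convergence).
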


\begin{proof}
This follows by integrating equation \eqref{sRTE} and the fact that for $f\geq 0$, $u^g\geq 0$. Indeed, by integration by parts and \eqref{mass_conservation} we get
$$
\underbrace{\int_Q \theta\cdot\nabla_x u^g dxd\theta}_{=0} +\int_Q \lambda u^gdxd\theta = \underbrace{\int_Q\cI_g[u^g]dxd\theta}_{=0}+\int fdxd\theta.
$$
\end{proof}

\begin{lemma}\label{lemma:Linfty_est}
For 
$f\in L^\infty(Q)\cap L^2(Q)$ 
the solution to \eqref{sRTE}-\eqref{HG_kernels} satisfies the estimate $\|u^g\|_\infty\leq \lambda_0^{-1}\|f\|_\infty$ for all  $g\in(0,1)$.
\end{lemma}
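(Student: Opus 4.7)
The plan is to apply a comparison principle at the level of the bounded-domain approximations from Appendix \ref{appdx:1} and then pass to the limit. Since $u^g$ is obtained as the $L^2_\loc$ limit, as $\Omega\to\RR^d$, of solutions $u^g_\Omega \in W^2_\Omega$ to the sRTE on a smooth bounded $\Omega$ with zero inflow data, it suffices to establish the uniform bound $|u^g_\Omega| \leq M_f := \lambda_0^{-1}\|f\|_\infty$ on $\Omega\times\SS^{d-1}$ and take the limit.

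Next, I would introduce $w^\pm := M_f \pm u^g_\Omega$. Both $\theta\cdot\nabla_x$ and $\cI_g$ annihilate constants---$\cI_g$ because its kernel acts on the difference $u(\theta') - u(\theta)$---so each $w^\pm$ satisfies the sRTE
\[
\theta\cdot\nabla_x w^\pm + \lambda w^\pm - \cI_g(w^\pm) = \lambda M_f \pm f \geq \lambda_0 M_f - \|f\|_\infty = 0,
\]
with nonnegative inflow data $w^\pm|_{\Gamma_-(\Omega)} = M_f \geq 0$.

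The key step is a positivity principle for the bounded-domain sRTE with nonnegative source and nonnegative inflow data. Rewriting the equation in the form $(\theta\cdot\nabla_x + \lambda + \sigma_g) w = K_g^\ast w + h$, where $\sigma_g(x) = \int k_g\,d\theta'$ and $K_g^\ast w(x,\theta) = \int k_g(x,\theta,\theta') w(x,\theta')\,d\theta'$, I would run the successive approximation scheme $w^{(n+1)} = (\theta\cdot\nabla_x + \lambda + \sigma_g)^{-1}(K_g^\ast w^{(n)} + h)$, with inflow data $M_f$ and $w^{(0)} = 0$. The Duhamel representation of the transport inverse is manifestly positivity-preserving (integration of nonnegative quantities along characteristics against a positive exponential weight), $K_g^\ast$ preserves positivity because $k_g \geq 0$, and convergence of this iteration to $w^\pm$ is the same argument used in Appendix \ref{appdx:1} to construct $u^g_\Omega$. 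Hence $w^\pm \geq 0$, i.e., $|u^g_\Omega| \leq M_f$; passing to the limit $\Omega \to \RR^d$ concludes.

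The only point requiring care is the positivity principle with \emph{inhomogeneous} inflow data, but this is a verbatim adaptation of the positivity statement already contained in Theorem \ref{thm:wellposedness_sRTE}: the Duhamel formula for the transport inverse simply picks up an additional nonnegative contribution from the boundary term $M_f$, so both the base case $w^{(1)}\geq 0$ and the inductive step go through unchanged.
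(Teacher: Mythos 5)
Your argument is correct but takes a genuinely different route from the paper. The paper's proof (following \cite{HJJ}) is a duality argument by contradiction: supposing $u^g>M\lambda_0^{-1}+\alpha$ on a set $A$ of positive measure, it chooses a ball $B$ nearly contained in $A$, replaces $\chi_B/\text{meas}(B)$ by a smooth test source $h$, solves the \emph{backward} sRTE for $\varphi$ with $\|\varphi\|_{L^1}\leq\lambda_0^{-1}(1+\delta)$, and derives a contradiction from the duality identity $\int f\varphi=\int u^g h$ after splitting the right-hand side over $B\cap A$ and $B\setminus A$ (the latter piece controlled using $u^g\in L^2$). You instead run a comparison/maximum principle: since $\theta\cdot\nabla_x$ and $\cI_g$ both annihilate constants and $\lambda\geq\lambda_0$, the constant $M_f=\lambda_0^{-1}\|f\|_\infty$ is a supersolution, so $w^\pm=M_f\pm u^g_\Omega$ solve the bounded-domain sRTE with nonnegative source and nonnegative inflow, positivity yields $|u^g_\Omega|\leq M_f$, and this survives the weak-$L^2$ limit $\Omega\to\RR^d$ because the $L^\infty$-ball is weakly $L^2$-closed (Appendix~\ref{appdx:1} in fact gives weak-$L^2$ convergence, not $L^2_\loc$, but either suffices). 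Your route is more elementary, dispenses with the adjoint problem and the a priori $L^2$ bound on $u^g$, and makes the role of $\lambda\geq\lambda_0$ transparent. One point should be tightened: Appendix~\ref{appdx:1} does \emph{not} construct $u^g_\Omega$ by source iteration---it cites \cite[Ch.~XXI, Thm.~4]{DL} for existence and \cite[Ch.~XXI, Prop.~6]{DL} for positivity with \emph{zero} inflow data---so ``the same argument used in Appendix~\ref{appdx:1}'' is not accurate, and the positivity principle with nonnegative inhomogeneous inflow is a routine but separate extension that needs to be cited or proved. If you do wish to argue via the iteration, note also that its $L^2$ convergence is not automatic (the naive contraction factor is $\sup\sigma_g/(\lambda_0+\inf\sigma_g)$, which need not be $<1$); the clean repair is to observe that the iterates from $w^{(0)}=0$ are nonnegative and monotone increasing, and that integrating the iterated equation together with the mass-conservation identity \eqref{mass_conservation} gives an $L^1(\Omega\times\SS^{d-1})$ bound uniform in $n$, so they converge a.e.\ to the unique solution.
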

\begin{proof}
We follow \cite{HJJ}. 
Let $M=\|f\|_\infty$ and assume there exists $\alpha>0$ and a bounded set $A\subset\RR^n$ with positive measure such that (without loss of generality) $u(x,\theta)>M\lambda_0^{-1}+\alpha$ in $A$. 
For any small enough $\delta>0$ we can find a ball $B\subset Q$ such that
$$
\text{meas}(B\cap A)>(1-\delta)\text{meas}(B).
$$
This in particular implies the inequality $\text{meas}(B\backslash A)<\delta\cdot \text{meas}(B)$. 

For such ball we take a nonnegative function $h\in C^\infty_c(\bar{B})$ so that $h\lesssim \text{meas}(B)^{-1}$, and
$$\left|\int\Big(\frac{\chi_B}{\text{meas}(B)} - h\Big)dxd\theta\right| = \left|1-\int h dxd\theta\right|<\delta.$$
Let $\varphi\in W^2$, $\varphi\geq 0$, be the unique solution to the backward sRTE,
$$
-\theta\cdot\nabla_x\varphi + \lambda\varphi = \mathcal{I}_g[\varphi] + h,
$$
which according to the previous lemma it satisfies
$$
\int_Q\varphi dxd\theta\leq\lambda_0^{-1}\int_Q hdxd\theta<\lambda_0^{-1}(1+\delta).
$$
Its existence, uniqueness, and regularity follows similarly as in Theorem \ref{thm:wellposedness_sRTE} by means of \cite[Chapter XXI-Proposition 8]{DL}. 
Then,
$$
\int f\varphi dxd\theta\leq M\int_Q\varphi dxd\theta \leq M\lambda_0^{-1}(1+\delta).
$$
On the other hand,
\beq\label{eq1}
\int_Q f\varphi dxd\theta = \int_Q u^ghdxd\theta,
\eeq
where we split the integration into two integrals, one over $B\cap A$ and another one over $B\backslash A$. By our initial assumption,
$$
\begin{aligned}
\int_{B\cap A} u^g h dxd\theta&\geq (M\lambda_0^{-1}+\alpha)\left(\int_{B\cap A}\frac{\chi_B}{\text{meas}(B)}dxd\theta - \int_{B\cap A}\Big(\frac{\chi_B}{\text{meas}(B)}-h\Big)dxd\theta\right)\\
&\geq (M\lambda_0^{-1}+\alpha)\left(\frac{\text{meas}(B\cap A)}{\text{meas}(B)} - \delta\right) >(M\lambda_0^{-1}+\alpha)(1-2\delta),
\end{aligned}
$$ 
and also
$$
\begin{aligned}
\int_{B\backslash A} u^g h dxd\theta \leq \|u^g\|_{L^2}\|h\|_{L^2}
\leq \|u^g\|_{L^2}\|h\|_\infty^{1/2}\text{meas}(B\backslash A)^{1/2}\lesssim \|u^g\|_{L^2}\delta^{1/2}.
\end{aligned}
$$
We then obtain a contradiction by noticing that \eqref{eq1} and previous estimates imply that for some $C>0$, independent of $\delta$,
$$(M\lambda_0^{-1}+\alpha)(1-2\delta) - C\delta^{1/2}\leq M\lambda_0^{-1}(1+\delta),$$
and consequently
$$M\lambda_0^{-1}+\alpha \leq M\lambda_0^{-1} + C\delta^{1/2},$$
which cannot hold if $\delta$ is sufficiently small.
\end{proof}

\subsection{Well-posedness of fFPE}\label{subsec:fFP}
Let's introduce the bilinear form
\beqq
\mathcal{B}(u,\varphi) := \frac{1}{2}\int_{\SS^{d-1}}\int_{\SS^{d-1}}K(\theta',\theta)(u(\theta') - u(\theta))(\varphi(\theta') - \varphi(\theta))d\theta' d\theta,
\eeqq
for the kernel $K(\theta',\theta)$ as in \eqref{kernel_K}, and define its associated Hilbert space $H^s_{\mathcal{B}}$ given by
\beqq
H^s_{\mathcal{B}} := \{u\in L^{2}_\theta \;: \; \mathcal{B}(u,u)<+\infty\},\quad \text{for}\quad s\in(0,1),
\eeqq 
with inner product
\beqq
\langle u,\varphi\rangle_{H^s_{\mathcal{B}}} := \mathcal{B}(u,\varphi) + \int_{\SS^{d-1}}u\varphi d\theta.
\eeqq
It is worth mentioning how $H^s_{\mathcal{B}}$ and $H^s(\RR^{d-1})$ are related to each other when stereographic coordinates are considered. It turns out that 
passing to stereographic coordinates, $\|u\|^2_{H^s_{\mathcal{B}}}$ is 
equivalent (up to a multiplicative factor) to
\beqq
\int_{\RR^{d-1}}\int_{\RR^{d-1}}\frac{(u(v') - u(v))^2}{|v'-v|^{d-1+2s}\lvpr^{d-1-2s}\lvr^{d-1-2s}}dv' dv + \int_{\RR^{d-1}}\frac{|u(v)|^2}{\lvr^{2(d-1)}}dv,
\eeqq
which yields the inclusion $H^s(\RR^{d-1})\subset \left[ H^s_\mathcal{B}\right]_{\mathcal{S}}$. 
Moreover, one can also verify that
\beqq
\begin{aligned}
&\int_{\RR^{d-1}}\int_{\RR^{d-1}}\frac{(u(v') - u(v))^2}{|v'-v|^{d-1+2s}\lvpr^{d-1-2s}\lvr^{d-1-2s}}dv'dv \sim \|(-\Delta_v)^{s/2}w_{\mathcal{S}}\|^2_{L^2(\RR^{d-1})}+\|u\|^2_{L^2(\SS^{d-1})},
\end{aligned}
\eeqq 
which implies the following equivalence of norms 
\beq\label{equiv_norms}
 \|u\|_{H^s_\mathcal{B}}\sim \|(-\Delta_v)^{s/2}w_{\mathcal{S}}\|_{L^2(\RR^{d-1})}+\|u\|_{L^2(\SS^{d-1})} \sim \|(-\Delta_\theta)^{s/2}u\|.
\eeq

Let us consider now the Hilbert space $\mathcal{H}^s_\epsilon := L^2_x(\RR^d;H^s_\mathcal{B})$, equipped with the norm
$$\|f\|^2_{\mathcal{H}^s_\epsilon }:= \|f\|_{L^2_{x,\theta}}^2 + \epsilon^{2s}\int_{\RR^d}\|f\|^2_{H^s_\mathcal{B}}dx,$$
where we use the shorthand notation $L^2_{x,\theta}$ for $L^2(\RR^d\times\SS^{d-1})$. 
Its dual will be denoted by $(\mathcal{H}^s_\epsilon)'$. Denoting the transport operator $T = \theta\cdot \nabla_x$, we define the solution space for the fractional Fokker--Planck equation as
$$\mathcal{Y}^s_\epsilon := \{f\in \mathcal{H}^s_\epsilon\;:\; Tf\in(\mathcal{H}^s_\epsilon)'\},\quad\text{with norm}\quad \|f\|_{\mathcal{Y}^s_\epsilon}:= \|f\|_{\mathcal{H}^s_\epsilon} + \|Tf\|_{(\mathcal{H}^s_\epsilon)'}.$$

\begin{definition}\label{def:weak_sol}
We say that a function $u\in\mathcal{Y}^s_\epsilon$ is a weak solution of \eqref{fFP_diff} if for all $\varphi\in C^\infty_c(\RR^d\times\SS^{d-1})$, it satisfies
\begin{equation}\label{weak_sol}
\begin{aligned}
&\int  -u(\theta \cdot\nabla_x\varphi) + \lambda u\varphi dxd\theta +\epsilon^{2s}\int\sigma(x)\mathcal{B}(u,\varphi)dx = \int f\varphi dxd\theta.
\end{aligned}
\end{equation}
\end{definition}

The well-posedness and some properties of the solutions to the fractional Fokker--Planck equation are summarized next. Additional properties are stated in Theorem \ref{thm:wellposedness_FP2} after we obtain a convergence result for sRTE-solutions in the highly forward-peaked limit.

\begin{theorem}\label{thm:wellposedness_FP}
Let $\sigma,\lambda$ satisfying \eqref{cond_coeff}. For any $f\in L^2_{x,\theta}$ there exists a unique solution $u\in\mathcal{Y}^s_\epsilon$ to \eqref{fFP_diff}. Moreover,
there is $C>0$ such that $\|u\|_{\mathcal{Y}^s_\epsilon}\leq C\|f\|_{L^2_{x,\theta}}$ for all $f\in L^2_{x,\theta}$.
\end{theorem}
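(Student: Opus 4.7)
The natural strategy is to apply J.-L.~Lions' projection theorem for non-symmetric bilinear forms, taking as test space $\Phi := C^\infty_c(\RR^d\times\SS^{d-1})$ equipped with the $\mathcal{H}^s_\epsilon$-norm, and as trial space the Hilbert space $F := \mathcal{H}^s_\epsilon$. Reading off \eqref{weak_sol}, I set
\[
a(u,\varphi) := \int_Q -u\,(\theta\cdot\nabla_x\varphi) + \lambda\, u\varphi\,dxd\theta + \epsilon^{2s}\int_{\RR^d}\sigma(x)\,\mathcal{B}(u,\varphi)\,dx,\qquad L(\varphi):=\int_Q f\varphi\,dxd\theta.
\]
For each fixed $\varphi\in\Phi$, continuity $|a(u,\varphi)|\lesssim C(\varphi)\|u\|_{\mathcal{H}^s_\epsilon}$ is immediate from the compact support and smoothness of $\varphi$, together with Cauchy--Schwarz for $\mathcal{B}$ using the equivalence \eqref{equiv_norms}. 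Similarly $L$ is continuous on $\Phi$ since $f\in L^2_{x,\theta}\hookrightarrow (\mathcal{H}^s_\epsilon)'$.

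The key hypothesis to verify is coercivity on $\Phi$. For $\varphi\in\Phi$, integration by parts (no boundary on $\RR^d$) yields
\[
\int_Q -\varphi\,(\theta\cdot\nabla_x\varphi)\,dxd\theta = -\tfrac{1}{2}\int_Q \theta\cdot\nabla_x(\varphi^2)\,dxd\theta = 0,
\]
so the antisymmetric transport term drops out and, using the coefficient bounds in \eqref{cond_coeff} and the positivity of $\mathcal{B}(\varphi,\varphi)$,
\[
a(\varphi,\varphi) \geq \lambda_0\|\varphi\|_{L^2_{x,\theta}}^2 + \sigma_0\,\epsilon^{2s}\int_{\RR^d}\mathcal{B}(\varphi,\varphi)\,dx \geq c_0\|\varphi\|_{\mathcal{H}^s_\epsilon}^2,
\]
where $c_0 = c_0(\lambda_0,\sigma_0)>0$ (for $\epsilon$ bounded). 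Lions' theorem then produces $u\in\mathcal{H}^s_\epsilon$ satisfying \eqref{weak_sol} for every $\varphi\in\Phi$.

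Next I recover $Tu\in(\mathcal{H}^s_\epsilon)'$ from the equation itself: interpreting \eqref{weak_sol} as an identity in $\mathcal{D}'$, one has $Tu = f-\lambda u - \epsilon^{2s}\sigma\bigl((-\Delta_\theta)^s u - cu\bigr)$. Since $(-\Delta_\theta)^s:H^s_\mathcal{B}\to (H^s_\mathcal{B})'$ is bounded by the very definition of $\mathcal{B}$ and \eqref{equiv_norms}, while $\lambda u$ and $cu$ lie in $L^2_{x,\theta}\subset (\mathcal{H}^s_\epsilon)'$, we obtain $\|Tu\|_{(\mathcal{H}^s_\epsilon)'}\lesssim \|f\|_{L^2_{x,\theta}} + \|u\|_{\mathcal{H}^s_\epsilon}$, hence $u\in\mathcal{Y}^s_\epsilon$.

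The hard part is uniqueness and the quantitative bound, which both require substituting $\varphi=u$ in \eqref{weak_sol}---a step not directly licensed since $u$ is only in $\mathcal{H}^s_\epsilon$, not in $\Phi$. I handle this by a mollification/truncation argument: convolve $u$ with a smooth kernel in the $x$-variable and multiply by a smooth cutoff $\chi_R(x)$ supported in $|x|\leq 2R$. Each mollified/truncated $u_{\eta,R}$ belongs to $\Phi$, converges to $u$ in $\mathcal{H}^s_\epsilon$, and $T u_{\eta,R}$ converges to $Tu$ in $(\mathcal{H}^s_\epsilon)'$ (the usual Friedrichs commutator estimate together with the control $Tu\in(\mathcal{H}^s_\epsilon)'$ handles the mollifier; the cutoff contributes a remainder controlled by $R^{-1}\|u\|_{L^2_{x,\theta}}$ that vanishes as $R\to\infty$). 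Passing to the limit gives the pairing $\langle Tu,u\rangle + \int\lambda u^2 + \epsilon^{2s}\int\sigma\mathcal{B}(u,u) = \int fu$, with $\langle Tu,u\rangle = 0$ by the same antisymmetry computation now justified in the appropriate duality. Coercivity then yields $\|u\|_{\mathcal{H}^s_\epsilon}\leq c_0^{-1}\|f\|_{L^2_{x,\theta}}$, uniqueness, and (combined with the formula for $Tu$) the full bound $\|u\|_{\mathcal{Y}^s_\epsilon}\lesssim \|f\|_{L^2_{x,\theta}}$. The main technical obstacle throughout is this density step, which is delicate because the transport operator is only controlled in the weak dual norm, forcing a careful commutator argument rather than naive approximation.
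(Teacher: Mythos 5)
Your proposal follows essentially the same approach as the paper: Lions' projection theorem on the pre-Hilbert test space $(C^\infty_c,\|\cdot\|_{\mathcal{H}^s_\epsilon})$, coercivity from integration by parts on the transport term plus the lower bounds in \eqref{cond_coeff}, recovery of $Tu\in(\mathcal{H}^s_\epsilon)'$ from the equation, and a density argument licensing the substitution $\varphi=u$ so that $\langle Tu,u\rangle=0$ yields the energy estimate and uniqueness. The one difference is cosmetic: where you sketch the mollification/truncation and Friedrichs-commutator argument for density of $C^\infty_c$ in $\mathcal{Y}^s_\epsilon$, the paper simply cites \cite{Ba} (noting also \cite{AM,BS}) for the integration-by-parts formula \eqref{ibp_formula}, which is precisely the output of that density step.
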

\begin{proof}
\noindent 1. Following the definition of weak solution, we introduce the bilinear form $a:\mathcal{H}^s_\epsilon\times C^\infty_c\to \RR$ so that the left hand side of \eqref{weak_sol} is given by $a(u,\varphi)$, while the right hand defines the linear operator $L(\varphi)$ mapping $L:C^\infty_c\to \RR$. The bilinear form becomes a bounded linear operator in $\mathcal{H}^s_\epsilon$ when freezing the second component $\varphi\in C^\infty_c$, and moreover, using integration by parts we get
$$
a(\varphi,\varphi)\geq \min\{\lambda_0,\sigma_0\}\|\varphi\|_{\mathcal{H}^s_\epsilon}^2,\quad\forall \varphi \in C^\infty_c.
$$
Then, applying \cite[Theorem 2.4]{BP} (see also \cite{L}) for the pre-Hilbert space $\mathcal{F} = (C^\infty_c, \|\cdot\|_{\mathcal{H}^s_\epsilon})$ we obtain the existence of a weak solution $u\in \mathcal{H}^s_\epsilon$, which in addition satisfies $u\in \mathcal{Y}^s_\epsilon$, since \eqref{weak_sol} allows to define $Tu$ in the sense of distributions, and $T$ is a bounded linear operator in $\mathcal{H}^s_\epsilon$.

It is not hard to verify that the set $C^\infty_c(\RR^d\times\SS^{d-1})$ is dense in $\mathcal{Y}^s_\epsilon$, and this allows us to obtain the integration by parts formula
\beq\label{ibp_formula}
\langle u_1,Tu_2\rangle_{\mathcal{H}^s_\epsilon,(\mathcal{H}^s_\epsilon)'} = -\langle u_2,Tu_1\rangle_{\mathcal{H}^s_\epsilon,(\mathcal{H}^s_\epsilon)'}, \quad\forall u_1,u_2\in \mathcal{Y}^s_\epsilon,
\eeq
In particular, $\langle u,Tu\rangle_{\mathcal{H}^s_\epsilon,(\mathcal{H}^s_\epsilon)'} = 0$. 
The identity \eqref{ibp_formula} is obtained as in \cite{Ba}. More recent proofs of similar density results are presented in \cite{AM,BS}. The density used in this paper is easier to establish as we do not deal with boundaries.

It follows from the weak formulation of the fractional Fokker--Planck equation that
\begin{equation*}
\langle u,Tu\rangle_{\mathcal{H}^s_\epsilon,(\mathcal{H}^s_\epsilon)'} + \int \lambda |u|^2 dxd\theta +\epsilon^{2s}\int\sigma(x)\mathcal{B}(u,u)dx = \int fudxd\theta,
\end{equation*}
and consequently,
$$
\min\{\lambda_0,\sigma_0\}\|u\|_{\mathcal{H}^s_\epsilon}^2\leq \|f\|_{L^2_{x,\theta}}\|u\|_{L^2_{x,\theta}} \leq  \frac{1}{2}\delta^{-2}\|f\|^2_{L^2_{x,\theta}}+\frac{1}{2}\delta^{2}\|u\|^2_{L^2_{x,\theta}},
$$
where we choose $\delta>0$ small enough so that we can absorb the last term with the left hand side.

Uniqueness of solutions follows directly from the continuous dependence estimate.
\end{proof}

\subsection{Convergence of sRTE and some properties of fFPE} 
In what follows, we take the kernel $k_g(x,\theta,\theta')$ to be in the narrow beam scaling, thus, we assume it satisfies condition $ii$. Then,
\beq\label{HGk}
k_g(x,\theta,\theta') := \frac{\epsilon^{2s}(1+g)^m b(x)}{\big(1+g^2-2g\theta\cdot\theta'\big)^{\frac{d-1}{2}+s}},
\eeq
whose limit as  $g\to 1$ is given by \eqref{limiting_kernel}

We next extend the weak-$L^2$ convergence result in \cite{AS} to the topology induced by the Wasserstein distance.
\begin{theorem}\label{thm:approx_fFP}
For any open bounded $\Omega\subset\RR^d$ and a nonnegative and integrable function $f$, there exists an increasing sequence $\{g_k\}_k\subset (0,1)$ converging to 1, and $\kappa_0>0$ depending on $\|\lambda\|_{C^1}$ and $\|f\|_{L^1}$, such that
\[
\mathcal{W}^1_{\bar{\Omega},\kappa}(u^{g_k},u)\to 0\quad\text{as}\quad k\to\infty,
\]
for all $\kappa\geq\kappa_0$, and where $u^{g_k}$ are the solutions to \eqref{sRTE} with kernel \eqref{HGk} and parameter $g=g_k$, and $u$ is the solution to the fractional Fokker--Planck equation \eqref{fFP_diff}.
\end{theorem}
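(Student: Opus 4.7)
I plan to reduce the theorem to the weak-$L^2$ convergence result of \cite{AS} through an approximation of the $L^1$ source by $L^2$ sources, combined with a compactness argument on the test class $BL_{1,\kappa}$ to upgrade weak-$L^2$ convergence to Wasserstein convergence on the compact region $\bar\Omega\times\SS^{d-1}$.

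First I approximate the nonnegative $L^1$ source $f$ by a sequence $f_n\in L^1\cap L^2$, $f_n\geq 0$, with $f_n\to f$ in $L^1$ (for instance $f_n=\min\{f,n\}\chi_{\{|x|\leq n\}}$). Write $u^g_n$ and $u_n$ for the solutions of \eqref{sRTE} and \eqref{fFP_diff} with source $f_n$, which are well-posed by Theorems \ref{thm:wellposedness_sRTE} and \ref{thm:wellposedness_FP}. By linearity and Lemma \ref{lemma:L1_est} (which uses only the symmetry of the scattering kernel),
$$
\|u^g-u^g_n\|_{L^1(Q)}\leq\lambda_0^{-1}\|f-f_n\|_{L^1},
$$
uniformly in $g\in(0,1)$. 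An analogous $L^1$-stability bound $\|u-u_n\|_{L^1}\leq\lambda_0^{-1}\|f-f_n\|_{L^1}$ for the fractional Fokker--Planck equation follows from the same integration-in-$\theta$ argument, since the symmetric kernel $K$ in \eqref{kernel_K} makes the diffusion term vanish after integration, and $u$ itself is characterized as the $L^1$-limit of $u_n$. Since $\|\psi\|_\infty\leq 1$ for every $\psi\in BL_{1,\kappa}$, these bounds translate into
$$
\mathcal{W}^1_{\bar\Omega,\kappa}(u^g,u^g_n)+\mathcal{W}^1_{\bar\Omega,\kappa}(u,u_n)\leq 2\lambda_0^{-1}\|f-f_n\|_{L^1},\quad\forall g\in(0,1),\;\kappa\geq 1.
$$

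For the main step, fix $n$ and apply the weak-$L^2$ convergence result of \cite{AS}: there is an increasing sequence $g_k\to 1$ along which $u^{g_k}_n\rightharpoonup u_n$ in $L^2(Q)$. A diagonal extraction over $n$ produces a single sequence that works simultaneously for every $n$. Now the test class $\mathcal{F}:=\{\psi|_{\bar\Omega\times\SS^{d-1}}:\psi\in BL_{1,\kappa}\}$ is uniformly bounded by $1$ and uniformly Lipschitz with constant $\kappa$, hence relatively compact in $C(\bar\Omega\times\SS^{d-1})$ by Arzel\`a--Ascoli. Given $\delta>0$, choose a finite $\delta$-net $\{\psi^{(1)},\dots,\psi^{(N)}\}$ in $\mathcal{F}$. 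For any $\psi\in\mathcal{F}$ there is $i$ with $\|\psi-\psi^{(i)}\|_\infty<\delta$, whence
$$
\Big|\int_{\bar\Omega\times\SS^{d-1}}\psi(u^{g_k}_n-u_n)\Big|\leq\Big|\int\psi^{(i)}\chi_{\bar\Omega}(u^{g_k}_n-u_n)\Big|+2\delta\lambda_0^{-1}\|f_n\|_{L^1}.
$$
Each of the $N$ integrals tends to $0$ as $k\to\infty$ by weak-$L^2$ convergence (since $\psi^{(i)}\chi_{\bar\Omega}\in L^2(Q)$); taking $\delta\downarrow 0$ gives $\mathcal{W}^1_{\bar\Omega,\kappa}(u^{g_k}_n,u_n)\to 0$ for each $n$.

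The triangle inequality then closes the argument: given $\varepsilon>0$, choose $n$ with $2\lambda_0^{-1}\|f-f_n\|_{L^1}<\varepsilon/2$ and $k_0$ with $\mathcal{W}^1_{\bar\Omega,\kappa}(u^{g_k}_n,u_n)<\varepsilon/2$ for $k\geq k_0$. The threshold $\kappa_0\geq 1$ enters via the definition of $BL_{1,\kappa}$; the stated dependence on $\|\lambda\|_{C^1}$ and $\|f\|_{L^1}$ presumably reflects the quantitative constants that appear if one replaces the abstract Arzel\`a--Ascoli step by a duality argument with the adjoint backward equations, using Lipschitz bounds on the dual solutions (inheritable from $\psi$ with constants involving $\|\lambda\|_{C^1}$). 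The principal technical obstacle is the $L^1$-stability for the fractional Fokker--Planck equation with merely $L^1$ data, since Theorem \ref{thm:wellposedness_FP} is formulated for $L^2$ sources; the symmetry of $K$ and the preservation of nonnegativity are what allow the mass-control argument of Lemma \ref{lemma:L1_est} to extend to the fractional case and to define $u$ unambiguously as the $L^1$-limit.
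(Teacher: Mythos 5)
Your proof takes a genuinely different route from the paper's. The paper packages the $u^{g_n}$ directly into Radon probability measures $\lambda u^{g_n}/\|f\|_{L^1}$ on $\bar\Omega\times\SS^{d-1}$ (using the mass-conservation identity of Lemma \ref{lemma:L1_est}, which exploits only that the kernels $k_{g_n}$ are integrable), extracts a weak-$*$ convergent subsequence by Banach--Alaoglu, and identifies the limit measure with $u\,dx\,d\theta$ by invoking the weak-$L^2$ convergence of \cite{AS}. You instead truncate the $L^1$ source to nonnegative $f_n\in L^1\cap L^2$, use an $L^1$-stability estimate uniformly in $g$ (and for the limit equation) to control the truncation error, apply the weak-$L^2$ convergence of \cite{AS} at the $L^2$ level, upgrade it to Wasserstein convergence on $\bar\Omega\times\SS^{d-1}$ by a finite $\delta$-net of $BL_{1,\kappa}$ from Arzel\`a--Ascoli, and close by a diagonal argument. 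Your finite-net step spells out explicitly the passage from weak-$L^2$ to Wasserstein convergence that the paper leaves implicit, and your $L^2$-approximation of the source is arguably needed to justify the paper's own identification step, which cites the $L^2$ estimate $\|u^g\|_{L^2}\lesssim\|f\|_{L^2}$ and thus implicitly requires $f\in L^2$.

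The gap you flag, however, is real and deserves a concrete resolution rather than a ``presumably.'' You need $\|u_n-u_m\|_{L^1}\leq\lambda_0^{-1}\|f_n-f_m\|_{L^1}$ for the fFPE; by linearity, the positive/negative decomposition of the source, and the nonnegativity of Theorem \ref{thm:wellposedness_FP2}, this reduces to the mass-conservation identity $\int\lambda u\,dx\,d\theta=\int f\,dx\,d\theta$ for a nonnegative weak solution. Lemma \ref{lemma:L1_est} proves this only for the integrable kernels $k_g$; for the singular limiting kernel $K(\theta,\theta')$ the usual antisymmetrization-plus-Fubini argument requires absolute convergence of $\iint K(\theta',\theta)\,|u(\theta')-u(\theta)|\,d\theta'\,d\theta$, which $u\in H^s_{\mathcal{B}}$ does \emph{not} provide (it controls the quadratic form, not the linear one). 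The cleanest fix is to test the weak formulation \eqref{weak_sol} against $x$-only cutoffs $\varphi(x,\theta)=\eta(x)\in C^\infty_c(\RR^d)$: since $\eta$ is independent of $\theta$, $\mathcal{B}(u,\eta)=0$ identically (the factor $\eta(\theta')-\eta(\theta)$ vanishes, so the singularity never enters), and letting $\eta\uparrow 1$ with $\nabla\eta\to 0$ kills the transport term by dominated convergence (using $u\in L^1$). This yields $\int\lambda u=\int f$ directly from the weak formulation, without ever splitting $\mathcal{I}_\theta(u)$ into a convergent integral. With this observation supplied, your argument closes; as written, the $L^1$-stability step is an acknowledged but unproved assertion.
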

\begin{proof}
Let $f$ be a nonnegative integrable function and consider an increasing sequence $\{g_n\}_{n\geq 1}\subset(0,1)$ so that $g_n\to 1$ as $n\to\infty$. We have that $u^{g_n}\geq 0$ for all $n$, and therefore, we can regard $\lambda u^{g_n}/\|f\|_{L^1}$ as a family of Radon probability measures since (as in Lemma \ref{lemma:L1_est})
$$
\int \lambda u^{g_n}dxd\theta = \|f\|_{L^1},\quad\forall n\geq 1.
$$
In the compact set $\bar{\Omega}\times\SS^{d-1}$, weak$^*$-compactness for measures (Banach-Alaoglu theorem) yields then the existence of a subsequence $\{u^{g_{n_k}}\}_k$, and another Radon probability measure $\mu$, for which
\begin{equation}\label{weak*_conv}
\frac{1}{\|f\|_{L^1}}\int_{\bar{\Omega}\times\SS^{d-1}} \varphi \lambda u^{g_{n_k}}dxd\theta \to \int_{\Omega\times\SS^{d-1}} \varphi \mu(x,\theta),\quad\forall \;\varphi\in C_b(\bar{\Omega}\times\SS^{d-1}),
\end{equation}
with $C_b(\bar{\Omega}\times\SS^{d-1})$ the set of bounded continuous functions in $\bar{\Omega}\times\SS^{d-1}$. Writing instead $\psi = \lambda\varphi/\|f\|_{L^1}$, and since $\lambda\in \text{Lip}(\RR^d\times\SS^{d-1})$, then we can find $\kappa_0>0$ such that for all $\kappa\geq \kappa_0$,
\beqq
\int_{\Omega\times\SS^{d-1}} \psi u^{g_{n_k}}dxd\theta \to \int_{\Omega\times\SS^{d-1}} \psi \big(\|f\|_{L^1}\lambda^{-1}\mu(x,\theta)\big),\quad\forall \;\psi\in BL_{1,\kappa}(\bar{\Omega}\times\SS^{d-1}),
\eeqq

Due to uniqueness of the distributional limit, one verifies that $\|f\|_{L^1}\lambda^{-1}\mu = u dxd\theta$, with $u$ solution to \eqref{sRTE} and limiting scattering cross-section \eqref{limiting_kernel} (i.e. the fractional Fokker--Planck equation \eqref{fFP_id}). This follows from similar computations as in \cite{AS}, leading to
$$
\|u^g\|_{L^2_{x,\theta}}+\|\theta\cdot\nabla_xu^g\|_{L^2_{x,\theta}} + \|\mathcal{I}_gu^g\|_{L^2_{x,\theta}}\leq C\|f\|_{L^2_{x,\theta}},
$$
uniformly in $g$, which subsequently imply weak-$L^2_{x,\theta}$ convergence to a limiting function $u$, i.e.
$$
\theta\cdot\nabla_xu^g + \lambda u^g - \mathcal{I}_g[u^g]\;\rightharpoonup\; \theta\cdot\nabla_xu + \lambda u - \mathcal{I}_\theta [u]\quad\text{as}\quad g\to 1.
$$
The previous then implies
\beqq
\mathcal{W}^{1}_{\bar{\Omega},\kappa}(u^{g_k},u)\to 0\quad\text{as}\quad k\to\infty.
\eeqq
\end{proof}

With the aid of the previous convergence theorem, some properties of the radiative transfer solution are inherited by the limiting function. This yields the following additional properties for the solution to the fractional Fokker--Planck equation.
\begin{theorem}\label{thm:wellposedness_FP2}
Let $\sigma,\lambda$ and $f$ as in Theorem \ref{thm:wellposedness_FP}. The unique solution $u$ to the fractional Fokker--Planck equation \eqref{fFP_diff} satisfies the following additional properties:
\begin{enumerate}
\item[1.] (non-negativity) if $f\geq 0$ then $u\geq 0$;
\item[2.] (continuous dependence for bounded sources) $\|u\|_\infty\leq\lambda_0^{-1}\|f\|_\infty$ for all $f\in L^\infty_{x,\theta}\cap L^2_{x,\theta}$.
\end{enumerate}
\end{theorem}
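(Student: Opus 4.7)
The plan is to derive both properties of $u$ by passing to the limit in the sRTE approximation from Theorem \ref{thm:approx_fFP}, exploiting the uniform bounds on the approximating solutions $u^{g_k}$ supplied by Theorem \ref{thm:wellposedness_sRTE} and Lemma \ref{lemma:Linfty_est}. I would fix a sequence $g_k\to 1$ and consider the solutions $u^{g_k}$ to the sRTE \eqref{sRTE} with Henyey--Greenstein kernel $k_{g_k}$ in the narrow-beam scaling of condition $ii$. As noted in the proof of Theorem \ref{thm:approx_fFP}, the uniform $L^2_{x,\theta}$ bound on $u^{g_k}$ (together with those on $\theta\cdot\nabla_x u^{g_k}$ and $\mathcal{I}_{g_k}u^{g_k}$) yields, along a subsequence, the weak-$L^2_{x,\theta}$ convergence $u^{g_k}\rightharpoonup u$ to the unique fFPE solution. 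Uniqueness of the limit (Theorem \ref{thm:wellposedness_FP}) will let us pass pointwise bounds from the prelimit to $u$ without subsequence ambiguity.

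For the nonnegativity statement, I would assume $f\geq 0$. By Theorem \ref{thm:wellposedness_sRTE}, $u^{g_k}\geq 0$ for every $k$, and the cone $\{w\in L^2_{x,\theta}:w\geq 0\}$ is convex and norm-closed, hence weakly closed. The weak-$L^2$ limit $u$ therefore satisfies $u\geq 0$ almost everywhere on $\RR^d\times\SS^{d-1}$.

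For the $L^\infty$ estimate, I would take $f\in L^\infty_{x,\theta}\cap L^2_{x,\theta}$. Lemma \ref{lemma:Linfty_est} supplies the uniform bound $\|u^{g_k}\|_\infty\leq\lambda_0^{-1}\|f\|_\infty$. For any measurable $E\subset\RR^d\times\SS^{d-1}$ of finite measure, $\chi_E\in L^2_{x,\theta}$, and the weak-$L^2$ convergence gives
\[
\left|\int_E u\,dxd\theta\right|=\lim_{k\to\infty}\left|\int_E u^{g_k}\,dxd\theta\right|\leq \lambda_0^{-1}\|f\|_\infty\,|E|.
\]
Localizing in a ball and applying this to the finite-measure sets $E\cap\{u>t\}$ and $E\cap\{u<-t\}$, Lebesgue's differentiation theorem will then yield $|u(x,\theta)|\leq \lambda_0^{-1}\|f\|_\infty$ a.e., as desired.

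The main (minor) obstacle is simply to confirm that the weak-$L^2$ convergence and uniform $L^\infty$ bound can be invoked along a common subsequence; this is handled by a standard diagonal extraction together with the uniqueness of the fFPE solution, which identifies every subsequential limit as $u$. No deeper difficulty arises, since the approximating solutions satisfy all the required bounds uniformly in the parameter $g_k$ under the narrow-beam scaling.
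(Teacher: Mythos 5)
Your proposal is correct, and part 1 coincides with the paper's argument (non-negativity is inherited by the weak-$L^2$ limit of the non-negative sRTE solutions, the positive cone being convex and norm-closed, hence weakly closed). For part 2 you take a genuinely different, and in fact shorter, route. The paper does \emph{not} pass the uniform bound $\|u^{g}\|_\infty\leq\lambda_0^{-1}\|f\|_\infty$ of Lemma \ref{lemma:Linfty_est} through the limit; instead it re-runs the duality/contradiction argument of that lemma at the level of the fractional Fokker--Planck equation: it keeps the identity \eqref{eq1} with the backward sRTE test function $\varphi_g$, lets $g\to1$ so that $\varphi_g\rightharpoonup\varphi$ with $\varphi$ solving the backward fFPE (non-negative, with $L^1$-norm at most $\lambda_0^{-1}(1+\delta)$), obtains $\int f\varphi = \int uh$, and then repeats the remaining steps of Lemma \ref{lemma:Linfty_est} to reach a contradiction. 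Your argument instead observes that the constraint set $\{w\in L^2: |w|\leq C \text{ a.e.}\}$ is convex and closed, hence weakly closed, so the uniform $L^\infty$ bound on $u^{g_k}$ passes directly to the weak limit; your level-set computation with $\chi_E$ for finite-measure $E$ makes this explicit (and you do not even need Lebesgue differentiation: testing on $E=\{u>t\}\cap B$ already forces $|E|=0$ for $t>\lambda_0^{-1}\|f\|_\infty$). Both arguments rest on the same prerequisite, namely the identification of the weak-$L^2$ limit of $u^{g_k}$ with the unique fFPE solution from the proof of Theorem \ref{thm:approx_fFP} (which, as you note, only needs the uniform $L^2$ a priori bounds and not the non-negativity or integrability hypotheses of that theorem's statement). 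What the paper's longer route buys is a maximum principle proved by duality directly against the backward fFPE, which is the mechanism reused elsewhere; what yours buys is economy, since the uniform sRTE bound is already available.
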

\begin{proof}
\noindent 1. The non-negativity of $u$ for a source term $f\geq 0$ follows from the analogous property of the sRTE in Theorem \ref{thm:wellposedness_sRTE} and the convergence in Theorem \ref{thm:approx_fFP}.\\

\noindent 2. The continuity in the $L^\infty$-norm follows in a similar fashion as Lemma \ref{lemma:Linfty_est} for sRTE. We point out here the main differences in the argument. The exact same computations yield to \eqref{eq1} for a test function $\varphi_g$, solution to the backward sRTE. Instead of estimating directly the right hand side $\int_Qu^g hdxd\theta$, we take the limit as $g\to 1$ and obtain
$$
\int f\varphi dxd\theta = \int uh dxd\theta,
$$
where $\varphi$ is the weak-$L^2_{x,\theta}$ limit of the solutions $\varphi_g$, thus it solves a backward fractional Fokker-Planck equation. It is of course nonnegative and with $L^1_{x,\theta}$-norm bounded by $\lambda_0^{-1}(1+\delta)$. 
The proof is then complete by repeating the remaining steps in the proof of Lemma \ref{lemma:Linfty_est}.
\end{proof}

\subsection{Regularity of solutions to fFPR}
The regularity results of this section are based on a local representation of the fractional Fokker--Planck equation, in a neighborhood of an arbitrary $(x_0,\theta_0)\in\RR^d\times\SS^{d-1}$. The idea is to pseudo-localize the equation in the sense that we can approximate a solution to the Fokker--Planck equation with smooth and compactly supported functions, satisfying a Fokker--Planck equation with a smooth source that might not be compactly supported.

\begin{theorem}[Regularity of solutions]\quad
\begin{enumerate}
\item[1.](Sub-elliptic regularity) For $\sigma,\lambda$ and $f$ as in Theorem \ref{thm:wellposedness_FP}, the unique solution $u$ to the fractional Fokker--Planck equation is a strong solution, and more precisely
$(-\Delta_\theta)^{s}u,\;Tu \in L^2_{loc}(\RR^d;L^2(\SS^{d-1}))$; 
\item[2.] (Continuity) For $f\in L^\infty_{x,\theta}\cap L^2_{x,\theta}$, the solution $u$ to the fractional Fokker--Planck equation \eqref{fFP_diff} is locally H\"older continuous in $\RR^d\times\SS^{d-1}$, and consequently, it belongs to $C(\RR^d\times\SS^{d-1})$. 
\end{enumerate}
\end{theorem}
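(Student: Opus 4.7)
I would base the approach on the pseudo-localization device announced in the preamble. Fix $(x_0,\theta_0)\in\RR^d\times\SS^{d-1}$ and pick smooth cutoffs $\chi\in C^\infty_c(\RR^d)$ and $\eta\in C^\infty_c(\SS^{d-1}\setminus\{-\theta_0\})$ equal to $1$ near $x_0$ and $\theta_0$ respectively. Set $\zeta=\chi\eta$ and $\tilde u=\zeta u$. Because $(-\Delta_\theta)^s$ acts only in $\theta$ while $T$ commutes with multiplication by $\chi(x)$, a direct manipulation of \eqref{fFP_id} produces
$$
T\tilde u+\lambda\tilde u+\epsilon^{2s}\sigma\big((-\Delta_\theta)^s\tilde u-c\tilde u\big)\;=\;\zeta f+u\,T\zeta+\epsilon^{2s}\sigma\,\chi\,[(-\Delta_\theta)^s,\eta]\,u.
$$
The first two source terms are compactly supported and at worst in $L^2_{x,\theta}$. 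The commutator is non-local, but since $\eta$ is smooth and the angular kernel in \eqref{fFP_kernel} has the same near-diagonal singularity as the Euclidean fractional Laplacian, one has a standard Calder\'on-type commutator bound $\|[(-\Delta_\theta)^s,\eta]g\|_{L^p(\SS^{d-1})}\lesssim \|g\|_{L^p(\SS^{d-1})}$ for $p\in\{2,\infty\}$. Hence the right-hand side inherits whatever integrability $u$ already has; this is the localized equation on which both items rest.

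For item 1, the weak energy identity (pairing \eqref{fFP_diff} with $u\varphi^2$ for an $x$-cutoff $\varphi$) combined with \eqref{equiv_norms} already yields $(-\Delta_\theta)^{s/2}u\in L^2_{\mathrm{loc}}(\RR^d;L^2(\SS^{d-1}))$. Applied to $\tilde u$, the hypoelliptic estimates developed in \cite{AS,GPR1} for the operator $T+(-\Delta_\theta)^s$ upgrade this to $(-\Delta_\theta)^s\tilde u\in L^2_{x,\theta}$, after which the equation itself yields $T\tilde u\in L^2_{x,\theta}$. Since $\zeta\equiv 1$ on a neighborhood of $(x_0,\theta_0)$, these read locally as $(-\Delta_\theta)^s u,\,Tu\in L^2$, and the arbitrariness of $(x_0,\theta_0)$ finishes the item.

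For item 2, Theorem \ref{thm:wellposedness_FP2}(2) places $u\in L^\infty$, and the commutator bound then turns the pseudo-localized source into an $L^\infty$ function. I would push the problem through the stereographic chart $\mathcal{S}$ (legitimate because $\eta$ was chosen away from $-\theta_0$), flattening the sphere and arriving at a kinetic integro-differential equation of the form
$$
\partial_{x^d}w+v\cdot\nabla_{x'}w+b(x)\,w+a(x)(-\Delta_v)^s w = F\in L^\infty_{\mathrm{loc}},\qquad 0<a_0\leq a\leq a_0^{-1},
$$
on a relatively compact subset of $\RR^d\times\RR^{d-1}$. Local H\"older continuity of $L^\infty$ solutions to such equations is by now a classical theorem (Imbert--Silvestre and Chaker--Silvestre for the kinetic case, Stroock--Varopoulos in the non-kinetic case), providing $\alpha=\alpha(d,s,\lambda_0,\sigma_0)\in(0,1)$ and a local $C^\alpha$ estimate on $w$. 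Pulling back through $\mathcal{S}^{-1}$ and using arbitrariness of $(x_0,\theta_0)$ gives $u\in C^\alpha_{\mathrm{loc}}(\RR^d\times\SS^{d-1})\subset C(\RR^d\times\SS^{d-1})$.

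The principal obstacle is the non-local commutator $[(-\Delta_\theta)^s,\eta]u$: it lives outside the support of $\zeta$, which is precisely what prevents a purely interior argument, and one needs $L^2$- and $L^\infty$-commutator bounds on the sphere. Both follow from the explicit kernel structure \eqref{fFP_kernel} and the smoothness of $\eta$, but the accounting must be done carefully so that no regularity we do not yet control is assumed on $u$. A secondary difficulty is matching the hypotheses of the H\"older-regularity theorem one wishes to cite to the variable diffusion coefficient $\sigma(x)$; should no off-the-shelf statement suffice, one freezes $\sigma(x_0)$ and absorbs the $O(|x-x_0|)$ perturbation by a standard scaling/bootstrap argument.
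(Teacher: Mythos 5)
The overall architecture you propose — localize, flatten the sphere via stereographic coordinates, and invoke hypoelliptic/H\"older regularity for a fractional kinetic equation in Euclidean coordinates — is indeed the route the paper follows, and your citations to Imbert--Silvestre for step~2 are essentially those used in the paper. However, there is a genuine gap in the step you identify as the main obstacle, and it is not a matter of careful accounting: the claimed commutator estimate
$$
\bigl\|[(-\Delta_\theta)^s,\eta]\,g\bigr\|_{L^p(\SS^{d-1})}\;\lesssim\;\|g\|_{L^p(\SS^{d-1})},\qquad p\in\{2,\infty\},
$$
is \emph{false} for $s>1/2$. The commutator of a pseudodifferential operator of order $2s$ with a smooth multiplier is a pseudodifferential operator of order $2s-1$, and for $s>1/2$ this is strictly positive, so the operator is unbounded on $L^2$ and on $L^\infty$. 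Concretely, the kernel of $[(-\Delta_\theta)^s,\eta]$ near the diagonal behaves like $(\eta(\theta)-\eta(\theta'))\,|\theta-\theta'|^{-(d-1+2s)}\sim |\theta-\theta'|^{-(d-1)-(2s-1)}$, whose Schur integral diverges once $2s\geq 1$; the odd leading part survives as a Riesz-type transform of order $2s-1$ to which no $L^\infty\to L^\infty$ bound applies either. Since your reduction to a localized equation with source ``inheriting whatever integrability $u$ already has'' rests entirely on this bound, the subsequent applications of the hypoelliptic estimate and of the H\"older-regularity theorem are not justified for $s>1/2$ as written.

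The paper avoids this difficulty by a different handling of the pseudo-localization error. Rather than a single crude commutator bound, the error terms (the $f_2,f_3$ in \eqref{func_tf}) are split into a \emph{symmetric} second-difference piece, where $2\varphi(v)-\varphi(v+z)-\varphi(v-z)=O(|z|^2)$ tames the singularity for all $s\in(0,1)$, and an \emph{antisymmetric} piece that is paired with a genuine difference quotient of $u$ and estimated by a weighted Cauchy--Schwarz inequality against the coercivity norm $\|(-\Delta_\theta)^{s/2}u\|_{L^2}$ (available from the energy identity), not by $\|u\|_{L^2}$. For the $L^\infty$ statement, the same symmetrization is used together with the boundedness and compact support of $u$, again avoiding any claim that a commutator of positive order is bounded on $L^\infty$. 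If you wish to salvage your commutator approach, you would need to estimate $[(-\Delta_\theta)^s,\eta]u$ using the a priori $H^{s/2}$ control on $u$ (which closes the argument for $s\leq 2/3$) and a bootstrap or symmetrization for larger $s$; the paper's Appendix~\ref{appdx:loc_moll} and the estimates around \eqref{func_tf} are exactly where this work is done.
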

\begin{proof}
\noindent 1. 
The subelliptic character of fractional order kinetic equation is known to derive, for example, from commutator identities and energy estimates \cite{Bo,A}. In order to adapt these arguments to the setting of the fractional Laplacian on the unit sphere, we pseudo-localize the equation and consider suitable local coordinates that allow us to apply known results.

By a standard localization and mollification technique (see Appendix \ref{appdx:loc_moll}) we can assume without loss of generality that $u$ is a smooth and compactly supported solution to the fractional Fokker--Planck equation \eqref{fFP_diff}, or equivalently \eqref{fFP_id}, with support contained in $\RR^n\times\SS^{d-1}_+$, and for a right-hand side $f\in C^\infty(\RR^{d}\times\SS^{d-1})\cap L^2(\RR^{d}\times\SS^{d-1})$. By considering beam coordinates, we see that for $\theta(v),\theta'(v')\in \SS^{d-1}_+$,
$$
\left(1-\theta\cdot\theta'\right)_{\mathcal{B}} = \frac{\lvr\lvpr -v\cdot v' -1}{\lvr\lvpr}.
$$
Therefore, multiplying \eqref{fFP_id} by $\lvr^{-\frac{d}{2}}$ and setting $\tilde{u}(x,v) = \lvr^{-\frac{d}{2}-1}u(x,v)$, we obtain the kinetic equation
\beqq
\partial_{x^d}\tu+v\cdot\nabla_{x'}\tu = \int \tilde{k}(x,v,v')(\tu(x,v')-\tu(x,v))dv'+ \tilde{f}(x,v),
\eeqq
for $\tilde{f} = \sum_{j=0}^3f_j$ with
\beq\label{func_tf}
\begin{aligned}
&f_0(x,v) = \frac{f(x,v)}{\lvr^{\frac{d}{2}}},\quad  f_1(x,v)=- \lambda(x)\frac{u(x,v)}{\lvr^{\frac{d}{2}}} \\
&f_2(x,v) = \frac{u(x,v)}{\lvr^{\frac{d}{2}+1}}\int \tilde{k}(x,v,v')\left(1 - \frac{\lvr^{\frac{d+1}{2}}}{\lvpr^{\frac{d+1}{2}}}\right)dv',\\
&f_3(x,v) = \int \tilde{k}(x,v,v')\frac{u(x,v')}{\lvpr^{\frac{d+1}{2}}}\left(\frac{1}{\lvr^{\frac{1}{2}}} - \frac{1}{\lvpr^{\frac{1}{2}}}\right)dv',\\
\end{aligned}
\eeq
and for a kernel defined by
\beq\label{kernel_tk}
\tilde{k}(x,v',v) = 2^{m-\frac{d-1}{2}-s}\epsilon^{2s}\sigma(x)\frac{\lvr^{s}\lvpr^{s}}{h(v,v')^{\frac{d-1}{2}+s}},\quad\text{with}\quad h(v,v') = \lvr\lvpr -v\cdot v' -1.
\eeq
Notice that here, $f(x,v)$ stands for $[f|_{\SS^{d-1}_+}]_{\mathcal{B}}$ which is in $L^2(\RR^d\times\RR^{d-1}; \lvr^{-d}dxdv)$.

We now estimate the $L^2$-norm of $(-\Delta_\theta)^su$ in terms of the kernel $\tilde{k}(x,v,v')$. By comparing \eqref{fFP_diff} and \eqref{fFP_id}, we have
\beq\label{fLap_tk}
\begin{aligned}
\|(-\Delta_\theta)^su\|_{L^2_{x,\theta}} &= \|\mathcal{I}_\theta(u) - cu\|_{L^2_{x,\theta}} 
\lesssim \left(\int \left([\mathcal{I}_\theta(u)]_{\mathcal{B}} + c[u]_{\mathcal{B}}\right)^2\frac{dv}{\lvr^{d}}dx\right)^{1/2}\\
&\lesssim \left\| \int \tilde{k}(x,v,v')(\tilde{u}(v') - \tilde{u}(v))dv'\right\|_{L^2_{x,v}} +\left\| u\right\|_{L^2_{x,\theta}}.
\end{aligned}
\eeq
The last inequality follows, in particular, from the fact that $\tilde{f}\in L^2(\RR^d\times\RR^{d-1};dxdv)$, which we prove next. The latter spaces is denoted in this section by $L^2_{x,v}$. Indeed, we directly have that $\|f_0\|_{L^2_{x,v}}\lesssim \|f\|_{L^2_{x,\theta}}$ and $\|f_1\|_{L^2_{x,v}}\lesssim \|u\|_{L^2_{x,\theta}}$, and moreover,
\beqq
\begin{aligned}
f_2(x,v) &= \frac{u(x,v)}{2\lvr^{\frac{d}{2}+1}}\int \tilde{k}(x,v,v+z)\left(2-\frac{\lvr^{\frac{d+1}{2}}}{\langle v+z\rangle^{\frac{d+1}{2}}}-\frac{\lvr^{\frac{d+1}{2}}}{\langle v-z\rangle^{\frac{d+1}{2}}}\right)dz\\
&\quad + \frac{u(x,v)}{2\lvr^{\frac{d}{2}+1}}\int \big(\tilde{k}(x,v,v-z) - \tilde{k}(x,v,v+z)\big)\left(1-\frac{\lvr^{\frac{d+1}{2}}}{\langle v-z\rangle^{\frac{d+1}{2}}}\right)dz;\\
f_3(x,v) &= \int \tilde{k}(x,v,v+z)\left(\frac{u(v+z)}{\langle v+z\rangle^{\frac{d+1}{2}}} - \frac{u(v)}{\lvr^{\frac{d+1}{2}}}\right)\left(\frac{1}{\lvr^{\frac{1}{2}}}-\frac{1}{\langle v+z\rangle^{\frac{1}{2}}}\right)dz\\
&\quad + \frac{u(x,v)}{2\lvr^{\frac{d+1}{2}}}\int \tilde{k}(x,v,v+z)\left(\frac{2}{\lvr^{\frac{1}{2}}}-\frac{1}{\langle v+z\rangle^{\frac{1}{2}}}-\frac{1}{\langle v-z\rangle^{\frac{1}{2}}}\right)dz\\
&\quad + \frac{u(x,v)}{2\lvr^{\frac{d+1}{2}}}\int \big(\tilde{k}(x,v,v-z) - \tilde{k}(x,v,v+z)\big)\left(\frac{1}{\lvr^{\frac{1}{2}}}-\frac{1}{\langle v-z\rangle^{\frac{1}{2}}}\right)dz\\
\end{aligned}
\eeqq
We show in Appendix \ref{appdx:func_h} that for some $\beta\in(0,1)$,
\beqq
\frac{\beta}{2}|v-v'|^{2}\leq h(v,v')\leq \frac{1}{2}|v-v'|^{2},
\eeqq
which then implies the inequality (see \eqref{ineq_h})
\beqq
\big|\tilde{k}(x,v,v-z) - \tilde{k}(x,v,v+z)\big|\lesssim \frac{\lvr^{2s}}{|z|^{d-1+2s-1}}.
\eeqq
The above (and the compact support of $u$) allows us to estimate the $L^2$-norm of $f_2$ and the last two terms in $f_3$, by a constant times $\|u\|_{L^2_{x,\theta}}$. For the remaining term in $f_3$, we notice that
\beqq
\begin{aligned}
&\left\| \int \tilde{k}(x,v,v+z)\left(\frac{u(v+z)}{\langle v+z\rangle^{\frac{d+1}{2}}} - \frac{u(v)}{\lvr^{\frac{d+1}{2}}}\right)\left(\frac{1}{\lvr^{\frac{1}{2}}}-\frac{1}{\langle v+z\rangle^{\frac{1}{2}}}\right)dz\right\|^2_{L^2_{x,v}}\\
&\lesssim \int \int \tilde{k}(x,v,v+z)\left(\frac{u(v+z)}{\langle v+z\rangle^{\frac{d+1}{2}}} - \frac{u(v)}{\lvr^{\frac{d+1}{2}}}\right)^2dzdv\\
&\lesssim \int[\mathcal{I}_\theta(u)]_{\mathcal{B}}udxd\theta + \|u\|_{L^2_{x,\theta}}^2\\
&\lesssim \|(-\Delta_\theta)^{s/2}u\|^2_{L^2_{x,\theta}} + \|u\|_{L^2_{x,\theta}}^2.
\end{aligned}
\eeqq
Summarizing, we have shown that
\beqq
\|\tilde{f}\|_{L^2_{x,v}}\lesssim \|f\|_{L^2_{x,\theta}}+\|u\|_{L^2_{x,\theta}}+\|(-\Delta_\theta)^{s/2}u\|_{L^2_{x,\theta}}.
\eeqq
It then follows from the above and results from \cite{A} (see Appendix \ref{appdx:subellipticity}) that
\beqq
\|(-\Delta_v)^s\tu\|_{L^2}+\|(-\Delta_x)^{\frac{s}{1+2s}}\tu\|_{L^2}\lesssim \|f\|_{L^2} + \|\tu\|_{L^2} + \|(-\Delta_v)^{s/2}\tu\|_{L^2},
\eeqq
and subsequently (see Remark \ref{remark_D}),
\beqq
\left\| \int \tilde{k}(x,v,v')(\tu(x,v')-\tu(x,v))dv'\right\|_{L^2_{x,v}}\leq \|f\|_{L^2} + \|\tu\|_{L^2} + \|(-\Delta_v)^{s/2}\tu\|_{L^2_{x,v}}.
\eeqq
Combining the above, \eqref{fLap_tk}, and the estimates $\|\tu\|_{L^2_{x,v}}\leq \|u\|_{L^2_{x,\theta}}$ and
\beqq
\|(-\Delta_v)^{s/2}\tu\|^2_{L^2_{x,v}}\lesssim \int \int \tilde{k}(x,v,v+z)
\left(\tilde{u}(v+z)-\tilde{u}(v)\right)^2dzdv\lesssim  \|(-\Delta_\theta)^{s/2}u\|^2_{L^2_{x,\theta}} + \|u\|_{L^2_{x,\theta}}^2,
\eeqq
we finally deduce that
\beqq
\|(-\Delta_\theta)^su\|^2_{L^2_{x,\theta}}\lesssim \|f\|_{L^2_{x,\theta}} + \|u\|_{L^2_{x,\theta}} + \|(-\Delta_\theta)^{s/2}u\|^2_{L^2_{x,\theta}}.
\eeqq 
Using equation \eqref{fFP_diff} one also has that $Tu\in L^2(\RR^d\times \SS^{d-1})$ with and analogous estimate. 

Lastly, given any bounded set $\Omega\subset\RR^d$  and a partition of unity defined on $\RR^d\times\SS^{d-1}$, the above yields that $(-\Delta_\theta)^s u$ and $Tu$ belong to  $L^2(\Omega\times\SS^{d-1})$ for any solution to the fractional Fokker--Planck equation \eqref{fFP_diff} with source $f\in L^2(\RR^d\times\SS^{d-1})$.\\

2. Similarly, the continuity of solutions is a consequence of the continuity property satisfied by solutions to fractional order kinetic equations, and more generally satisfied by solutions to some linear transport equations with singular scattering kernels. We again localize the solution in order to use beam coordinates on the sphere. 

We assume without loss of generality that $u(\cdot,\cdot)$ is a compactly supported solution to the fractional Fokker--Planck equation \eqref{fFP_diff}, with support in $\mathcal{V}\times\SS^{d-1}_+$, for some open bounded set $\mathcal{V}\subset\RR^d$, and for a source term $f\in L^2_{x,\theta}\cap L^\infty_{x,\theta}$. By the previous regularity result, it is indeed a strong solution.

By considering beam coordinates, we saw above that $\tilde{u}(x,v) = \lvr^{-\frac{d}{2}-1}u(x,v)$ solved
\beqq
\partial_{x^d}\tu+v\cdot\nabla_{x'}\tu = \int \tilde{k}(x,v,v')(\tu(x,v')-\tu(x,v))dv'+ \tilde{f}(x,v)
\eeqq
with $\tilde{f}$ and $\tilde{k}$ defined respectively in \eqref{func_tf} and \eqref{kernel_tk}. It turns out that the kernel (see Appendix \ref{appdx:func_h}) satisfies the point-wise inequalities,
\beqq
\frac{1}{|v-v'|^{d-1+2s}}\lesssim\tilde{k}(x,v,v')\lesssim \frac{\lvr^{s}\lvpr^{s}}{|v-v'|^{d-1+2s}},
\eeqq
which directly implies coercivity of the integral operator associated to $\tilde{k}$.  Namely, for every $R\geq1$, there is $C>0$ so that for all $\varphi:\RR^{d-1}\to \RR$ compactly supported inside the ball $B_{R}$, 
\beqq
\begin{aligned}
&C^{-1}\int_{\RR^{d-1}}\int_{\RR^{d-1}} \frac{|\varphi(v) -\varphi(v')|^2}{|v-v'|^{d-1+2s}}dvdv' \\
&\leq - \int_{\RR^{d-1}}\left(\int_{\RR^{d-1}}\tilde{k}(x,v,v')(\varphi(v')-\varphi(v))dv'\right)\varphi(v)dv + \|\varphi\|^2_{L^2(\RR^{d-1})}.
\end{aligned}
\eeqq
The same inequality helps us to deduce the non-degeneracy condition (in the case $s<1/2$):
\beqq
\inf_{|e|=1}\int_{B_r(v)}\max((v'-v)\cdot e,0)^2\tilde{k}(x,v,v')dv'\geq c r^{2-2s},\quad\forall v\in B_R.
\eeqq
Finally, we also verify that
\beqq
\begin{aligned}
&\int_{\RR^{d-1}\backslash B_r(v)}\tilde{k}(x,v,v')dv' \lesssim r^{-2s},\quad \forall r>0 \text{ and }v\in B_R;\\
&\int_{B_R\backslash B_r(v')}\tilde{k}(x,v,v')dv \lesssim r^{-2s},\quad \forall r>0 \text{ and }v'\in B_R.
\end{aligned}
\eeqq

As long as we have $\tilde{f}\in L^\infty(\RR^d\times B_R)$, the H\"older regularity result in \cite{IS} guarantees that $\tilde{u}$ is H\"older continuous in $\mathcal{V}\times\RR^{d-1}$, and consequently, $u(x,\theta)$ is continuous in a neighborhood of $(x_0,\theta_0)$.

In order to show that $\tilde{f}$ is bounded, and since $f,u \in L^\infty_{x,\theta}$ (by virtue of Theorem \ref{thm:wellposedness_FP2}), it remains to deduce the boundedness of $f_2$ and $f_3$ in \eqref{func_tf}. Indeed, denoting $\varphi(v')=\frac{\lvr^{\frac{d+1}{2}}}{\lvpr^{\frac{d+1}{2}}}$, we see that
\beqq
\begin{aligned}
f_2(x,v)&=\int \tilde{k}(x,v,v')\left(\varphi(v) - \varphi(v')\right)dv'\\
&=\frac{1}{2}\int \tilde{k}(x,v,v+z)(2\varphi(v) - \varphi(v+z) - \varphi(v-z))dz \\
&\quad+\frac{1}{2}\int (\tilde{k}(x,v,v-z)-\tilde{k}(x,v,v+z))(\varphi(v)-\varphi(v-z))dz.
\end{aligned}
\eeqq
Moreover, since $\varphi$ is twice-continuously differentiable and using also that $u(x,v)$ is compactly supported, we have that (again from \eqref{ineq_h})
\beqq
\begin{aligned}
\left|f_2(x,v)\right|&\lesssim |u(x,v)|\left(\int_{|z|<1} \frac{1}{|z|^{d-1-2(1-s)}} dz + \int_{|z|>1} \frac{1}{|z|^{d-1+2s}} dz \right)\lesssim \|u\|_{\infty}.
\end{aligned}
\eeqq
We similarly obtain $|f_3(x,v)| \lesssim \|u\|_{\infty}$, which allows us to conclude that $\|\tilde{f}\|_\infty\lesssim \|f\|_\infty$ and the proof is complete.
\end{proof}

We saw above that solutions to the fFPE can be viewed locally as solutions to evolution integro-differential kinetic equations, and it was this precise local form that allowed us to derive the regularity properties. How the optimal H\"older exponent  associated to solutions of \eqref{fFP_diff} depends on the regularity of the source $f$ term (e.g., Lipschitz continuous) remains unclear. 

Several recent results related to the more general and/or closely related non-local kinetic models, such as the Boltzmann equation without cut-off, may be found in \cite{IS,IS2,IS3,S}. 
It is expected that results along those lines could lead to quantitative approximation estimates in the 1-Wasserstein sense between the (stationary) radiative transfer solution, in the narrow beam regime, and the fractional Fermi pencil-beam solution. We proved above that the radiative transfer solution converged in the highly forward-peaked limit, in a weak sense (and with no known estimates), to the fractional Fokker--Planck solution (Theorem \ref{thm:approx_fFP}), and furthermore, the latter solution can be subsequently approximated by a pencil-beam in the narrow beam regime (Theorem \ref{thm:narrow_beam}, above).
The main obstacle that prevents the application of the regularity results in \cite{IS,IS2,IS3,S} resides in the a-priori regularity assumed on the solutions. The H\"older estimates proved in \cite{IS,S} hold for weak solutions and thus are appropriate for our work. In contrast, the local Schauder estimates and their global extension obtained respectively in \cite{IS2} and \cite{IS3}, assume solutions to be at least classical. A generalization of the Schauder estimates for weak solutions would be necessary to quantify the convergence of radiative transfer to the Fokker-Planck equation, something we do not pursue here. 

\section{Analysis of the Fractional Fermi pencil-beam equation}\label{sec:fFpb}
\subsection{Existence of solutions and properties}
Let $F\in C(\RR_+;\mathcal{S}'(\RR^{2(n-1)}))$ and $G\in \mathcal{S}'(\RR^{2(n-1)})$. In this section we will assume $\tilde{\sigma}(X^n),\tilde{\lambda}(X^n)\in C(\bar{\RR}_+)$. Analogously to the non-fractional case we have an explicit characterization of the solutions to the fractional Fermi pencil-beam equation. However, this time it is explicit only in the Fourier domain.

We denote by $\mathfrak{F}_{X'}[f](\xi,X^d,V) := \int e^{-\ii X'\cdot\xi} f(X,V)dX'$ the Fourier transformation of $f$ with respect to $X'$, and similarly we denote by $\mathfrak{F}_V$ the Fourier transform operator with respect to the angular variable $V$. We also write $\mathfrak{F}_{X',V}=\mathfrak{F}_{X'}\mathfrak{F}_{V}$.
\begin{lemma}\label{lemma:solution_fFPB}
For the above choice of parameters there exists a unique solution to 
\begin{equation}\label{fFPB}
\left\{\begin{array}{ll}
\partial_{X^n}U + V\cdot \nabla_{X'}U + \tilde{\sigma}(-\Delta_V)^sU + \tilde{\lambda} U =F, &X=(X',X^n)\in\RR^n_+,\; V\in\RR^{n-1}\\
U= G,&(X',V)\in \RR^{2(n-1)},\; X^n=0,
\end{array}\right.
\end{equation}
whose Fourier Transform (with respect to transversal and angular variables) is given by
\begin{equation}\label{fFBP_sol}
\begin{aligned}
\mathfrak{F}_{X',V}[U](\xi,X^n,\eta) &= e^{-\int^{X^n}_0\tilde{\lambda}(r)dr}\mathfrak{F}_{X',V}[G](\xi,\eta+X^n\xi)e^{-\int^{X^n}_0|\eta+(X^n-t)\xi|^{2s}\tilde{\sigma}(t)dt}\\
&\quad + \int^{X^n}_0e^{-\int^{X^n}_t\tilde{\lambda}(r)dr}\mathfrak{F}_{X',V}[F](\xi,t,\eta+(X^n-t)\xi)e^{-\int^{X^n}_t|\eta + (X^n-r)\xi|^{2s}\tilde{\sigma}(r)dr} dt.
\end{aligned}
\end{equation}
\end{lemma}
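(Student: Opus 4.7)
The plan is to apply the partial Fourier transform in $(X',V)$ to \eqref{fFPB}, converting the problem into a first-order linear transport equation in $(X^n,\eta)$ parametrized by $\xi$, and then to integrate along its characteristics. Writing $\hat U=\mathfrak{F}_{X',V}U$, $\hat F=\mathfrak{F}_{X',V}F$, $\hat G=\mathfrak{F}_{X',V}G$, the symbol identities $\mathfrak{F}_{X'}[\partial_{X'_j}]=\ii\xi_j$ and $\mathfrak{F}_V[V_j\,\cdot\,]=\ii\partial_{\eta_j}$ combine to yield $V\cdot\nabla_{X'}\mapsto-\xi\cdot\nabla_\eta$; the fractional Laplacian becomes multiplication by $|\eta|^{2s}$; and $\partial_{X^n}$ together with the $X^n$-dependent coefficients $\tilde\sigma,\tilde\lambda$ pass unchanged through the transform. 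Thus, for each fixed $\xi\in\RR^{n-1}$,
\[
\partial_{X^n}\hat U-\xi\cdot\nabla_\eta\hat U+\bigl(\tilde\sigma(X^n)|\eta|^{2s}+\tilde\lambda(X^n)\bigr)\hat U=\hat F,\qquad\hat U\big|_{X^n=0}=\hat G.
\]

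Next I solve this equation by the method of characteristics. Along the curve $t\mapsto(t,\eta+(X^n-t)\xi)$ for $t\in[0,X^n]$---which satisfies $d\eta/dt=-\xi$ and arrives at $\eta$ when $t=X^n$---the function $H(t):=\hat U(\xi,t,\eta+(X^n-t)\xi)$ satisfies the scalar linear ODE
\[
H'(t)+\bigl(\tilde\sigma(t)|\eta+(X^n-t)\xi|^{2s}+\tilde\lambda(t)\bigr)H(t)=\hat F(\xi,t,\eta+(X^n-t)\xi),\qquad H(0)=\hat G(\xi,\eta+X^n\xi).
\]
Multiplying by the integrating factor $\mu(t)=\exp\int_0^t\bigl(\tilde\sigma(r)|\eta+(X^n-r)\xi|^{2s}+\tilde\lambda(r)\bigr)\,dr$ and evaluating the resulting Duhamel identity at $t=X^n$ produces exactly \eqref{fFBP_sol} (the $\tilde\lambda$- and $\tilde\sigma$-exponentials separate cleanly because $\tilde\lambda$ is independent of $\eta$). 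Uniqueness follows at once from linearity: the difference of two solutions satisfies the problem with vanishing source and datum, and the same derivation then forces its Fourier transform to be identically zero.

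The one genuine technical point is that $F$ and $G$ are only tempered distributions in the transverse variables, so the manipulations above must be interpreted in $\mathcal{S}'$. This causes no real difficulty: the multiplier $\eta\mapsto\exp\bigl(-\int_t^{X^n}\tilde\sigma(r)|\eta+(X^n-r)\xi|^{2s}\,dr\bigr)$ is bounded by $1$ and, together with all its $\eta$-derivatives, grows at most polynomially in $\eta$ (uniformly on bounded ranges of $(X^n,t)$), so it lies in the multiplier algebra $\mathcal{O}_M(\RR^{n-1})$ and its product with any tempered distribution is again a tempered distribution; translations in $\eta$ likewise preserve $\mathcal{S}'$. With these justifications, \eqref{fFBP_sol} defines a tempered distribution depending continuously on $X^n$, and a direct differentiation under the transform confirms that its inverse image solves \eqref{fFPB}. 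Beyond these formalities the argument is routine, so the whole proof reduces to this characteristic computation.
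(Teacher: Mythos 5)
Your proof is correct and follows essentially the same route as the paper, which gives no details here and simply refers to the analogous Fourier-transform-plus-characteristics computation in the non-fractional case (\cite[Proposition 4.1]{BP}); your Duhamel formula along the characteristic $t\mapsto(t,\eta+(X^n-t)\xi)$ reproduces \eqref{fFBP_sol} exactly. One small inaccuracy in the distributional justification: for $s<1/2$ the symbol $\eta\mapsto\exp\bigl(-\int_t^{X^n}|\eta+(X^n-r)\xi|^{2s}\tilde\sigma(r)\,dr\bigr)$ is \emph{not} in $\mathcal{O}_M$, since $|\cdot|^{2s}$ fails to be smooth where the argument vanishes (e.g.\ at $\eta=0$ when $\xi=0$); the correct fix is to observe that this multiplier is the Fourier transform of the integrable, polynomially decaying kernel $\mathfrak{J}$ constructed in Section \ref{subsec:integ_fFpb}, so its action on $\hat G$, $\hat F$ is realized as convolution with an $L^1$ kernel, which suffices for the sources actually used.
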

\begin{proof}
It follows in a similar fashion as in the non-fractional case. We refer the reader to the proof of \cite[Proposition 4.1]{BP}.
\end{proof}

\begin{lemma}\label{lemma:radial_sym} Let $U$ as in the previous lemma with null interior source ($F=0$) and $G=\delta(X)\delta(V)$. Then,
$$\int_{\RR^{2(d-1)}} \big(X'\cdot \xi_0\big)U(X',X^d,V)dX'dV=0,\quad\text{for all }\xi_0\in\RR^{d-1}\text{ and }X^d>0.$$
$$\int_{\RR^{d-1}} (-\Delta_V)^sU(X',X^d,V)dV=0,\quad\text{for all }X'\in\RR^{d-1}\text{ and }X^d>0.$$
\end{lemma}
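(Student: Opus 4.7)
The plan is to leverage two structural features of the initial–value problem: the joint $O(d-1)$-symmetry shared by the pencil–beam operator and the radial initial datum $\delta(X')\delta(V)$, and the translation invariance of Lebesgue measure in $V$. Both claims are understood for $X^d>0$, where the Fourier representation in Lemma~\ref{lemma:solution_fFPB} reduces to
$\widehat U(\xi,X^d,\eta)=e^{-\int_0^{X^d}\tilde\lambda(r)\,dr}\,\exp\!\Bigl(-\int_0^{X^d}|\eta+(X^d-t)\xi|^{2s}\tilde\sigma(t)\,dt\Bigr)$,
which decays like a stretched exponential in $(\xi,\eta)$; consequently $U(\cdot,X^d,\cdot)$ is smooth and bounded, and integrable in $V$ at each fixed $X'$.

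First I would establish the symmetry $U(RX',X^d,RV)=U(X',X^d,V)$ for every $R\in O(d-1)$. The drift $V\cdot\nabla_{X'}$ is invariant under the joint action $(X',V)\mapsto(RX',RV)$; $(-\Delta_V)^s$ commutes with orthogonal transformations in $V$ (its symbol $|\eta|^{2s}$ is radial); the coefficients $\tilde\lambda,\tilde\sigma$ depend only on $X^d$; and $\delta(RX')\delta(RV)=\delta(X')\delta(V)$ since $|\det R|=1$. Hence $\tilde U(X',X^d,V):=U(RX',X^d,RV)$ solves the same pencil–beam problem, and uniqueness in Lemma~\ref{lemma:solution_fFPB} forces $\tilde U=U$.

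For the first identity, I would change variables $(X',V)\mapsto(R^TX',R^TV)$ in $\mathbf{A}(X^d):=\int_{\RR^{2(d-1)}} X'\,U\,dX'\,dV$ and invoke the symmetry above to get $\mathbf{A}=R\,\mathbf{A}$ for every $R\in O(d-1)$; the only orthogonally invariant vector in $\RR^{d-1}$ is $0$, so dotting with any $\xi_0$ yields the first claim. For the second identity, I would pass to the Fourier side in $V$ alone and use that $(-\Delta_V)^s$ becomes multiplication by $|\eta|^{2s}$, giving
$\int_{\RR^{d-1}}(-\Delta_V)^s U\,dV=\mathfrak{F}_V[(-\Delta_V)^s U](X',X^d,0)=|\eta|^{2s}\,\mathfrak{F}_V[U](X',X^d,\eta)\big|_{\eta=0}=0,$
which is legitimate because $\mathfrak{F}_V[U](X',X^d,0)=\int U\,dV$ is finite by the explicit formula above. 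As an alternative, one can use the singular–integral form of $(-\Delta_V)^s$, swap order of integration by Fubini on the truncation $\{|z|>\epsilon\}$ (on which the integrand is in $L^1$), apply translation invariance $\int(U(V)-U(V+z))\,dV=0$, and let $\epsilon\downarrow 0$.

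The main technical obstacle is absolute integrability of $|X'\,U|$ in the first identity when $2s\le1$. The cusp of $|\xi|^{2s}$ at the origin means $U(\cdot,X^d,\cdot)$ decays only polynomially (like $|X'|^{-(d-1+2s)}$) in $X'$ after integrating in $V$, so $\mathbf{A}(X^d)$ must be interpreted as a symmetric principal value. This is harmless: the $R=-I$ case of the symmetry already gives $U(-X',X^d,-V)=U(X',X^d,V)$, ensuring cancellation in the symmetric truncation. Beyond this, Step~2 is essentially soft once the symmetry of Step~1 is in hand, and Step~3 sidesteps the issue entirely by working on the Fourier side in $V$.
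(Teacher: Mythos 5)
Your proof is correct. For the second identity you do exactly what the paper does: evaluate $\mathfrak{F}_V[(-\Delta_V)^sU]$ at $\eta=0$, where the symbol $|\eta|^{2s}$ kills the zero mode. For the first identity, however, you take a genuinely different route. The paper also works on the Fourier side: it identifies $\int (X'\cdot\xi_0)U\,dX'dV$ with $\ii\,\xi_0\cdot\nabla_\xi\mathfrak{F}_{X',V}[U](0,X^d,0)$ and differentiates the explicit formula $\exp\bigl(-|\xi|^{2s}\int_0^{X^d}(X^d-t)^{2s}\tilde\sigma(t)\,dt\bigr)$, obtaining a factor $|\xi|^{2s-1}\hat\xi$ that is declared to vanish at $\xi=0$. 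You instead prove the joint $O(d-1)$-equivariance $U(RX',X^d,RV)=U(X',X^d,V)$ from uniqueness and conclude that the first moment vector is fixed by every rotation, hence zero. The two arguments buy different things: the paper's is a one-line computation from the explicit solution, but the factor $|\xi|^{2s-1}$ only tends to $0$ when $s>1/2$ (for $s\le 1/2$ the gradient of $\hat U$ at $\xi=0$ does not exist classically, consistent with Remark \ref{rmk:sharp_decay_J}, which gives $\||X'|^m J\|_{L^1}<\infty$ only for $m<2s$); your symmetry argument, read as a symmetric principal value with the $R=-I$ cancellation, covers that degenerate range cleanly and also explains \emph{why} the moment vanishes. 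Since the lemma is invoked in the proof of Lemma \ref{lemma:approx_fFpb} for the first identity only when $s\in(1/2,1)$, the integrability caveat is harmless in the application, but your treatment of it is the more careful of the two.
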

\begin{proof}
Using that integration is equivalent to the zero Fourier coefficient we get
$$
\begin{aligned}
&\int_{\RR^{2(d-1)}} \big(X'\cdot \xi_0\big)U(X',X^d,V)dX'dV\\
&\hspace{2em}=\int_{\RR^{d-1}} \big(X'\cdot \xi_0\big)\mathfrak{F}_{V}[U](X',X^d,0)dX'\\
&\hspace{2em}=\ii\xi_0\cdot \big[\nabla_{\xi}\mathfrak{F}[U]\big](0,X^d,0)\\
&\hspace{2em}=\ii e^{-\int^{X^n}_0\tilde{\lambda}(r)dr}\left(2s\textstyle \int^{X^d}_0(X^d-t)^{2s}\tilde{\sigma}(t)dt\right)\big(\xi_0\cdot \hat{\xi} \big)|\xi|^{2s-1}e^{-|\xi|^{2s}\int^{X^n}_0(X^n-t)^{2s}\tilde{\sigma}(t)dt}\Big|_{\xi =0}=0.
\end{aligned}
$$
Similarly,
$$
\begin{aligned}
\int_{\RR^{d-1}} (-\Delta_V)^sU(X',X^d,V)dV&=\mathfrak{F}_{V}[(-\Delta_V)^sU](X',X^d,0)\\
&=|\eta|^{2s}\mathfrak{F}[U](X',X^d,\eta)\big|_{\eta=0}\quad =0.
\end{aligned}
$$
\end{proof}

\begin{lemma}\label{lemma:IbyP}
Let $U$ be as in Lemma \ref{lemma:solution_fFPB} with null interior source ($F=0$) and $G=\delta(\cdot-Y',\cdot-W)$. Then, for any $\Phi\in C(\RR^n_+\times\RR^{n-1})\cap H^1(\RR^n_+;L^2(\RR^{2(n-1)}))$ we have the following integration by parts formula
$$
\int_{\RR^n_+\times\RR^{n-1}}U\partial_{X^n}\Phi dXdV = -\int_{\RR^n_+\times\RR^{n-1}}\big(\partial_{X^n}U\big)\Phi dXdV - \Phi(Y',0,W).
$$
\end{lemma}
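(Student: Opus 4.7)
The plan is to regularize by restricting to the slab $\{X^n\geq \delta>0\}$, where $U$ is smooth and rapidly decaying in $(X',V)$, apply a one-dimensional integration by parts in $X^n$, and then let $\delta\to 0^+$. The whole content of the lemma reduces to identifying the resulting boundary integral at $X^n=\delta$ with the pointwise value $-\Phi(Y',0,W)$ in the limit.

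First I would unpack the Fourier representation of Lemma~\ref{lemma:solution_fFPB}. With $F=0$ and $\mathfrak{F}_{X',V}[G](\xi,\eta)=e^{-\ii(Y'\cdot\xi+W\cdot\eta)}$ one has
\beqq
\mathfrak{F}_{X',V}[U](\xi,X^n,\eta)=e^{-\int_0^{X^n}\tilde{\lambda}(r)dr}\,e^{-\ii(Y'\cdot\xi+W\cdot(\eta+X^n\xi))}\,e^{-\int_0^{X^n}|\eta+(X^n-t)\xi|^{2s}\tilde{\sigma}(t)dt}.
\eeqq
For each $X^n>0$ the dissipative factor forces $U(\cdot,X^n,\cdot)\in\mathcal{S}(\RR^{2(n-1)})$, so $U$ is smooth on $(0,\infty)\times\RR^{2(n-1)}$, and $\partial_{X^n}U$ is obtained either by differentiating the Fourier representation or, equivalently, from the equation $\partial_{X^n}U=-V\cdot\nabla_{X'}U-\tilde{\sigma}(-\Delta_V)^sU-\tilde{\lambda}U$. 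The prefactor $e^{-\int_0^{X^n}\tilde{\lambda}dr}\leq e^{-\lambda_0 X^n}$ gives uniform exponential decay as $X^n\to\infty$ of $\|U(\cdot,X^n,\cdot)\|_{L^2_{X',V}}$, so Cauchy--Schwarz against $\Phi\in H^1(\RR^n_+;L^2_{X',V})$ makes both integrands in the claimed identity well-defined in $L^1_{X^n}$ on every $[\delta,\infty)$. For fixed $\delta>0$, Fubini and one-dimensional IBP in $X^n$ then yield
\beqq
\int_\delta^\infty\!\!\int_{\RR^{2(n-1)}} U\partial_{X^n}\Phi\,dX'dVdX^n=-\int_\delta^\infty\!\!\int_{\RR^{2(n-1)}} (\partial_{X^n}U)\Phi\,dX'dVdX^n-\int_{\RR^{2(n-1)}}\! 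U(X',\delta,V)\Phi(X',\delta,V)dX'dV,
\eeqq
with no contribution at $X^n=\infty$ thanks to the exponential decay. As $\delta\to 0^+$, dominated convergence carries the two interior integrals over to their analogues on $\RR^n_+$.

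The crux is the limit of the boundary term $\mathcal{B}_\delta:=\int U_\delta(X',V)\Phi(X',\delta,V)dX'dV$ with $U_\delta(X',V):=U(X',\delta,V)$. I would split
\beqq
\mathcal{B}_\delta=\Phi(Y',0,W)\int U_\delta\,dX'dV+\int U_\delta\big[\Phi(X',\delta,V)-\Phi(Y',0,W)\big]\,dX'dV.
\eeqq
Evaluating the Fourier representation at $(\xi,\eta)=(0,0)$ gives $\int U_\delta dX'dV=e^{-\int_0^\delta\tilde{\lambda}dr}\to 1$, handling the first piece. For the second, I would show that $U_\delta$ is nonnegative and is an approximate identity concentrated at $(Y',W)$: the factor $e^{-\int_0^\delta|\eta+(\delta-t)\xi|^{2s}\tilde{\sigma}(t)dt}$ is (as a pointwise limit of Riemann sums) the characteristic function of a convolution of $2s$-stable laws and is therefore the Fourier transform of a positive density, and the full characteristic function of $U_\delta$ tends pointwise to $e^{-\ii(Y'\cdot\xi+W\cdot\eta)}$ as $\delta\to 0^+$, so $U_\delta\to\delta(\cdot-Y')\delta(\cdot-W)$ in $\mathcal{S}'$. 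Combined with continuity of $\Phi$ at $(Y',0,W)$, the second piece vanishes in the limit, giving $\mathcal{B}_\delta\to\Phi(Y',0,W)$ as needed.

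The main technical obstacle is controlling $U_\delta$ far from $(Y',W)$ when $\Phi$ is only continuous and in $H^1(\RR^n_+;L^2_{X',V})$, hence potentially unbounded on $\RR^{2(n-1)}$: convergence in $\mathcal{S}'$ is not \emph{a priori} enough to pair with such $\Phi$. This forces the use of quantitative tail bounds on $U_\delta$ of $2s$-stable scaling type, read off from the explicit Fourier representation, together with the $L^2$-control $\|\Phi(\cdot,\delta,\cdot)\|_{L^2_{X',V}}\lesssim\|\Phi\|_{H^1_{X^n}L^2_{X',V}}$ furnished by the trace theorem, so as to dominate the integral on the complement of a small neighborhood of $(Y',W)$ and then invoke continuity of $\Phi$ on that neighborhood.
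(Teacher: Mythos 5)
Your proposal takes a genuinely different route from the paper's. The paper proves the identity by applying Plancherel in $(X',V)$, integrating by parts in $X^n$ in Fourier space, and then reading off the boundary contribution at $X^n=0$ directly from $\mathfrak{F}_{X',V}[U]\big|_{X^n=0}=\mathfrak{F}_{X',V}[G]=e^{-\ii(Y'\cdot\xi+W\cdot\eta)}$, so the boundary term is literally $-\int e^{\ii Y'\cdot\xi+\ii W\cdot\eta}\mathfrak{F}_{X',V}[\Phi]\big|_{X^n=0}\,d\xi d\eta=-\Phi(Y',0,W)$. No physical-space approximate-identity or concentration argument is needed; the Dirac initial datum is never ``resolved'' in configuration space, it is just the constant-modulus phase factor in Fourier space. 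Your approach instead regularizes in $X^n$, integrates by parts on $\{X^n\geq\delta\}$, and recovers $-\Phi(Y',0,W)$ as the limit of the boundary trace; the structure of that plan (splitting off $\Phi(Y',0,W)\int U_\delta$, mass normalization via $\hat U_\delta(0,0)$, weak-$*$ concentration of $U_\delta$) is sound and is a reasonable alternative.

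The issue is with the step you correctly flagged as the technical obstacle: controlling $\int U_\delta\,[\Phi(\cdot,\delta,\cdot)-\Phi(Y',0,W)]$ away from $(Y',W)$ when $\Phi$ is only continuous and has an $L^2$ trace. Your proposed remedy---Cauchy--Schwarz using $\|\Phi(\cdot,\delta,\cdot)\|_{L^2}$ together with tail bounds on $U_\delta$---does not close quantitatively. From the self-similar form \eqref{self_similar_J}, $U_\delta(X',V)\sim\delta^{-(d-1)(1+1/s)}\mathfrak{J}\big(X'/\delta^{1+1/2s},V/\delta^{1/2s}\big)$, so, for fixed $r>0$,
\beqq
\|U_\delta\|_{L^2(B_r^c)}^2 \sim \delta^{-(d-1)(1+1/s)}\int_{|w|\gtrsim r\delta^{-1/2s}}\mathfrak{J}^2(w)\,dw.
\eeqq
The decay estimate in \S\ref{subsec:integ_fFpb} gives $\mathfrak{J}(w)\lesssim(1+|w|^2)^{-\frac{d-1}{2}-\gamma}$ only with $\gamma\leq s$, so the tail integral is of order $(r\delta^{-1/2s})^{-4\gamma}$ and
\beqq
\|U_\delta\|_{L^2(B_r^c)}^2 \lesssim r^{-4\gamma}\,\delta^{-(d-1)(1+1/s)+2\gamma/s}.
\eeqq
Even taking the best possible $\gamma=s$, the exponent is $-(d-1)(1+1/s)+2$, which is strictly negative for every $d\geq 2$ and $s\in(0,1)$, so $\|U_\delta\|_{L^2(B_r^c)}\to\infty$ rather than $0$. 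The Cauchy--Schwarz route therefore fails: the $2s$-stable tails of $\mathfrak{J}$ are too heavy to make the $L^2$ mass on the exterior small enough to beat the rescaling prefactor. You would need either $\Phi$ bounded (then tightness $\int_{B_r^c}U_\delta\to 0$ suffices, which is what actually happens in the paper's application), or an interpolation in $L^p$ for some $p<2$, or some other mechanism---as written the plan has a real gap. The paper's Fourier-side IBP avoids the issue entirely because it never materializes a boundary integral in configuration space.
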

\begin{proof}
We use Plancherel's formula to write
$$
\begin{aligned}
&\int_{\RR^n_+\times\RR^{n-1}}U\partial_{X^n}\Phi dXdV = \int_{\RR^{2(n-1)}\times\RR_+}\mathfrak{F}_{X',V}[U]\big(\partial_{X^n}\mathfrak{F}_{X',V}[\Phi]\big)dXdV d\xi'd\eta dX^n\\
=&- \int_{\RR^{2(n-1)}\times\RR_+} \!\!\!\!\!\! \big(\partial_{X^n}\mathfrak{F}_{X',V}[U]\big)\mathfrak{F}_{X',V}[\Phi]dXdV d\xi'd\eta dX^n
-\int_{\RR^{2(n-1)}}\!\!\!\!\!\mathfrak{F}_{X',V}[U]|_{X^n=0}\mathfrak{F}_{X',V}[\Phi]|_{X^n=0}d\xi d\eta
\\
=&-\int_{\RR^n_+\times\RR^{n-1}}\big(\partial_{X^n}U\big)\Phi dXdV
- \int e^{\ii Y'\cdot \xi + \ii W\cdot\eta}\mathfrak{F}_{X',V}[\Phi]|_{X^n=0}d\xi d\eta.
\end{aligned}
$$
\end{proof}

\begin{lemma}\label{lemma:FPb_Lip}
Let $F\in C_c(\bar{Q})$ and Lipschitz continuous with respect to $Z=(X',V)$, and $G=0$. Let $U$ be the solution to \eqref{fFPB}. There exists $C>0$ so that $U_{X^n}(Z) = U(X',X^n,V)$ satisfies
$$|U_{X^n}(Z_1) -U_{X^n}(Z_2) |\leq C\Lip(F)|Z_1-Z_2|,$$
for all $Z_1,Z_2\in\RR^{2(n-1)}$ and $X^n>0$.
\end{lemma}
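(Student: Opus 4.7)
The plan is to transform the Fourier formula from Lemma~\ref{lemma:solution_fFPB} (with $G=0$) into a Duhamel convolution in physical variables against a non-negative transition kernel, and then estimate the difference $U_{X^n}(Z_1)-U_{X^n}(Z_2)$ using the Lipschitz bound on $F$, with the absorption providing uniformity in $X^n$.

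\textbf{Step 1: Duhamel representation.} Starting from~\eqref{fFBP_sol} with $G=0$, I would substitute $\eta'=\eta+(X^n-t)\xi$ inside the inverse Fourier transform. The exponent in the scattering factor becomes $|\eta'+(t-r)\xi|^{2s}$, and the phase rearranges as $\xi\cdot X'+\eta\cdot V = \xi\cdot\bigl(X'-(X^n-t)V\bigr)+\eta'\cdot V$. This yields the physical-space Duhamel formula
\begin{equation*}
U(X',X^n,V) = \int_0^{X^n} e^{-\int_t^{X^n}\tilde\lambda(r)\,dr} \int_{\RR^{2(n-1)}} \mathcal{G}_{t,X^n}(A,B)\, F\bigl(X'-(X^n-t)V-A,\,t,\,V-B\bigr)\,dA\,dB\,dt,
\end{equation*}
where $\mathcal{G}_{t,X^n}$ is the inverse Fourier transform of $(\xi,\eta')\mapsto \exp\bigl(-\int_t^{X^n} |\eta'+(t-r)\xi|^{2s}\tilde\sigma(r)\,dr\bigr)$. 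For each $r$, the exponent $|\eta'+(t-r)\xi|^{2s}$ is the characteristic exponent of the degenerate symmetric $2s$-stable law supported on $\{((t-r)Z,Z):Z\in\RR^{n-1}\}\subset\RR^{2(n-1)}$, so the full exponential is the characteristic function of an infinitely divisible probability measure. Hence $\mathcal{G}_{t,X^n}\geq 0$ with $\int \mathcal{G}_{t,X^n}\,dA\,dB=1$.

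\textbf{Step 2: Lipschitz estimate.} For $Z_i=(X'_i,V_i)$, write $h'=X'_1-X'_2$ and $h''=V_1-V_2$. Subtracting the two representations, the $X'$-slot of $F$ shifts by $h'-(X^n-t)h''$ and the $V$-slot by $h''$, so Lipschitz continuity of $F$ in $Z$ gives a pointwise bound $\Lip(F)\bigl(|h'|+(1+X^n-t)|h''|\bigr)$ on the integrand. Using $\tilde\lambda\geq\lambda_0>0$ from~\eqref{cond_coeff} together with the unit total mass of $\mathcal{G}_{t,X^n}$, the substitution $u=X^n-t$ gives
\begin{equation*}
|U_{X^n}(Z_1)-U_{X^n}(Z_2)|\leq \Lip(F)\int_0^{\infty} e^{-\lambda_0 u}\bigl(|h'|+(1+u)|h''|\bigr)\,du \leq C\,\Lip(F)\bigl(|h'|+|h''|\bigr),
\end{equation*}
with $C=C(\lambda_0)$ independent of $X^n$, from which the conclusion follows via $|h'|+|h''|\lesssim|Z_1-Z_2|$.

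\textbf{Main obstacle.} The principal technical point is Step~1: identifying $\mathcal{G}_{t,X^n}$ as a genuine non-negative probability density rather than just a tempered distribution. This requires recognizing the shear structure $|\eta+(t-r)\xi|^{2s}$ as a degenerate Lévy stable exponent and invoking the Lévy--Khintchine representation (an equivalent route is to interpret $\mathcal{G}_{t,X^n}$ as the free transition density of the pencil-beam transport-diffusion dynamics on $[t,X^n]$, whose mass conservation follows from the fractional Laplacian in $V$ together with the transport in $X'$). The coupling between $X'$ and $V$ through $V\cdot\nabla_{X'}$ is exactly what forces the growing factor $(X^n-t)|h''|$ in the Lipschitz bound on $V$-differences, and it is precisely the absorption $\tilde\lambda\geq\lambda_0>0$ that tames this growth and yields a constant $C$ uniform in $X^n>0$.
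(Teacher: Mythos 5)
Your proof is correct and follows the same overall strategy as the paper's: pass to a physical-space Duhamel representation, shift the argument of $F$ by $(h'-(X^n-t)h'',h'')$, apply $\Lip(F)$, and let the absorption $\tilde\lambda\geq\lambda_0$ control the linear growth $(1+X^n-t)$ produced by the shear. The one place where you diverge is in how the kernel's uniform $L^1$-bound is obtained: the paper keeps the inverse Fourier transform $\mathfrak{F}_{X',V}^{-1}\big[e^{-\int_t^{X^n}|\eta+(X^n-r)\xi|^{2s}\tilde\sigma(r)\,dr}\big]$ inside absolute value bars and simply asserts that the resulting $(t,X',V)$-integral is finite, whereas you recognize that kernel (up to an irrelevant phase from the $\eta\mapsto\eta'$ substitution) as the characteristic function of an infinitely divisible probability measure---a superposition of degenerate $2s$-stable laws along the shear lines---so it is non-negative with total mass exactly one. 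Your route makes the uniform $L^1$-bound immediate and sidesteps the fact that the self-similarity formula \eqref{self_similar_J} is written only for the window $[0,X^d]$ and would need a small additional argument to cover the general windows $[t,X^n]$ arising here; the probabilistic observation handles all of them at once. The remaining subtlety you flag---whether $\mathcal{G}_{t,X^n}$ is a density or merely a measure---is genuinely immaterial for the estimate, since only non-negativity and unit mass enter.
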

\begin{proof}
For arbitrary $(X'_1,X^n,V_1),(X'_2,X^n,V_2)\in \RR^{2(n-1)}$ we have
$$
\begin{aligned}
&U(X'_1,X^n,V_1)-U(X'_2,X^n,V_2) \\
&=\int e^{\ii(X'_1\cdot \xi)+\ii(V_1\cdot \eta)}\mathfrak{F}_{X',V}[U](\xi,X^n,\eta) d\xi d\eta-\int e^{\ii(X'_2\cdot \xi)+\ii(V_2\cdot \eta)}\mathfrak{F}_{X',V}[U](\xi,X^n,\eta) d\xi d\eta\\
&=\int \int^{X^n}_0e^{-\int^{X^n}_t\tilde{\lambda}(r)dr}\mathfrak{F}_{V}\mathfrak{F}_{X'}\big[\tau_{X_1' - (X^n-t)(V-V_1)}\tau_{V_1}F-\tau_{X_2' - (X^n-t)(V-V_2)}\tau_{V_2}F\big]\\
&\quad \times \left[e^{-\int^{X^n}_t|\eta + (X^n-r)\xi|^{2s}\tilde{\sigma}(r)dr} \right]dtd\xi dV\\
&=\int \int^{X^n}_0e^{-\int^{X^n}_t\tilde{\lambda}(r)dr}\big(\tau_{X_1' - (X^n-t)(V-V_1)}\tau_{V_1}F-\tau_{X_2' - (X^n-t)(V-V_2)}\tau_{V_2}F\big)\\
&\quad \times \mathfrak{F}_{X'}^{-1}\mathfrak{F}_{V}^{-1}\left[e^{-\int^{X^n}_t|\eta + (X^n-r)\xi|^{2s}\tilde{\sigma}(r)dr} \right]dtdX' dV.
\end{aligned}
$$
Therefore,
$$
\begin{aligned}
\big|U(X'_1,X^n,V_1)-U(X'_2,X^n,V_2) \big| &\leq \text{Lip}(F)\int \int^{X^n}_0|(X'_1-X'_2 - (X^n-t)(V_1-V_2),V_1-V_2)| \\
&\quad \times e^{-\int^{X^n}_t\tilde{\lambda}(r)dr}\Big|\mathfrak{F}_{X',V}^{-1}\left[e^{-\int^{X^n}_t|\eta + (X^n-r)\xi|^{2s}\tilde{\sigma}(r)dr} \right]\Big|dtdX' dV\\
&\leq C\Lip(F)\big(|X_1'-X_2'|+|V_1-V_2|\big),
\end{aligned}
$$
for $C>0$, a uniform upper bound of
$$
\int \int^{X^n}_0\big(1+(X^n-t)\big)e^{-\int^{X^n}_t\tilde{\lambda}(r)dr}\Big|\mathfrak{F}_{X',V}^{-1}\left[e^{-\int^{X^n}_t|\eta + (X^n-r)\xi|^{2s}\tilde{\sigma}(r)dr} \right]\Big|dtdX' dV<+\infty.
$$
\end{proof}

\subsection{Integrability properties of solutions}\label{subsec:integ_fFpb}

We state some integrability properties for the fundamental solution associated to the fractional Fermi equation that will be used in the following sections. The fractional Fermi equation \eqref{fFPB} is a slight generalization of the fractional Kolmogorov equation (see \S2.4 in \cite{IS})
\begin{equation}\label{Kol_eq}
f_t + v\cdot\nabla_xf + (-\Delta_v)^sf = h,\quad x,v\in\RR^{d-1}.
\end{equation}
Lemma \ref{lemma:solution_fFPB} provides us with a fundamental solution (solving \eqref{fFPB} with sources $F=0$ and $G=\delta(X)\delta(V)$) which takes the self-similar form
\begin{equation}\label{self_similar_J}
J(X,V) = c_{d-1}\frac{1}{(X^d)^{d-1+\frac{d-1}{s}}}\mathfrak{J}\left(\frac{X'}{(X^d)^{1+\frac{1}{2s}}},\frac{V}{(X^d)^{\frac{1}{2s}}}\right)\exp\left(-\int_0^{X^d}\tilde{\lambda}(\tau)d\tau\right),
\end{equation}
for some appropriate constant $c_{d-1}>0$, and with $\mathfrak{J}$ defined via its Fourier transform:
$$
\hat{\mathfrak{J}}(\xi,\eta;X^d) := \exp\left(-\int^1_0|\eta - \tau\xi|^{2s}\tilde{\sigma}\Big(X^d(1-\tau)\Big)d\tau\right).
$$
In what remains of this section, we abbreviate the exponential factor in the definition of $J$ by writing $\Lambda(X^d):= \exp\left(-\int_0^{X^d}\tilde{\lambda}(\tau)d\tau\right)$.
The solution to the initial value problem \eqref{fFPB} takes the form
\beq\label{sol_fFPB}
\begin{aligned}
U(X,V) &= \int_{\RR^{d-1}}\int_{\RR^{d-1}} G(Y',W)J(X'-Y'-X^dW,X^d,V-W)dWdY'\\
&\quad + \int_0^{X^d}\int_{\RR^{d-1}}\int_{\RR^{d-1}} F(Y,W)J(X'-Y'-(X^d-Y^d)W,X^d-Y^d,V-W)dWdY.
\end{aligned}
\eeq
The next proposition is the analogous to \cite[Proposition 2.1]{IS} for the fractional Fermi equation and we state it without proof.
\begin{proposition}
For $J$ and $\mathfrak{J}$ as above we have:
\begin{itemize}
\item[(1)] The function $\mathfrak{J}$ is $C^\infty$ and decays polynomially at infinity. Moreover, $\mathfrak{J}$ and all its derivatives are integrable in $\RR^{2(d-1)}$.
\item[(2)] For every $X^d>0$, $\int_{\RR^{2(d-1)}}J(X,V)dX'dV=\Lambda(X^d)$.
\item[(3)] $J\geq 0$ and $\mathfrak{J}\geq 0$.
\item[(4)] For any $X^d>0,$
\beqq
\begin{aligned}
\|J(\cdot,X^d,\cdot)\|_{L^1(\RR^{2(d-1)})} &= \Lambda(X^d)\|\mathfrak{J}\|_{L^1(\RR^{2(d-1)})}\\
\|(-\Delta_V)^{s/2}J(\cdot,X^d,\cdot)\|_{L^1(\RR^{2(d-1)})} &= (X^d)^{-1/2}\Lambda(X^d)\|(-\Delta_V)^{s/2}\mathfrak{J}\|_{L^1(\RR^{2(d-1)})}.
\end{aligned}
\eeqq
\end{itemize}
\end{proposition}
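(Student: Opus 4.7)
The four items split naturally in the order (2) $\to$ (3) $\to$ (1) $\to$ (4), since each leans on its predecessors. Part (2) is an instance of Fourier inversion at the origin: taking $F=0$ and $G=\delta(X)\delta(V)$ in Lemma \ref{lemma:solution_fFPB} gives
\[
\mathfrak{F}_{X',V}[J](\xi,X^d,\eta)=e^{-\int_0^{X^d}\tilde\lambda(r)dr}\exp\!\Bigl(-\!\int_0^{X^d}|\eta+(X^d-r)\xi|^{2s}\tilde\sigma(r)dr\Bigr),
\]
whose value at $(\xi,\eta)=(0,0)$ equals $\Lambda(X^d)$, and the left-hand side equals $\int_{\RR^{2(d-1)}} J\,dX'dV$.

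For part (3), I would adopt a Lévy--Khinchin viewpoint. For each fixed $\tau\in[0,1]$, the map $(\xi,\eta)\mapsto|\eta-\tau\xi|^{2s}\tilde\sigma(X^d(1-\tau))$ is the characteristic exponent of a (degenerate) symmetric $2s$-stable law on $\RR^{2(d-1)}$, since the quadratic form $|\eta-\tau\xi|^{2s}$ is the symbol of such a process. Its integral in $\tau$ is then the characteristic exponent of an infinitely divisible law whose Lévy measure is the corresponding mixture. Consequently $\hat{\mathfrak{J}}=e^{-\Phi}$, with
\[
\Phi(\xi,\eta;X^d):=\int_0^1|\eta-\tau\xi|^{2s}\tilde\sigma(X^d(1-\tau))d\tau,
\]
is the characteristic function of a probability measure, hence $\mathfrak{J}\geq 0$ with $\int\mathfrak{J}=1$. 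Positivity of $J$ then follows from \eqref{self_similar_J} and $\Lambda(X^d)>0$. An alternative route is a Trotter-type splitting for the semigroup generated by $V\cdot\nabla_{X'}+\tilde\sigma(-\Delta_V)^s$, both factors of which preserve positivity.

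Part (1) is where the main work lies. The key input is the coercivity estimate
\[
\Phi(\xi,\eta;X^d)\,\geq\, c\,\sigma_0\bigl(|\xi|^{2s}+|\eta|^{2s}\bigr),
\]
obtained by noting that $g(\xi,\eta):=\int_0^1|\eta-\tau\xi|^{2s}d\tau$ is continuous, positively $2s$-homogeneous, and strictly positive on the unit sphere of $\RR^{2(d-1)}$ (vanishing would force $\eta=\tau\xi$ for a.e.\ $\tau\in[0,1]$, hence $\xi=\eta=0$), so compactness yields a uniform lower bound. This gives stretched-exponential decay $|\hat{\mathfrak{J}}(\xi,\eta)|\leq\exp(-c\sigma_0(|\xi|^{2s}+|\eta|^{2s}))$, from which $(i\xi)^\alpha(i\eta)^\beta\hat{\mathfrak{J}}\in L^1$ for every multi-index and Fourier inversion yields $\mathfrak{J}\in C^\infty$ with every derivative bounded; combined with (3) and $\hat{\mathfrak{J}}(0,0)=1$ one concludes $\mathfrak{J}\in L^1$. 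The main obstacle is the polynomial decay of $\mathfrak{J}$ and the $L^1$-integrability of its derivatives: these require finite smoothness of $\hat{\mathfrak{J}}$, while $\Phi$ is only globally Hölder of exponent $\min(2s,1)$ near the exceptional set $\{\eta=\tau\xi:\tau\in[0,1]\}$. I would handle this by splitting the $\tau$-integration in $\Phi$ into a piece bounded away from the offending $\tau^*$ (smooth in $(\xi,\eta)$ by an implicit-function argument around the simple minimum of $\tau\mapsto|\eta-\tau\xi|^2$) and a small residual Hölder piece, and then integrating by parts finitely many times against $e^{\ii(X'\cdot\xi+V\cdot\eta)}$ to transfer the available regularity of $\hat{\mathfrak{J}}$ into polynomial decay in $(X',V)$.

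Finally, part (4) is a change of variables. Setting $Y'=X'/(X^d)^{1+1/(2s)}$ and $W=V/(X^d)^{1/(2s)}$, the Jacobian $(X^d)^{(d-1)(1+1/s)}$ exactly cancels the prefactor in \eqref{self_similar_J}, and non-negativity of $J$ from (3) converts $\|J(\cdot,X^d,\cdot)\|_{L^1}$ into the stated $\Lambda(X^d)\|\mathfrak{J}\|_{L^1}$. For the second identity I would use the scaling $(-\Delta_V)^{s/2}[g(\cdot/a)](V)=a^{-s}\bigl[(-\Delta_V)^{s/2}g\bigr](V/a)$ with $a=(X^d)^{1/(2s)}$, which produces the extra factor $a^{-s}=(X^d)^{-1/2}$ in front of $\mathfrak{J}$ before the same change of variables is applied; the $L^1$-finiteness of $(-\Delta_V)^{s/2}\mathfrak{J}$ is provided by (1), because $|\eta|^s\hat{\mathfrak{J}}(\xi,\eta)$ inherits the stretched-exponential decay and the smoothness arguments outlined above.
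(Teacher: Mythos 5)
The paper actually states this proposition \emph{without proof}, remarking only that it is ``analogous to [Proposition~2.1, \cite{IS}]'' and relying on the explicit Fourier representation already recorded in Lemma~\ref{lemma:solution_fFPB}; so there is no internal proof to compare against. On its own merits, your argument handles (2), (3) and (4) cleanly. For~(2), evaluating $\mathfrak{F}_{X',V}[J]$ at $(\xi,\eta)=(0,0)$ is exactly right. For~(3), the L\'evy--Khinchin observation is sound — for each $\tau$ the map $(\xi,\eta)\mapsto|\eta-\tau\xi|^{2s}$ is a (degenerate) negative-definite function, and a nonnegative mixture of such functions is again negative-definite, so $e^{-\Phi}$ is a characteristic function and $\mathfrak{J}\geq 0$ with total mass $1$ — although note that $|\eta-\tau\xi|^{2s}$ is a $2s$-homogeneous negative-definite function, not a ``quadratic form,'' so the phrasing should be amended. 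For~(4), the Jacobian computation and the scaling identity $(-\Delta_V)^{s/2}[g(\cdot/a)]=a^{-s}[(-\Delta_V)^{s/2}g](\cdot/a)$ with $a=(X^d)^{1/(2s)}$ give precisely the factor $(X^d)^{-1/2}$.

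Part~(1) is where your argument is genuinely incomplete, and you acknowledge as much. The coercivity $\Phi(\xi,\eta)\geq c\sigma_0(|\xi|^{2s}+|\eta|^{2s})$ (via homogeneity and strict positivity of $\int_0^1|\hat\eta-\tau\hat\xi|^{2s}d\tau$ on the unit sphere) is correct and indeed gives $\hat{\mathfrak{J}}$ stretched-exponential decay, hence $\mathfrak{J}\in C^\infty$ with all derivatives bounded, and $\mathfrak{J}\in L^1$ then comes for free from~(3). But the polynomial decay of $\mathfrak{J}$ and the $L^1$-integrability of $\partial^\alpha_{X'}\partial^\beta_V\mathfrak{J}$ require quantitative control on $\partial^\alpha_\xi\partial^\beta_\eta\hat{\mathfrak{J}}$, and your proposal — split the $\tau$-integral near the degenerate point of $\tau\mapsto|\eta-\tau\xi|^2$ and integrate by parts — is left at the level of a plan. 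The route the paper itself implements immediately after the proposition is more direct and worth comparing to: differentiate $\hat{\mathfrak{J}}$ under the integral sign, bound
\[
\Lambda(X^d)^{-1}\big|\partial^\alpha_{\xi}\partial^\beta_\eta\hat{\mathfrak{J}}\big|
\ \lesssim\ \int_0^1\frac{d\rho}{|\eta+\rho\xi|^{\,|\alpha|+|\beta|-2s}}\,e^{-\sigma_0\int_0^1|\eta+\tau\xi|^{2s}d\tau},
\]
check (Fubini) that the right-hand side is in $L^1_{\xi,\eta}$ as long as $|\alpha|+|\beta|-2s<d-1$, and invoke Riemann--Lebesgue to conclude $X'^{\alpha}V^{\beta}\mathfrak{J}\in C_0$. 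The limitation you identify is real: the integrability condition $|\alpha|+|\beta|-2s<d-1$ caps the number of $(\xi,\eta)$-derivatives one can take, which is why the decay rate in \eqref{decay_der_J} carries the exponent $\gamma'\le s$ rather than being superpolynomial. So if you pursue your plan, you should be explicit about the largest $|\alpha|+|\beta|$ available and verify that this still suffices for the specific moments used in Lemmas~\ref{lemma:int_der_J} and~\ref{lemma:int_fLap_J}; as written, ``polynomial decay to the needed order'' is asserted rather than proved.
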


We use the Riemann-Lebesgue theorem to deduce explicit estimates for the decay at infinity of the kernel $\mathfrak{J}$. Indeed, for any pair of multi-indices $\alpha,\beta$ such that $|\alpha|+|\beta|=d-1$, one verifies that
\beqq
\Lambda(X^d)^{-1}|\partial^\alpha_{\xi}\partial^\beta_\eta\hat{\mathfrak{J}}(\xi,\eta;X^d)|\lesssim\int_0^1\frac{d\rho}{|\eta+\rho\xi|^{d-1-2s}}e^{-\sigma_0\int^1_0|\eta+\tau\xi|^{2s}d\tau},
\eeqq
which is integrable since Fubini's theorem implies the integrability of $(\rho,\eta,\xi)\mapsto \frac{e^{-\sigma_0\int^1_0|\eta+\tau\xi|^{2s}d\tau}}{|\eta+\rho\xi|^{d-1-2s}}$.
This yields (thanks to the Riemann-Lebesgue theorem) that $X'^\alpha V^\beta\mathfrak{J}(X',\cdot,V)\in C_0(\RR^{2(d-1)})$ and consequently, for some $\gamma>0$,
\beqq
\begin{aligned}
\Lambda(X^d)^{-1}\mathfrak{J}(X',X^d,V)&\lesssim \frac{1}{(1+|X'|^2+|V'|^2)^{\frac{d-1}{2}+\gamma}}\\
\end{aligned}
\eeqq
uniformly in $X^d>0$. 
By averaging $\mathfrak{J}$ with respect to either $X'$ or $V$ one deduces that $\gamma\leq s$. This follows from the fact 
\beqq
\begin{aligned}
&\mathfrak{F}_{V\to\eta}\left(\int\mathfrak{J}(X,V)dX'\right) = \exp\left(-\sigma_1(X^d)|\eta|^{2s}\right)\Lambda(X^d)\quad\text{and}\\
&\mathfrak{F}_{X'\to\xi}\left(\int\mathfrak{J}(X,V)dV\right) = \exp\left(-\sigma_2(X^d)|\xi|^{2s}\right)\Lambda(X^d),
\end{aligned}
\eeqq
where $\sigma_1(X^d):=\int^1_0\tilde{\sigma}(\tau X^d)d\tau$ and $\sigma_2(X^d) := \int^1_0\tau^{2s}\tilde{\sigma}(\tau X^d)d\tau$. Therefore, for some $C,C'>0$,
\beq\label{decay_avg_J}
\begin{aligned}
&\frac{C^{-1}\Lambda(X^d)}{\left(1+|V|^2\right)^{\frac{1}{2}(d-1 +2s)}}\leq \int\mathfrak{J}(X,V)dX' \leq\frac{C\Lambda(X^d)}{\left(1+|V|^2\right)^{\frac{1}{2}(d-1 +2s)}};\\
&\frac{C'^{-1}\Lambda(X^d)}{\left(1+|X'|^2\right)^{\frac{1}{2}(d-1 +2s)}}\leq \int\mathfrak{J}(X,V)dV' \leq\frac{C'\Lambda(X^d)}{\left(1+|X|^2\right)^{\frac{1}{2}(d-1 +2s)}}.
\end{aligned}
\eeq

In a similar fashion we can obtain upper bounds for the decay of the derivatives of $\mathfrak{J}$. For any $\alpha',\beta'$ multi-indices we see that
\beqq
\Lambda(X^d)^{-1}|\partial^\alpha_{\xi}\partial^\beta_\eta\big(\xi^{\alpha'}\eta^{\beta'}\hat{\mathfrak{J}}(\xi,\eta;X^d)\big)|\lesssim\int_0^1\frac{d\rho}{|\eta+\rho\xi|^{|\alpha|+|\beta|-|\alpha'|-|\beta'|-2s}}e^{-\sigma_0\int^t_0|\eta+\tau\xi|^{2s}d\tau},
\eeqq
uniformly in $X^d>0$, and the right hand side is still integrable for $|\alpha|+|\beta|=d-1+|\alpha'|+|\beta'|$ which yields the decay
\beq\label{decay_der_J}
\big|\partial_{X'}^{\alpha'}\partial_V^{\beta'}\mathfrak{J}(X,V)\big|\lesssim  \frac{\Lambda(X^d)}{(1+|X'|^2+|V|^2)^{\frac{d-1 +|\alpha'|+|\beta'|}{2}+\gamma'}},\quad\forall X^d>0,
\eeq
for some $0<\gamma'\leq s$.
We then obtain the next.
\begin{lemma}\label{lemma:int_der_J} For any nonnegative real numbers $m,n$, such that $m+n\leq |\alpha|+|\beta|$, with $\alpha,\beta$ multi-indices, 
\beqq
\begin{aligned}
\||X'|^m|V|^n\partial^\alpha_{X'}\partial^\beta_{V}J(X,V)\|_{L^1(\RR^{2(d-1)})} &= O((X^d)^{\left(1+\frac{1}{2s}\right)(m-|\alpha|) + \frac{1}{2s}(n-|\beta|)}\Lambda(X^d)).
\end{aligned}
\eeqq
\end{lemma}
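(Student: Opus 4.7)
The plan is to exploit the self-similar representation \eqref{self_similar_J} of $J$ to reduce the weighted $L^1$ bound to an $X^d$-independent integral involving the rescaled kernel $\mathfrak J$, and then invoke the pointwise decay of $\mathfrak J$ and its derivatives developed just above.

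Concretely, I would set $\tilde X' = X'/(X^d)^{1+\frac{1}{2s}}$ and $\tilde V = V/(X^d)^{\frac{1}{2s}}$. The chain rule applied to \eqref{self_similar_J} gives
\begin{equation*}
\partial^\alpha_{X'}\partial^\beta_V J(X,V) = c_{d-1}\,\Lambda(X^d)\,(X^d)^{-(d-1+\frac{d-1}{s})-(1+\frac{1}{2s})|\alpha|-\frac{|\beta|}{2s}}\,(\partial^\alpha_{\tilde X'}\partial^\beta_{\tilde V}\mathfrak J)(\tilde X',\tilde V;X^d).
\end{equation*}
Multiplying by $|X'|^m|V|^n = (X^d)^{(1+\frac{1}{2s})m+\frac{n}{2s}}|\tilde X'|^m|\tilde V|^n$ and changing variables $(X',V)\mapsto(\tilde X',\tilde V)$, whose Jacobian $(X^d)^{d-1+\frac{d-1}{s}}$ exactly cancels the self-similar prefactor, produces the announced power $(X^d)^{(1+\frac{1}{2s})(m-|\alpha|)+\frac{1}{2s}(n-|\beta|)}\Lambda(X^d)$ times the dimensionless integral
\begin{equation*}
I(X^d):=\int_{\RR^{2(d-1)}}|\tilde X'|^m|\tilde V|^n\,\bigl|(\partial^\alpha_{\tilde X'}\partial^\beta_{\tilde V}\mathfrak J)(\tilde X',\tilde V;X^d)\bigr|\,d\tilde X'\,d\tilde V.
\end{equation*}
It then suffices to show $\sup_{X^d>0}I(X^d)<\infty$.

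The main obstacle is precisely this uniform integrability. The pointwise decay \eqref{decay_der_J}, controlled by a single exponent $\gamma'\in(0,s]$, is only marginally useful on $\RR^{2(d-1)}$ and is not strong enough to absorb the polynomial weight in the critical case $m+n=|\alpha|+|\beta|$. My plan is to revisit the Riemann--Lebesgue argument that produced \eqref{decay_der_J}, applying it anisotropically by choosing Fourier-side multi-indices $\tilde\alpha,\tilde\beta$ of possibly different sizes and differentiating $\hat{\mathfrak J}$ along the $\xi$ and $\eta$ directions separately. This yields bounds of the form $|\partial^\alpha_{\tilde X'}\partial^\beta_{\tilde V}\mathfrak J|\lesssim(1+|\tilde X'|)^{-A}(1+|\tilde V|)^{-B}$ for any prescribed $A,B$, subject to the constraint that the resulting Fourier-side singularity of order $|\tilde\alpha|+|\tilde\beta|-|\alpha|-|\beta|-2s$ along the rays $\eta=-\rho\xi$ (of codimension $d-1$ in $\RR^{2(d-1)}$) remains integrable, i.e.\ $|\tilde\alpha|+|\tilde\beta|<d-1+|\alpha|+|\beta|+2s$. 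Choosing $A>m+(d-1)$ and $B>n+(d-1)$ with $A+B$ matching this window is possible precisely because $m+n\leq|\alpha|+|\beta|$, and it renders the weighted integrand pointwise integrable on $\RR^{2(d-1)}$. Uniformity in $X^d$ follows from the two-sided control $\sigma_0\leq\tilde\sigma\leq\sigma_0^{-1}$ in \eqref{cond_coeff}, which dominates $\hat{\mathfrak J}(\cdot;X^d)$ by its counterparts with constant coefficients and thus makes all decay constants independent of $X^d$.
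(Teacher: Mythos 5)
Your reduction via the self-similar form \eqref{self_similar_J} --- chain rule, insertion of the weight, change of variables with Jacobian $(X^d)^{d-1+\frac{d-1}{s}}$ cancelling the prefactor --- is exactly the paper's computation and yields the correct power of $X^d$ times an $X^d$-independent weighted integral of $|\partial^\alpha_{X'}\partial^\beta_V\mathfrak J|$; uniformity in $X^d$ via $\sigma_0\leq\tilde\sigma\leq\sigma_0^{-1}$ is also as in the paper. At that point the paper simply invokes \eqref{decay_der_J} and stops. You instead flag that \eqref{decay_der_J} is too weak, and your diagnosis is fair: in the critical case $m+n=|\alpha|+|\beta|$ the isotropic bound leaves an integrand of size $(1+|X'|^2+|V|^2)^{-\frac{d-1}{2}-\gamma'}$ with $\gamma'\leq s$, which is not integrable over $\RR^{2(d-1)}$ once $d\geq3$. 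So you have correctly located the delicate point of the lemma.

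The problem is that your repair does not close. You require $A>m+(d-1)$ and $B>n+(d-1)$, hence $A+B>m+n+2(d-1)$, while your Fourier-side window caps the total number of admissible derivatives by $|\tilde\alpha|+|\tilde\beta|<d-1+|\alpha|+|\beta|+2s$. Compatibility therefore forces $m+n<|\alpha|+|\beta|+2s-(d-1)$, which is \emph{strictly stronger} than the hypothesis $m+n\leq|\alpha|+|\beta|$ whenever $2s\leq d-1$, i.e.\ for every $d\geq3$ and for $d=2$ with $s\leq1/2$. In particular the critical case $m+n=|\alpha|+|\beta|$ --- the only case where anything new is being claimed --- is excluded, so the assertion that the window closes ``precisely because $m+n\leq|\alpha|+|\beta|$'' is an arithmetic error. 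The surplus decay genuinely available is anisotropic and small: as \eqref{decay_avg_J} and Remark \ref{rmk:sharp_decay_J} show, beyond the borderline exponent one only gains an order $2s<2$ in each of $X'$ and $V$ separately, which is why even the unweighted integrability of $\mathfrak J$ and its derivatives is delegated to the proposition quoted from \cite{IS} rather than derived from a global polynomial bound. A correct argument has to exploit that structure (e.g.\ sharp averaged bounds for the derivatives of $\mathfrak J$, in the spirit of \eqref{decay_avg_J}), not a free choice of product exponents $A,B$.
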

\begin{proof}
We use the self-similar form of $J$ to obtain
\begin{equation*}
\begin{aligned}
&\int_{\RR^{2(d-1)}}|X'|^m|V|^n |\partial^\alpha_{X'}\partial^\beta_{V}J(X,V) |dX'dV\\
&=\frac{c_{d-1}\Lambda(X^d)}{(X^d)^{d-1+\frac{d-1}{s}}}\int_{\RR^{2(d-1)}}|X'|^m|V|^n \Big|\partial^\alpha_{X'}\partial^\beta_{V}\left(\mathfrak{J}\left(\frac{X'}{(X^d)^{1+\frac{1}{2s}}},\frac{V}{(X^d)^{\frac{1}{2s}}}\right)\right)\Big|dX'dV\\
&=\frac{c_{d-1}\Lambda(X^d)}{(X^d)^{d-1+\frac{d-1}{s}+|\alpha|\left(1+\frac{1}{2s}\right)+|\beta|\frac{1}{2s}}}\int_{\RR^{2(d-1)}}|X'|^m|V|^n \Big|(\partial^\alpha_{X'}\partial^\beta_{V}\mathfrak{J})\left(\frac{X'}{(X^d)^{1+\frac{1}{2s}}},\frac{V}{(X^d)^{\frac{1}{2s}}}\right)\Big|dX'dV\\
\end{aligned}
\end{equation*}
By means of a change of variable and the decay estimate \eqref{decay_der_J} we conclude the proof by noticing that
\beqq
\begin{aligned}
&\int_{\RR^{2(d-1)}}|X'|^m|V|^n |\partial^\alpha_{X'}\partial^\beta_{V}J(X,V) |dX'dV\\
&=\frac{c_{d-1}(X^d)^{(d-1)\left(1+\frac{1}{s}\right) + m\left(1+\frac{1}{2s}\right) + n\frac{1}{2s}}}{(X^d)^{d-1+\frac{d-1}{s}+|\alpha|\left(1+\frac{1}{2s}\right)+|\beta|\frac{1}{2s}}}\Lambda(X^d)\int_{\RR^{2(d-1)}}|X'|^m|V|^n \Big|\left(\partial^\alpha_{X'}\partial^\beta_{V}\mathfrak{J}\right)\left(X,V\right)\Big|dX'dV\\
&\lesssim (X^d)^{(m-|\alpha|)\left(1+\frac{1}{2s}\right) + (n-|\beta|)\frac{1}{2s}}\Lambda(X^d).
\end{aligned}
\eeqq
\end{proof}
\begin{remark}\label{rmk:sharp_decay_J}
We can improve the above estimates for the cases with no differentiation involved. Using the sharp decay estimates for the averages of $\mathfrak{J}$ in \eqref{decay_avg_J}, we obtain that for any $m<2s$,
\beqq
\begin{aligned}
&\||X'|^m\mathfrak{J}(X,V)\|_{L^1(\RR^{2(d-1)})}+\||V|^m\mathfrak{J}(X,V)\|_{L^1(\RR^{2(d-1)})}<\infty,\quad\forall X^d>0.
\end{aligned}
\eeqq
This implies
\beqq
\begin{aligned}
&\||X'|^mJ(X,V)\|_{L^1(\RR^{2(d-1)})} = O((X^d)^{\left(1+\frac{1}{2s}\right)m}\Lambda(X^d)), &&\text{for }m<2s;\\
&\||V|^nJ(X,V)\|_{L^1(\RR^{2(d-1)})} = O((X^d)^{\frac{1}{2s}n}\Lambda(X^d)),&& \text{for }n<2s.
\end{aligned}
\eeqq
\end{remark}
\begin{remark}
If instead of considering the whole half-space $\RR^d_+\times\RR^{d-1}$ as domain of integration one restricts the estimates to a compact region, it is possible to obtain estimates for exponents $m\geq 2s$ and $n\geq 2s$. However the appearing constants will grow with the (transversal, i.e. in $X',V$) size of the domain, logarithmically for $m\text{ (or $n$) }=2s$ and polynomially for $m\text{ (or $n$) }>2s$.
\end{remark}
We will need the following decay estimates for the fractional Laplacian of $J$.
\begin{lemma}\label{lemma:int_fLap_J}  For any nonnegative real numbers $m,n$, such that $m,n<2s$, 
\beqq
\begin{aligned}
&\||X'|^m(-\Delta_V)^{s}J(X,V)\|_{L^1(\RR^{2(d-1)})} = O((X^d)^{\left(1+\frac{1}{2s}\right)m-1}\Lambda(X^d));\\
&\||V|^n(-\Delta_V)^{s}J(X,V)\|_{L^1(\RR^{2(d-1)})} = O((X^d)^{\frac{1}{2s}n -1}\Lambda(X^d)).
\end{aligned}
\eeqq
\end{lemma}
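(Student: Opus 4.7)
The plan is to follow the same strategy as in Lemma \ref{lemma:int_der_J} and Remark \ref{rmk:sharp_decay_J}: reduce both estimates to scale-invariant ones for $\mathfrak{J}$ via the self-similar representation \eqref{self_similar_J}, and then invoke sharp Fourier-side decay estimates. Since $(-\Delta_V)^s$ is a Fourier multiplier of order $2s$ in $V$, the ansatz \eqref{self_similar_J} yields
\beqq
(-\Delta_V)^s J(X,V) = \frac{c_{d-1}\Lambda(X^d)}{(X^d)^{d-1+\frac{d-1}{s}+1}}\bigl[(-\Delta_V)^s\mathfrak{J}\bigr]\left(\frac{X'}{(X^d)^{1+\frac{1}{2s}}},\frac{V}{(X^d)^{\frac{1}{2s}}}\right),
\eeqq
the extra $(X^d)^{-1}$ factor being exactly the order of the fractional Laplacian. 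Performing the same change of variables as in the proof of Lemma \ref{lemma:int_der_J} reduces the two estimates to showing that $\||X'|^m(-\Delta_V)^s\mathfrak{J}\|_{L^1(\RR^{2(d-1)})}$ and $\||V|^n(-\Delta_V)^s\mathfrak{J}\|_{L^1(\RR^{2(d-1)})}$ are finite whenever $m,n<2s$, the powers of $X^d$ stated in the lemma appearing automatically from the Jacobian.

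To control these $L^1$ norms I would derive sharp pointwise decay for $(-\Delta_V)^s\mathfrak{J}$ along the lines of the decay estimate \eqref{decay_der_J}. Its Fourier transform equals $|\eta|^{2s}\hat{\mathfrak{J}}(\xi,\eta)$, and by Leibniz
\beqq
\partial^\alpha_\xi\partial^\beta_\eta\bigl(|\eta|^{2s}\hat{\mathfrak{J}}\bigr)=\sum_{\beta'\leq\beta}\binom{\beta}{\beta'}\bigl(\partial^{\beta'}_\eta|\eta|^{2s}\bigr)\bigl(\partial^\alpha_\xi\partial^{\beta-\beta'}_\eta\hat{\mathfrak{J}}\bigr),
\eeqq
which, combined with $\bigl|\partial^{\beta'}_\eta|\eta|^{2s}\bigr|\lesssim |\eta|^{2s-|\beta'|}$ and the derivative bound on $\hat{\mathfrak{J}}$ already used in the paper, yields integrability in $(\xi,\eta)$ for a suitable range of $|\alpha|+|\beta|$. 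Riemann--Lebesgue then delivers a pointwise bound of the form $\bigl|(-\Delta_V)^s\mathfrak{J}(X',V)\bigr|\lesssim (1+|X'|^2+|V|^2)^{-(d-1+2s)/2-\gamma''}$ for some $\gamma''>0$, which when integrated against $|X'|^m$ or $|V|^n$ converges precisely for $m,n<2s$. An equivalent and more direct route, parallel to Remark \ref{rmk:sharp_decay_J}, is to start from the identity $\mathcal{F}_{V\to\eta}\bigl(\int(-\Delta_V)^s\mathfrak{J}(X',V)dX'\bigr)=|\eta|^{2s}\exp(-\sigma_1(X^d)|\eta|^{2s})$, whose inverse Fourier transform has the $|V|^{-(d-1+2s)}$-decay characteristic of a symbol with H\"older singularity of order $2s$ at the origin; the corresponding argument in $X'$ produces the same $2s$ threshold.

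The main obstacle is controlling the interaction between the new singular factor $|\eta|^{2s}$, which introduces $|\eta|^{2s-|\beta'|}$-type behavior upon differentiation at $\eta=0$, and the inherited singularity $|\eta+\rho\xi|^{-(|\alpha|+|\beta-\beta'|-2s)}$ that comes from differentiating $\hat{\mathfrak{J}}$ along the line $\eta=-\rho\xi$. The threshold $m,n<2s$ should correspond exactly to the fact that $|\eta|^{2s}\hat{\mathfrak{J}}$ admits roughly $2s$ fewer usable derivatives of decay than $\hat{\mathfrak{J}}$ itself, in direct analogy with how Remark \ref{rmk:sharp_decay_J} produces the same threshold for the unweighted kernel.
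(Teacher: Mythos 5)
Your self-similar reduction is exactly the paper's first step: pulling $(-\Delta_V)^s$ through the ansatz \eqref{self_similar_J} produces the extra $(X^d)^{-1}$ and reduces everything to the finiteness of $\||X'|^m(-\Delta_V)^s\mathfrak{J}\|_{L^1}$ and $\||V|^n(-\Delta_V)^s\mathfrak{J}\|_{L^1}$ for $m,n<2s$. From there, however, you diverge, and your two proposed routes each have a gap.

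Your ``more direct route'' via the marginal does not work because $(-\Delta_V)^s\mathfrak{J}$ changes sign. Knowing that $\mathcal{F}_{V\to\eta}\bigl(\int(-\Delta_V)^s\mathfrak{J}\,dX'\bigr)=|\eta|^{2s}e^{-\sigma_1|\eta|^{2s}}$ only controls $\int|V|^n\bigl|\int(-\Delta_V)^s\mathfrak{J}\,dX'\bigr|dV$, whereas what is needed is $\int|V|^n\int|(-\Delta_V)^s\mathfrak{J}|\,dX'dV$. The analogous step in Remark \ref{rmk:sharp_decay_J} leans crucially on $\mathfrak{J}\geq 0$, and that positivity is destroyed by $(-\Delta_V)^s$. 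Your primary route via Leibniz and Riemann--Lebesgue is also not carried through: the factor $\partial^{\beta'}_\eta|\eta|^{2s}\sim|\eta|^{2s-|\beta'|}$ is locally integrable over $\eta\in\RR^{d-1}$ only for $|\beta'|<d-1+2s$, so the number of usable derivatives of $|\eta|^{2s}\hat{\mathfrak{J}}$ is capped strictly below $d-1+2s$; this makes the pointwise bound $|(-\Delta_V)^s\mathfrak{J}|\lesssim(1+|X'|^2+|V|^2)^{-\frac{d-1+2s}{2}-\gamma''}$ with $\gamma''>0$ out of reach (and in fact too strong: it would give integrability against $|V|^n$ for $n<2s+2\gamma''$, exceeding the stated threshold). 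You correctly flag this interaction as ``the main obstacle'' but leave it as a heuristic rather than resolving it.

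The paper instead stays in physical space. It writes $(-\Delta_V)^s J$ via the singular-integral second-difference formula and splits the $z$-integration at $|z|=1$: for $|z|<1$ a Taylor expansion bounds the second difference by $|z|^2\sup|\nabla_V^2\mathfrak{J}|$ and one invokes the weighted $L^1$ bound for $\nabla_V^2\mathfrak{J}$ from Lemma \ref{lemma:int_der_J}; for $|z|>1$ the crude bound by $\mathfrak{J}$ itself suffices, using Remark \ref{rmk:sharp_decay_J}, with $\int_{|z|>1}|z|^{-(d-1+2s-n)}dz<\infty$ giving exactly the restriction $n<2s$. This avoids any Fourier-side bookkeeping of the $|\eta|^{2s}$ singularity and any sign issues, and is the argument you should adopt.
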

\begin{remark}
The constants in the estimates blow up as $m$ or $n$ approaches $2s$.
\end{remark}
\begin{proof}
We do the estimation involving powers of $|V|$, the other case is simpler and follows similarly.

Recall the following singular integral definition of the fractional Laplacian:
$$
(-\Delta_V)^{s}J = \frac{c_{d,s}}{2}\int_{\RR^{d-1}}\frac{2J(V)-J(V+z)-J(V-z)}{|z|^{d-1+2s}}dz.
$$
Then,
\begin{equation*}
\begin{aligned}
&\int_{\RR^{2(d-1)}} |V|^n |(-\Delta_V)^{s}J(X,V) |dX'dV\\
&\lesssim \int_{\RR^{2(d-1)}}|V|^n \int_{\RR^{d-1}}\frac{\left| 2J(V)-J(V+z)-J(V-z)\right|}{|z|^{d-1+2s}}dzdX'dV\\
&\lesssim (X^d)^{n/2s -1}\Lambda(X^d)\int_{\RR^{2(d-1)}}|V|^n \int_{\RR^{d-1}}\frac{\left|2\mathfrak{J}(V)-\mathfrak{J}(V+z)-\mathfrak{J}(V-z)\right|}{|z|^{d-1+2s}}dzdX'dV\\
\end{aligned}
\end{equation*}
We split the integral with respect to $z$ into two integrals for the respective regions $|z|<1$ and $|z|>1$. For the former, we see that
\beqq
\begin{aligned}
&\int_{\RR^{2(d-1)}}|V|^n \int_{|z|<1}\frac{\left|2\mathfrak{J}(V)-\mathfrak{J}(V+z)-\mathfrak{J}(V-z)\right|}{|z|^{d-1+2s}}dzdX'dV\\
&\lesssim \int_{\RR^{d-1}} \frac{1}{|z|^{d-1-2(1-s)}}\int^1_0\int_{\RR^{2(d-1)}}|V|^n|\nabla_V^2\mathfrak{J}(X,V+rz)|dX'dVdrdz\\
&\lesssim \int_{|z|<1} \frac{1}{|z|^{d-1-2(1-s)}}\int^1_0\int_{\RR^{2(d-1)}}|V-rz|^n|\nabla_V^2\mathfrak{J}(X,V)|dX'dVdrdz\\
&\lesssim \left(\int_{|z|<1} \frac{dz}{|z|^{d-1-2(1-s)}}\right)\int_{\RR^{2(d-1)}}(|V|^n+1)|\nabla_V^2\mathfrak{J}(X,V)|dX'dV <\infty,
\end{aligned}
\eeqq
where the finiteness of the last integral with respect to $(X',V)$ follows from the previous lemma. 

On the other hand, for $|z|>1$ and using Remark \ref{rmk:sharp_decay_J} we easily obtain that
\beqq
\begin{aligned}
&\int_{\RR^{2(d-1)}}|V|^n \int_{|z|>1}\frac{\left|2\mathfrak{J}(V)-\mathfrak{J}(V+z)-\mathfrak{J}(V-z)\right|}{|z|^{d-1+2s}}dzdX'dV\\
&\lesssim \left(\int_{|z|>1} \frac{dz}{|z|^{d-1+2s-n}}\right)\int_{\RR^{2(d-1)}}|V|^n\mathfrak{J}(X,V)dX'dV <\infty.
\end{aligned}
\eeqq
\end{proof}
\begin{remark}\label{rmk:gral_integ}
Similar estimates hold for more general solution to \eqref{fFPB} by means of the representation formula \eqref{sol_fFPB}. We indeed use this in the proof of Lemma \ref{lemma:ord1_approx}.
\end{remark}

\section{Approximation analysis for narrow beams}\label{sec:Fpb_approx}
\subsection{Fractional Fermi pencil-beam approximation}\label{subsec:fFpb_approx}
The next lemma is a crucial step in the proof of the pencil-beam approximation result.
\begin{lemma}\label{lemma:approx_fFpb}
Let $\psi\in C_c(Q)\cap \Lip_{\kappa}(Q)$ so that the backward fractional Fokker-Planck system,
\begin{equation}\label{backward_fFP}
- \theta\cdot\nabla_x\varphi + \lambda\varphi = \mathcal{I}_\theta(\varphi) + \psi.
\end{equation}
has a unique continuous strong solution $\varphi$. Let $U$ be the solution to the fractional Fermi pencil-beam system \eqref{fFPB} with 
$$F=0,\quad G=\delta(X')\delta(V),\quad \tilde{\sigma}=\sigma(0 ,X^d)\quad\text{and}\quad \tilde{\lambda}=\lambda(0 ,X^d),$$ 
and extended by zero to $X^d<0$.  (i.e. for $X^n>0$, $U$ coincides with the fundamental solution $J$ defined in the previous section). In dimension $d=2$ we take $s'\in(2s-1,s)$ and for $d\geq 3$ we choose $s'\in(0,s)$. Then, there exists $C_{s'}>0$ so that the rescaling
$$\mathfrak{u}(x,\theta) := (2\epsilon)^{-2(d-1)}U((2\epsilon)^{-1}x',x^d,\epsilon^{-1}\mathcal{S}(\theta))$$
satisfies
$$
\left|\int_Q \mathfrak{u}(x,\theta)\psi(x,\theta)dxd\theta -\varphi(0,N)\right|\leq C_{s'}\epsilon^{2s'}\kappa^{s'},
$$
where $N = (0,\dots,0,1)\in\SS^{d-1}$ and $C_{s'}\to\infty$ as $s'\to s$.
\end{lemma}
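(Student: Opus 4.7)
The strategy is a duality computation carried out in pencil-beam coordinates, followed by an explicit estimate of the residual using the integrability properties of the Fermi kernel $J$ developed in Section \ref{subsec:integ_fFpb}.

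First I would move to pencil-beam coordinates $(X,V)=((2\epsilon)^{-1}x',x^d,\epsilon^{-1}\mathcal{S}(\theta))$. With the Jacobian $dxd\theta=(2\epsilon)^{2(d-1)}\langle\epsilon V\rangle^{-2(d-1)}dXdV$,
\begin{equation*}
I:=\int_Q \mathfrak{u}\psi\,dxd\theta \;=\; \int_{\mathcal{Q}_+} U(X,V)\,\Psi(X,V)\,dXdV,\qquad \Psi(X,V):=\langle\epsilon V\rangle^{-2(d-1)}\psi\bigl(2\epsilon X',X^d,\mathcal{S}^{-1}(\epsilon V)\bigr),
\end{equation*}
and $\varphi(0,N)=\Phi(0,0,0)$, where $\Phi$ is the analogous rescaling of $\varphi$. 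Pulling back the backward equation \eqref{backward_fFP} through the change of variables, $\Phi$ satisfies a perturbed equation $\tilde{\mathcal{P}}'(\Phi)=\Psi$ in $\mathcal{Q}_+$, with $\tilde{\mathcal{P}}'$ differing from the backward Fermi operator $\mathcal{P}'$ in \eqref{b_fFpb} only through $\epsilon$-small corrections (these are exactly the terms that vanished in the formal derivation that produced \eqref{fFpb} from \eqref{fFP_id}). Combining Lemma \ref{lemma:IbyP} for the $\partial_{X^d}$-term with the self-adjointness of $V\cdot\nabla_{X'}$, $(-\Delta_V)^s$ and multiplication by $\tilde{\lambda}$, and recalling that $\mathcal{P}(U)=0$ in $\mathcal{Q}_+$ together with $U|_{X^d=0}=\delta(X')\delta(V)$, one obtains the duality
\begin{equation*}
\int_{\mathcal{Q}_+} U\,\mathcal{P}'(H)\,dXdV \;=\; H(0,0,0)\qquad\text{for admissible }H.
\end{equation*}
Taking $H=\Phi$ and using $\tilde{\mathcal{P}}'(\Phi)=\Psi$ produces the master identity
\begin{equation*}
I-\varphi(0,N)\;=\;\mathcal{E}\;:=\;\int_{\mathcal{Q}_+} U\,(\tilde{\mathcal{P}}'-\mathcal{P}')(\Phi)\,dXdV,
\end{equation*}
so the whole task reduces to bounding $\mathcal{E}$.

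The operator $\tilde{\mathcal{P}}'-\mathcal{P}'$ decomposes into three pieces: (a) a transport defect stemming from the coefficients $\tfrac{1-\epsilon^2|V|^2}{\langle\epsilon V\rangle^2}$ and $\tfrac{V}{\langle\epsilon V\rangle^2}$ replacing $1$ and $V$, contributing $O(\epsilon^2|V|^2)$ acting on first derivatives of $\Phi$; (b) an absorption defect $\lambda(2\epsilon X',X^d)-\tilde{\lambda}(X^d)=O(\epsilon|X'|)$ multiplied by $\Phi$; and (c) a scattering defect matching the transformed spherical singular integral $\epsilon^{2s}\sigma(2\epsilon X',X^d)\mathcal{I}_\theta$ (rewritten via $K_\mathcal{S}$) against $\tilde{\sigma}(X^d)(-\Delta_V)^s$. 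Each piece is estimated by pairing weighted moments of $U=J$ (Lemma \ref{lemma:int_der_J}, Remark \ref{rmk:sharp_decay_J}, Lemma \ref{lemma:int_fLap_J}) against suitable increments of $\Phi$. Direct differentiation yields only $\Lip(\Phi)\lesssim\epsilon\kappa$, so the natural bound would involve first-order moments $\||X'|J\|_{L^1}$ and $\||V|J\|_{L^1}$, which fail to be finite when $2s\le 1$. To bypass this I would interpolate between $\|\Phi\|_\infty\le 1$ (inherited from $\|\psi\|_\infty\le 1$ and Theorem \ref{thm:wellposedness_FP2}) and the Lipschitz bound, producing effective increments of order $(\kappa\epsilon|V|)^{2s'}$ and $(\kappa\epsilon|X'|)^{2s'}$ for any $s'<s$; these are paired against moments of $J$ that remain integrable precisely in that range (Remark \ref{rmk:sharp_decay_J}), at the price of a constant that diverges as $s'\to s$. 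This is the origin of the factor $\epsilon^{2s'}\kappa^{s'}$ and of the blow-up of $C_{s'}$ at $s'=s$.

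The technically most delicate contribution is the scattering residual (c). Here I would expand $\langle\epsilon V\rangle^{2(d-1)}K_\mathcal{S}(\epsilon V,\epsilon V')$ in powers of $\epsilon$ and absorb the variable-coefficient variation $\sigma(2\epsilon X',X^d)-\tilde{\sigma}(X^d)=O(\epsilon|X'|)$; the principal term in the expansion is $(-\Delta_V)^s\Phi$ and the remainder is controlled through the moments of $(-\Delta_V)^sJ$ in Lemma \ref{lemma:int_fLap_J}. The dimension-dependent restriction $s'>2s-1$ in $d=2$ arises exactly at this point, because the singularity $|v-v'|^{-(d-1+2s)}$ becomes $|v-v'|^{-1-2s}$ in dimension $d=2$ and tightens the admissible moment exponents in the kernel expansion; for $d\ge 3$ the margin is unconstrained beyond $s'<s$. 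Summing the bounds from (a)--(c) gives $|\mathcal{E}|\le C_{s'}\epsilon^{2s'}\kappa^{s'}$ with $C_{s'}\to\infty$ as $s'\to s$, which is the claimed estimate. The main obstacle is thus the combination in (c) of the algebraic expansion of the spherical scattering kernel in pencil-beam coordinates, the polynomial tails of $J$, and the Lipschitz interpolation of $\Phi$, all of which must be tuned simultaneously to obtain a single exponent $s'$ for both the $\epsilon$- and $\kappa$-scalings.
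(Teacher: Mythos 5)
Your high-level plan---duality in pencil-beam coordinates, integration by parts via Lemma~\ref{lemma:IbyP} to produce the $\Phi(0,0,0)$ boundary term, and estimation of the residual through weighted moments of $J$ combined with interpolation---matches the skeleton of the paper's proof. The decomposition into transport, absorption, and scattering defects corresponds to the paper's $E_1$ (inside $I_1$), $J_1$ (inside $I_2$), and $E_3,J_2,I_{3,2}+I_{3,3}$ (inside $I_3$), plus the $c_{d,s}$-term $I_4$.

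However, there is a genuine gap in how you handle the coefficient-variation pieces. You assert that ``direct differentiation yields $\Lip(\Phi)\lesssim\epsilon\kappa$,'' but this is not available. The function $\Phi$ is the pencil-beam rescaling of $\varphi$, and $\varphi$ is a solution to the backward fractional Fokker--Planck equation with a $\kappa$-Lipschitz source $\psi$; the regularity theory established in the paper (Theorem on Regularity of solutions, part~2) gives only local H\"older continuity of $\varphi$ for bounded sources, with no control of a Lipschitz constant in terms of $\Lip(\psi)$. Consequently the interpolation you propose---pairing increments of $\Phi$ of order $(\epsilon\kappa|X'|)^{2s'}$ or $(\epsilon\kappa|V|)^{2s'}$ against moments of $J$---cannot be carried out on $\Phi$ itself, and the absorption defect $\int(\lambda(2\epsilon X',X^d)-\tilde\lambda)\,U\,\Phi$ and its scattering analogue stall.

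The paper circumvents this with two ingredients that your plan omits. First, Lemma~\ref{lemma:ord1_approx} (the sub-optimal approximation for the adjoint equation) shows that one may replace $\Phi$ by $W$, the solution of the backward fractional Fermi pencil-beam equation \eqref{b_fFpb} with the rescaled source $\Psi$, at a cost of only $O(\epsilon^{\min\{1,2s'\}})$; and unlike $\Phi$, this $W$ \emph{is} Lipschitz with $\Lip(W)\lesssim\Lip(\Psi)\lesssim\epsilon\kappa$, thanks to the explicit Fourier representation exploited in Lemma~\ref{lemma:FPb_Lip}. Second, Lemma~\ref{lemma:radial_sym} (vanishing of the first $X'$-moment and of $\int(-\Delta_V)^s U\,dV$) permits subtracting $W(0,V)$ (resp.~$W(X,0)$) for free, so that the Lipschitz gain of $W$ can actually be brought to bear without introducing non-integrable moments of $J$; this is what delivers the factor $\kappa^{s'}$ and lets the argument close for $s>1/2$. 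Without these two lemmas the interpolation strategy alone does not establish the estimate, so the proposal as written is incomplete.
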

\begin{proof}
We define $\Psi$ and $\Phi$ as the following rescaling of $\psi$ and $\varphi$ respectively:
\beq\label{rescaling_test_functions}
\varphi(x,\theta) = \Phi((2\epsilon)^{-1}x',x^d,\epsilon^{-1}\mathcal{S}(\theta))\quad\text{and}\quad \psi(x,\theta) = \Psi((2\epsilon)^{-1}x',x^d,\epsilon^{-1}\mathcal{S}(\theta)),
\eeq
thus $\|\Phi\|_\infty=\|\varphi\|_\infty\lesssim \|\psi\|_\infty=\|\Psi\|_\infty$. 
Then,
$$
\begin{aligned}
\int_{Q}\mathfrak{u}\psi dxd\theta &=  \int_{Q}\mathfrak{u}\left(-\theta \cdot\nabla_x\varphi+ \lambda \varphi -\cI_\theta(\varphi)\right)dxd\theta \\
&=  \int_{Q}\mathfrak{u}\left(-\theta\cdot\nabla_x \varphi +\lambda \varphi +\textstyle\epsilon^{2s}\sigma(x)\Big((-\Delta_\theta)^s\varphi - c_{d,s}\varphi\Big)\right)dxd\theta\\
&=:I_1+I_2+I_3 + I_4.
\end{aligned}
$$
The proof consists in showing that
$$
I_1+I_2+I_3 + I_4 = \Phi(0,0,0) + \int \underbrace{\mathcal{P}(U)}_{=0}\frac{\Phi}{\leVr^{2(d-1)}}dXdV + O(\epsilon^{2s'}\kappa^{s'}),
$$
where we recall $\mathcal{P}$ is the fractional Fermi pencil-beam operator defined in \eqref{fFpb}. \\

\noindent{\it Estimation of $I_1$.} The advection component $I_1=\textstyle -\int_Q\fu\big(\theta\cdot\nabla_x\varphi\big)dxd\theta$ is computed as follows. A change of variables gives us
$$
\begin{aligned}
I_1&= - \int_{\RR^{d}_+\times\RR^{d-1}} \Big[\Big(\frac{V\cdot \nabla_{X'} \Phi}{1+\epsilon^2|V|^2}\Big)U + \Big(\frac{1-\epsilon^2|V|^2}{1+\epsilon^2|V|^2}\Big)(\partial_{X^d}\Phi)U \Big]\frac{dXdV}{\leVr^{2(d-1)}},
\end{aligned}
$$
where the integration by parts formula from Lemma \ref{lemma:IbyP} and the decay of $U$ at infinity lead to
$$
\begin{aligned}
I_1= &
 \int_{\RR^{d}_+\times\RR^{d-1}} \Big[\Big(\frac{V\cdot \nabla_{X'} U}{1+\epsilon^2|V|^2}\Big)\Phi + \Big(\frac{1-\epsilon^2|V|^2}{1+\epsilon^2|V|^2}\Big)(\partial_{X^d}U)\Phi \Big]\frac{dXdV}{\leVr^{2(d-1)}}\\
&+\Phi(0,0,0)\\
=& \int_{\RR^{d}_+\times\RR^{d-1}}(V\cdot\nabla_{X'}U+\partial_{X^d}U)\frac{\Phi}{\leVr^{2(d-1)}} dXdV +\Phi(0,0,0)+E_1(\Phi).
\end{aligned}
$$
The error term is given by
$$
\begin{aligned}
E_1 &= -2\epsilon^2\int_{\RR^{d}_+\times\RR^{d-1}} {\textstyle\frac{|V|^2}{\leVr^{2d}}}\big(V\cdot \nabla_{X'}U +\partial_{X^d}U\big)\Phi dXdV + \epsilon^2\int_{\RR^{d}_+\times\RR^{d-1}} \textstyle\frac{|V|^2}{\leVr^{2d}}\big(V\cdot \nabla_{X'}U\big)\Phi dXdV.
\end{aligned}
$$

The next simple inequality is used extensively in subsequent estimations in order to reduce the powers of $|V|$ and obtain integrability:
\beq\label{s'_ineq}
\frac{\epsilon^2|V|^2}{\leVr^{2m}}\leq \epsilon^{2s'}|V|^{2s'}\frac{(\epsilon^2|V|^2)^{1-s'}}{(1+\epsilon^2|V|^2)^{1-s'}}\leq \epsilon^{2s'}|V|^{2s'},\quad\text{for any }s'\in(0,s)\text{ and }m\geq1-s'.
\eeq
Then,
$$
\begin{aligned}
|E_1| &\lesssim \epsilon^{2s'}\int_0^\infty\big(\||V|^{2s'}(-\Delta_V)^{s}U\|_{L^1(\RR^{2(d-1)}}\\
&\hspace{5em}+\||V|^{2s'} V\cdot \nabla_{X'}U\|_{L^1(\RR^{2(d-1)}}+\||V|^{2s'}U\|_{L^1(\RR^{2(d-1)}}\big)dX^d,
\end{aligned}
$$
where the integrals on the right hand side are finite according to Lemmas \ref{lemma:int_der_J} and \ref{lemma:int_fLap_J}.\\

\noindent{\it Estimation of $I_2$.} Similarly, for $I_2 = \textstyle \int_{Q}\lambda\mathfrak{u}\varphi dxd\theta$ we obtain
\begin{equation}\label{defJ1}
\begin{aligned}
I_2=& \int_{\RR^d_+\times\RR^{d-1}}\tilde{\lambda} U\frac{\Phi}{\leVr^{2(d-1)}} dXdV
+\underbrace{\int_{\RR^d_+\times\RR^{d-1}} \big(\lambda(\epsilon X',X^d) - \lambda(0 ,X^d)\big)U\frac{\Phi}{\leVr^{2(d-1)}} dXdV}_{=:J_1(\Phi)}\\
=&  \int_{\RR^d_+\times\RR^{d-1}}\tilde{\lambda} U\frac{\Phi}{\leVr^{2(d-1)}} dXdV  
+ J_1(\Phi),
\end{aligned}
\end{equation}
Let us skip for a moment the estimation of $J_1(\Phi)$ and move on to $I_3$ and $I_4$.\\ 

\noindent{\it Estimation of $I_4$.} We see that
$$
I_4 = - {\textstyle\epsilon^{2s}\sigma(x)c_{d,s}}\int_{Q}\fu\varphi dxd\theta=- {\textstyle\epsilon^{2s}\sigma(x)c_{d,s}}\int_{Q}U\frac{\Phi}{\leVr^{2(d-1)}}dXdV,
$$
thus, from the the explicit dependence on $\epsilon$ we easily verify that $|I_4|\lesssim \epsilon^{2s}\|\psi\|_\infty$. 

\noindent{\it Estimation of $I_3$.} Recall $I_3= \textstyle\epsilon^{2s}\sigma(x)\int_{Q}\fu(-\Delta_\theta)^s\varphi dxd\theta$. We drop for a moment the dependence in $X'$ and by abusing notation write $\sigma$ instead of $\sigma(2\epsilon X',X^d)$. We have
$$
\begin{aligned}
I_3 = & \int_{\RR^d_+\times\RR^{d-1}}\int_{\RR^{d-1}}\frac{U(V)}{\leVr^{d-1-2s}}\Big(\frac{\Phi(V)}{\leVr^{d-1-2s}}-\frac{\Phi(V')}{\leVpr^{d-1-2s}}\Big)\frac{\sigma\;dV'dXdV}{2^{2s}|V-V'|^{d-1+2s}}\\
= & \int_{\RR^d_+\times\RR^{d-1}}\int_{\RR^{d-1}}\frac{\Phi(V)}{\leVr^{d-1-2s}}\Big(\frac{U(V)}{\leVr^{d-1-2s}}-\frac{U(V')}{\leVpr^{d-1-2s}}\Big)\frac{\sigma\;dV'dXdV}{2^{2s}|V-V'|^{d-1+2s}}\\
=& \frac{1}{2^{1+2s}} \int_{\RR^d_+\times\RR^{d-1}}\int_{\RR^{d-1}}\frac{\Phi(V)}{\leVr^{d-1-2s}}\Big(\frac{2U(V)}{\leVr^{d-1-2s}}-\frac{U(V+z)}{\langle \epsilon(V+z)\rangle^{d-1-2s}}-\frac{U(V-z)}{\langle \epsilon(V-z)\rangle^{d-1-2s}}\Big)\frac{\sigma\;dzdXdV}{|z|^{d-1+2s}}\\
=& \frac{1}{2^{1+2s}} \int_{\RR^d_+\times\RR^{d-1}}\int_{\RR^{d-1}}\frac{\Phi(V)}{\leVr^{2(d-1-2s)}}\frac{\big(2U(V)-U(V+z)-U(X,V-z)\big)}{|z|^{d-1+2s}}\sigma dzdXdV\\
 &+\frac{1}{2^{1+2s}} \int_{\RR^d_+\times\RR^{d-1}}\int_{\RR^{d-1}}\frac{\Phi(V)}{\leVr^{d-1-2s}}\left(\frac{1}{\leVr^{d-1-2s}}-\frac{1}{\langle \epsilon(V+z)\rangle^{d-1-2s}}\right)\frac{U(V+z)}{|z|^{d-1+2s}}\sigma\;dzdXdV\\
  &+\frac{1}{2^{1+2s}} \int_{\RR^d_+\times\RR^{d-1}}\int_{\RR^{d-1}}\frac{\Phi(V)}{\leVr^{d-1-2s}}\left(\frac{1}{\leVr^{d-1-2s}}-\frac{1}{\langle \epsilon(V-z)\rangle^{d-1-2s}}\right)\frac{U(V-z)}{|z|^{d-1+2s}}\sigma\;dzdXdV\\
  =:& I_{3,1}+I_{3,2}+I_{3,3}.
\end{aligned}
$$
The first integral gives
\begin{equation}\label{I_31}
I_{3,1} = \int \tilde{\sigma}(X^d)\frac{\Phi}{\leVr^{2(d-1)}}(-\Delta_V)^sU dXdV + E_{3}(\Phi) + J_2(\Phi),
\end{equation}
with error terms
$$
\begin{aligned}
E_3 =& \int_{\RR^d_+\times\RR^{d-1}}\frac{\Phi}{\leVr^{2(d-1)}} (-\Delta_V)^sU\left(\leVr^{4s}-1\right)\frac{1}{2^{2s}}\sigma(2\epsilon X',X^d)\;dXdV,
\end{aligned}
$$
and
\begin{equation}\label{defJ2}
\begin{aligned}
J_2 =& \frac{1}{2^{2s}}\int_{\RR^d_+\times\RR^{2(d-1)}}\big(\sigma(2\epsilon X',X^d)-\sigma(0,X^d)\big)\frac{\Phi}{\leVr^{2(d-1)}}(-\Delta_V)^sU dX dV.
\end{aligned}
\end{equation}
On the other hand,
$$
\begin{aligned}
I_{3,2}+I_{3,3}&= \frac{1}{2^{1+2s}} \int_{\RR^d_+\times\RR^{d-1}}\int_{\RR^{d-1}}\sigma(2\epsilon X',X^d)\frac{\Phi(X,V)}{\leVr^{d-1-2s}}\\
&\hspace{3em} \times\left(\frac{2}{\leVr^{d-1-2s}}-\frac{1}{\langle \epsilon(V+z)\rangle^{d-1-2s}}-\frac{1}{\langle \epsilon(V-z)\rangle^{d-1-2s}}\right)\frac{U(V+z)}{|z|^{d-1+2s}}\;dzdXdV\\
&\quad +\frac{1}{2^{1+2s}} \int_{\RR^d_+\times\RR^{d-1}}\int_{\RR^{d-1}}\sigma(2\epsilon X',X^d)\frac{\Phi(X,V)}{\leVr^{d-1-2s}}\\
&\hspace{3em}\times \left(\frac{1}{\leVr^{d-1-2s}}-\frac{1}{\langle \epsilon(V-z)\rangle^{d-1-2s}}\right)\frac{(U(V-z) - U(V+z))}{|z|^{d-1+2s}}\;dzdXdV.
\end{aligned}
$$
By denoting $h(z) = \frac{1}{\langle \epsilon(V+z)\rangle^{d-1-2s}}$, the above simplifies to
$$
\begin{aligned}
&I_{3,2}+I_{3,3} \\
&= \frac{1}{2^{1+2s}} \int_{\RR^d_+\times\RR^{d-1}}\int_{\RR^{d-1}}\sigma(2\epsilon X',X^d)\frac{\Phi(X,V)}{\leVr^{d-1-2s}} \frac{\left(2h(0)-h(z)-h(-z)\right)}{|z|^{d-1+2s}}U(V+z)\;dzdXdV\\
&\quad +\frac{1}{2^{1+2s}} \int_{\RR^d_+\times\RR^{d-1}}\int_{\RR^{d-1}}\sigma(2\epsilon X',X^d)\frac{\Phi(X,V)}{\leVr^{d-1-2s}}\left(h(0)-h(-z)\right)\frac{(U(V-z) - U(V+z))}{|z|^{d-1+2s}}\;dzdXdV.
\end{aligned}
$$
To continue with the estimation we split the each integrals into two, for the respective regions $|z|>1$ and $|z|\leq 1$. For the former we use that
$$
\nabla_zh(z)=\frac{(d-1-2s)\epsilon^2(V+z)}{\langle \epsilon(V+z)\rangle^{d+1-2s}}$$
therefore
\begin{equation*}\label{der1_h}
|h(0)-h(z)|\leq C\epsilon^2|z|(|V\pm z|+|z|), \quad\forall |z|>1,\, V\in\RR^{d-1},
\end{equation*}
which directly implies
$$
|2h(0)-h(z)-h(-z)|\leq|h(0)-h(z)|+|h(0)-h(-z)|\leq  C\epsilon^2|z|(|V+z|+|z|).
$$
Since $|h|\leq1$ we then have that for any $s'\in(0,s)$,
$$|h(0)-h(z)|^{s'+(1-s')}\leq C|h(0)-h(z)|^{s'}\leq C\epsilon^{2s'}(|V\pm z|^{2s'}+|z|^{2s'}),$$
and similarly for $|2h(0)-h(z)-h(-z)|$. 
The above gives us that on the region $|z|>1$,
$$
|I_{3,2}+I_{3,3}|\leq C\epsilon^{2s'}\left(\textstyle \int_{|z|>1}\frac{1}{|z|^{d-1+2(s-s')}}dz\right)\|\sigma\|_\infty\|\Phi\|_{\infty}\|(1+|V|)U\|_{L^1}\leq C\epsilon^{2s'}\|\psi\|_\infty.
$$
For the second part, $|z|\leq 1$, we compute
$$
\nabla_z^2h(z) = \frac{(d-1-2s)\epsilon^2 I}{\langle \epsilon(V+z)\rangle^{d+1-2s}} + \frac{(d-1-2s)(d+1-2s)\epsilon^4(V+z)^t(V+z)}{\langle \epsilon(V+z)\rangle^{d+3-2s}},
$$
thus, from the inequality $\frac{\epsilon^2|V+z|^2}{\langle \epsilon(V+z)\rangle^{d+3-2s}}\leq 1$ it follows
\begin{equation}\label{der2_h}
|2h(0)-h(z)-h(-z)|\leq C\epsilon^2|z|^2,\quad\forall |z|\leq 1,\, V\in\RR^{d-1},
\end{equation}
and this leads to the estimate
$$
\begin{aligned}
& \int_{\RR^d_+\times\RR^{d-1}}\int_{\RR^{d-1}}\sigma(2\epsilon X',X^d)\frac{\Phi(X,V)}{\leVr^{d-1-2s}} \frac{\left(2h(0)-h(z)-h(-z)\right)}{|z|^{d-1+2s}}U(V+z)\;dzdXdV\\
&\quad \leq C\epsilon^2\|\sigma\|_\infty\|\Phi\|_\infty\|U\|_{L^1}\left(\textstyle \int_{|z|<1}\frac{1}{|z|^{d-1 - 2(1-s)}}dz\right)\leq C\epsilon^2\|\psi\|_\infty.
\end{aligned}
$$
On the other hand, we notice that
$$
h(0)-h(-z) = \int^1_0z\cdot\nabla h(tz)dt = (d-1-2s)\epsilon^2 \int^1_0\frac{z\cdot(V-tz)}{\langle \epsilon(V-tz)\rangle^{d+1-2s}}dt.
$$
Thus
$$
\begin{aligned}
&\int_{\RR^d_+\times\RR^{d-1}}\int_{\RR^{d-1}}\sigma(2\epsilon X',X^d)\frac{\Phi(X,V)}{\leVr^{d-1-2s}}\left(h(0)-h(-z)\right)\frac{(U(V-z) - U(V+z))}{|z|^{d-1+2s}}\;dzdXdV\\
&\quad = (d-1-2s)\epsilon^2 \int\int^1_0\sigma(2\epsilon X',X^d)\frac{\Phi(X,V)}{\leVr^{d-1-2s}}\frac{z\cdot(V-tz)}{\langle \epsilon(V-tz)\rangle^{d+1-2s}}\frac{(U(V-z) - U(V+z))}{|z|^{d-1+2s}}\;dtdzdXdV\\
&\quad = C\epsilon^2 \int\int^1_0\frac{\sigma(2\epsilon X',X^d)\Phi(X,V)}{\leVr^{d-1-2s}\langle \epsilon(V-tz)\rangle^{d+1-2s}}\frac{z\cdot(U(V-z)V - U(V+z)V)}{|z|^{d-1+2s}}\;dtdzdXdV+ O(\epsilon^2),
\end{aligned}
$$
where the error terms is given by
$$
\begin{aligned}
&C\epsilon^2 \int\frac{\sigma(2\epsilon X',X^d)\Phi(X,V)}{\leVr^{d-1-2s}}\left(\int^1_0\frac{tdt}{\langle \epsilon(V-tz)\rangle^{d+1-2s}}\right)\frac{(U(V-z) - U(V+z))}{|z|^{d-1-2(1-s)}}\;dzdXdV \\
&\hspace{3em}\leq C\epsilon^2\|\sigma\|_\infty\|\Phi\|_\infty\|U\|_{L^1}\Big(\textstyle\int_{|z|<1}\frac{1}{|z|^{d-1-2(1-s)}}dz\Big).
\end{aligned}
$$
For the remaining integral we see that
$$
z\cdot(U(V-z)V - U(V+z)V) = \big(U(V-z)z\cdot(V-z) - U(V+z)z\cdot(V+z)\big) + |z|^2\big(U(V-z)+U(V+z)\big).
$$
Thus, for any $\hat{z} = z/|z|$, with $0<|z|<1$, by defining the function
$$
g_{\hat{z}}(V) = (\hat{z}\cdot V)U(V),
$$
we have that 
$$
\begin{aligned}
&\int\int^1_0\frac{\sigma(2\epsilon X',X^d)\Phi(X,V)}{\leVr^{d-1-2s}\langle \epsilon(V-tz)\rangle^{d+1-2s}}\frac{z\cdot(U(V-z)V - U(V+z)V)}{|z|^{d-1+2s}}\;dtdzdXdV\\
&\hspace{3em} = \int\int^1_0\frac{\sigma(2\epsilon X',X^d)\Phi(X,V)}{\leVr^{d-1-2s}\langle \epsilon(V-tz)\rangle^{d+1-2s}}\left(\frac{g_{\hat{z}}(V-z) - g_{\hat{z}}(V+z)}{|z|}\right)\frac{1}{|z|^{d-1-2(1-s)}}\;dtdzdXdV\\
&\hspace{4em} + \int\int^1_0\frac{\sigma(2\epsilon X',X^d)\Phi(X,V)}{\leVr^{d-1-2s}\langle \epsilon(V-tz)\rangle^{d+1-2s}}\left(U(V-z)+U(V+z)\right)\frac{1}{|z|^{d-1-2(1-s)}}\;dtdzdXdV,
\end{aligned}
$$
and hence, denoting by $D^{z}_Vg_{\hat{z}}:=\frac{g_{\hat{z}}(V+z) - g_{\hat{z}}(V)}{|z|}$ the difference quotient of $g_{\hat{z}}(V)$, the integral is bounded by
$$
\begin{aligned}
& C\|\sigma\|_\infty\|\Phi\|_\infty\left(\int \left(\int |D^{z}_Vg_{\hat{z}}(V)| dVdX\right)\frac{1}{|z|^{d-1-2(1-s)}}dz + \|U\|_{L^1}\Big(\textstyle\int_{|z|<1}\frac{1}{|z|^{d-1-2(1-s)}}dz\Big)\right)\\
&\quad \leq C\|\sigma\|_\infty\|\Phi\|_\infty\left(\sup_{\hat{w}\in \SS^{d-2}}\Big(\sup_{|z|<1}\|D^{z}_Vg_{\hat{w}}(V)\|_{L^1}\Big) + \|U\|_{L^1}\right)\Big(\textstyle\int_{|z|<1}\frac{1}{|z|^{d-1-2(1-s)}}dz\Big)\\
&\quad\leq C\|\sigma\|_\infty\|\Phi\|_\infty\left(\||V|\nabla_V U\|_{L^1} + \|U\|_{L^1}\right),
 \end{aligned}
$$
where the last inequality follows from \cite[\S 5.8.2]{Evans}, while the boundedness of the $L^1$-norms of $U$ are due to Lemma \ref{lemma:int_der_J}. This concludes the proof that $|I_{3,2} + I_{3,3}|\leq C\epsilon^{2s'}\|\psi\|_\infty$. 

We still need to estimate the error terms of $I_{3,1}$ (defined in \eqref{I_31}). 
By simply noticing that $|\leVr^{4s}-1|\lesssim \epsilon^2|V|^2\leVr^{2(2s-1)}$, then
$$
|E_3|\lesssim \epsilon^2\|\sigma\|_\infty \int_{\RR^d_+\times\RR^{d-1}}\frac{|\Phi||V|^2}{\leVr^{2(d-2s)}} |(-\Delta_V)^sU|\;dXdV.
$$
Therefore, we get an $O(\epsilon^{2s'})$ upper bound by means of the inequality \eqref{s'_ineq} with $m=d-2s$ 
and the integrability Lemma \ref{lemma:int_fLap_J}. 
Notice that for \eqref{s'_ineq} to hold we need $d-2s\geq1-s'$, or equivalently $s'\geq2s+1-d$. For $d=2$ this translates into $s'\geq 2s-1$, while for $d\geq 3$ it is always satisfied. This explains the hypothesis imposed in the statement of the theorem.

We deduce $|E_3|\lesssim\epsilon^{2s'}\|\psi\|_\infty$ with constant depending on $\int_{\RR_+}\||V|^{2s'}(-\Delta_V)^sU\|_{L^1(\RR^{2(d-1)})}dX^d$.\\

\noindent{\it Estimation of $J_1$ and $J_2$.}  Let us now estimate $J_1$ and $J_2$ (defined respectively in \eqref{defJ1} and \eqref{defJ2}). 
We write $J_1$ and $J_2$ as follows,
\beqq
\begin{aligned}
J_1 =&  \frac{1}{2}\int_{\RR^d_+\times\RR^{d-1}} \big(\lambda(\epsilon X',X^d) - \lambda(0 ,X^d)\big)U(X,V)\Phi(X,V)dXdV + O(\epsilon^{2s'}).
\end{aligned}
\eeqq
with a reminder depending on $\||V|^{2s'}U(X,V)\|_{L^1}$, 
and
$$
\begin{aligned}
J_2 =&  \frac{1}{2^{2s}}\int_{\RR^d_+\times\RR^{d-1}}\big(\sigma(2\epsilon X',X^d)-\sigma(0,X^d)\big)\Phi(X,V)\big[(-\Delta_V)^sU(X,V)\big]dV dX + O(\epsilon^{2s'}),
\end{aligned}
$$
with a reminder depending on $ \||V|^{2s'}(-\Delta_V)^sU(X,V)\|_{L^1}$. To proceed we need the next lemma whose proof is postponed until the end of this section.

\begin{lemma}[Sub-optimal approximation for the adjoint equation] \label{lemma:ord1_approx}
Let $\varphi(x,\theta) = \Phi((2\epsilon)^{-1}x',x^d,\epsilon^{-1}\mathcal{S}(\theta))$ and $\psi(x,\theta) = \Psi((2\epsilon)^{-1}x',x^d,\epsilon^{-1}\mathcal{S}(\theta))$ with $\varphi$ solution to the backward fractional Fokker--Planck equation
$$-\theta\cdot\nabla_x\varphi + \lambda\varphi = \mathcal{I}_\theta[\varphi] + \psi,\quad \psi \in C_c(Q).$$
Let $W$ be the unique solution to the backward fractional Fermi pencil-beam equation \eqref{b_fFpb} with source $\Psi$. Then, for any $f$ absolutely integrable function in $\RR^d_+\times\RR^{d-1}$ and for any $s'\in (0,s)$, there exists $C>0$ (depending on $\sigma,\lambda$ and $\|f\|_{L^1}$) such that
\begin{equation*}
\Big|\int_{\RR^d_+\times\RR^{d-1}} f\Phi dXdV - \int_{\RR^d_+\times\RR^{d-1}} f WdXdV \Big|\leq C\epsilon^{\min\{1,2s'\}}\|\Psi\|_\infty.
\end{equation*}
with $C=C_{s'}$ blowing up to infinity as $s'\to s$ for $s<1/2$, and independent of $s'$ otherwise.
\end{lemma}

In view of the previous lemma we get
\beqq
\begin{aligned}
J_1 =&  \frac{1}{2}\int_{\RR^d_+\times\RR^{d-1}} \big(\lambda(\epsilon X',X^d) - \lambda(0 ,X^d)\big) W(X,V)U(X,V)dXdV + O(\epsilon^{2s'}),
\end{aligned}
\eeqq
and
\beqq
\begin{aligned}
J_2 =& \frac{1}{2^{2s}} \int_{\RR^d_+\times\RR^{d-1}}\big(\sigma(2\epsilon X',X^d)-\sigma(0,X^d)\big)W(X,V)\big[(-\Delta_V)^sU(X,V)\big]dV dX + O(\epsilon^{2s'}),
\end{aligned}
\eeqq
where we used the Lipschitz property of $\lambda$ and $\sigma$ to obtain the $O(\epsilon^2)$-error when interchanging $\Phi$ with $W$. In order to utilize the Lipschitz continuity of $W$ we employ Lemma \ref{lemma:radial_sym}. We first split the computations into two cases. For $s\leq1/2$  we do not need Lemma \ref{lemma:radial_sym} and simply obtain
\beqq
\begin{aligned}
|J_1| \lesssim& \; \epsilon^{2s'}\|\lambda\|_{C^1}^{2s'}\|\lambda\|_\infty^{1-2s'}\|\psi\|_\infty^{1-2s'}\||X|^{2s'}U(X,V)\|_{L^1} + C\epsilon^{2s'},
\end{aligned}
\eeqq
for some $s'\in(0,s)$. Otherwise, for $s\in(1/2,1)$, the lemma allows us to write
\beqq
\begin{aligned}
J_1 =&  \frac{\epsilon}{2}\int_{\RR^d_+\times\RR^{d-1}} \nabla_{X'}\lambda(0,X^n)\cdot X' \left(W(X,V) - W(0,V)\right)U(X,V)dXdV + O(\epsilon^{2s'}).
\end{aligned}
\eeqq
from which we deduce
\beqq
|J_1|\lesssim \epsilon^{1+r}\|\psi\|_\infty^{1-r}\kappa^{r}\|\lambda\|_{C^1}\||X'|^{1+r}U(X,V)\|_{L^1} + C\epsilon^{2s'},
\eeqq
for $r=2s'-1$ and $s'\in(1/2,s)$. In both cases, we end up with an estimate of the form $|J_1|\lesssim \epsilon^{2s'}\kappa^{s'}$.

Regarding $J_2$, Lemma \ref{lemma:radial_sym} yields
\beqq
\begin{aligned}
J_2 =& \frac{1}{2^{2s}}\int_{\RR^d_+\times\RR^{d-1}}\big(\sigma(2\epsilon X',X^d)-\sigma(0,X^d)\big)\\
&\hspace{5em}\times\big(W(X,V) - W(X,0)\big)\big[(-\Delta_V)^sU(X,V)\big]dV dX + O(\epsilon^{2s'}).
\end{aligned}
\eeqq
Similarly as above, we reduce the exponent by interpolating upper bounds and obtain 
\beqq
\begin{aligned}
|J_2| \leq  \frac{1}{2^{2s}}\|\sigma\|_{\infty}^{1-s'}\|\psi\|_{\infty}^{1-s'}& \int_{\RR^d_+\times\RR^{d-1}}\big|\sigma(2\epsilon X',X^d)-\sigma(0,X^d)\big|^{s'}\\
&\times\big|W(X,V) - W(X,0)\big|^{s'}\big|(-\Delta_V)^sU(X,V)\big|dV dX + O(\epsilon^{2s'}),
\end{aligned}
\eeqq
which then gives rise to
$$
\begin{aligned}
|J_2| &\leq C\epsilon^{2s'}\|\sigma\|_{\infty}^{1-s'}\|\sigma\|^{s'}_{C^1}\|\psi\|_{\infty}^{1-s'}\kappa^{s'} \int_{\RR^d_+\times\RR^{d-1}}|X'|^{s'}|V|^{s'}\big|(-\Delta_V)^sU(X,V)\big|dV dX + O(\epsilon^{2s'}),
\end{aligned}
$$
and consequently $|J_2| \leq C\epsilon^{2s'}\kappa^{s'}$. The positive constant depends on 
\beqq
\begin{aligned}
\||X'|^{2s'}(-\Delta_V)^sU(X,V)\|_{L^1}\quad\text{and}\quad  \||V|^{2s'}(-\Delta_V)^sU(X,V)\|_{L^1}.
\end{aligned}
\eeqq
which are finite thanks to Lemma \ref{lemma:int_fLap_J}.
\end{proof}

\begin{theorem}\label{thm:fFP_vs_pb}
Let $\delta\lesssim \epsilon^2\kappa$ and $f\in L^\infty_{x,\theta}\cap L^1_{x,\theta}$ be a compactly supported $\delta$-approximation to the identity $\delta_{0}(x)\delta_{N}(\theta)$, this is $\left|\int f\varphi dxd\theta - \varphi(0,N)\right|\lesssim \delta$ for all $\varphi\in C(Q)$. We let $u(x,\theta)$ be the solution to the fractional Fokker--Planck equation \eqref{fFP_diff} (or equivalently \eqref{fFP_id}-\eqref{fFP_kernel}). 
For any $s'\in(2s-1,s)$ in dimension $d=2$, and $s'\in(0,s)$ for $d\geq 3$, there exists  a constant $C=C(s',d,\lambda,\sigma)>0$, blowing up as $s'\to s$, such that
$$
\mathcal{W}^1_{\kappa}(\mathfrak{u},u)\leq C\epsilon^{2s'}\kappa^{s'}.
$$
\end{theorem}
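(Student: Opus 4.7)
The plan is a duality argument that reduces the Wasserstein estimate to the pointwise approximation already established in Lemma \ref{lemma:approx_fFpb}. Fix an arbitrary test function $\psi \in BL_{1,\kappa}(Q)$. The first step is to introduce $\varphi$, the unique continuous strong solution to the backward fractional Fokker--Planck equation
$$-\theta \cdot \nabla_x \varphi + \lambda \varphi = \mathcal{I}_\theta(\varphi) + \psi.$$
Existence, uniqueness, the $L^\infty$ bound $\|\varphi\|_\infty \leq \lambda_0^{-1}\|\psi\|_\infty \leq \lambda_0^{-1}$, and continuity all follow from the adjoint versions of Theorems \ref{thm:wellposedness_FP} and \ref{thm:wellposedness_FP2} together with the regularity results of Section \ref{sec:sRTE_fFP}, so $\varphi$ satisfies the hypotheses of Lemma \ref{lemma:approx_fFpb}.

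The second step is to derive the duality identity
$$\int_Q u\,\psi \, dxd\theta = \int_Q f\,\varphi \, dxd\theta$$
by testing the weak formulations of the forward and backward equations against $\varphi$ and $u$ respectively, exploiting the symmetry of $\mathcal{B}$, the self-adjointness of $(-\Delta_\theta)^s$, and the integration-by-parts identity \eqref{ibp_formula}. Applying hypothesis iii' with test function $\varphi$ then gives
$$\left| \int_Q f\,\varphi \, dxd\theta - \varphi(0,N) \right| \lesssim \delta \lesssim \epsilon^{2s}\kappa,$$
while Lemma \ref{lemma:approx_fFpb} applied to the same pair $(\psi,\varphi)$ yields
$$\left| \int_Q \mathfrak{u}\,\psi \, dxd\theta - \varphi(0,N) \right| \leq C_{s'}\, \epsilon^{2s'}\kappa^{s'}.$$
Combining via the triangle inequality, and using that the $\delta$-error is absorbed into $\epsilon^{2s'}\kappa^{s'}$ in the small-$\epsilon$ regime of interest, I would conclude $\bigl|\int_Q (u - \mathfrak{u})\psi \, dxd\theta\bigr| \lesssim \epsilon^{2s'}\kappa^{s'}$. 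Taking the supremum over $\psi \in BL_{1,\kappa}$ then delivers the bound on $\mathcal{W}^1_\kappa(\mathfrak{u},u)$.

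The main technical point to address is the rigorous justification of the duality identity: this requires density of smooth compactly supported functions in $\mathcal{Y}^s_\epsilon$ for both the forward and backward problems, so that $u$ and $\varphi$ are legitimate mutual test functions, and care with decay at infinity---a situation that parallels the symmetric integration by parts already handled in the proof of Theorem \ref{thm:wellposedness_FP}. A secondary subtlety is the propagation of the bounds on $\psi \in BL_{1,\kappa}$ to $\varphi$ and the correct tracking of the implicit norm dependence hidden in hypothesis iii', which is precisely why the scaling $\delta \lesssim \epsilon^{2s}\kappa$ has been calibrated to match the error rate produced by Lemma \ref{lemma:approx_fFpb}. Aside from these verifications, the proof is essentially a repackaging of the hard analysis already developed in Lemmas \ref{lemma:approx_fFpb} and \ref{lemma:ord1_approx}.
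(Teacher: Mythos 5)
Your proof proposal is correct and matches the paper's argument essentially verbatim: the paper also tests against an arbitrary $\psi\in C_c(Q)\cap\Lip_\kappa(Q)$, introduces the adjoint solution $\varphi$ of \eqref{backward_fFP}, uses the duality identity $\int u\psi\,dxd\theta=\int f\varphi\,dxd\theta$, and splits $\bigl|\int(\mathfrak{u}-u)\psi\bigr|$ via the triangle inequality through $\varphi(0,N)$, bounding the two pieces with hypothesis iii' and Lemma \ref{lemma:approx_fFpb} respectively. The additional remarks you make (justifying the duality via \eqref{ibp_formula} and the density of $C^\infty_c$ in $\mathcal{Y}^s_\epsilon$, and tracking the $L^\infty$ and Lipschitz bounds from $\psi$ to $\varphi$ so that Lemma \ref{lemma:approx_fFpb} is applicable) are exactly the points left implicit in the paper's terse proof.
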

\begin{proof}
For an arbitrary $\psi\in C_c(Q)\cap \Lip_{\kappa}(Q)$ let $\varphi$ be the unique solution to \eqref{backward_fFP}. We see that
\begin{equation*}
\begin{aligned}
\left| \int (\mathfrak{u}-u)\psi dxd\theta \right|&=\left| \int \mathfrak{u}\psi dxd\theta - \int f\varphi dxd\theta\right|\\
&\leq \left| \int \mathfrak{u}\psi dxd\theta -\varphi(0,N)\right|+\left|\varphi(0,N) - \int f\varphi dxd\theta\right|.
\end{aligned}
\end{equation*}
The proof follows from the hypothesis on the source $f$ and Lemma \ref{lemma:approx_fFpb}.
\end{proof}

\begin{proof}[Proof of Lemma \ref{lemma:ord1_approx}]
We let $\bfU$ be the solution to the inhomogeneous fractional Fermi pencil-beam equation $\mathcal{P}(\bfU) = f$ with boundary conditions $\bfU|_{X^n=0}=0$, and let $\bfu$ be its rescaling 
\beqq
\bfu(x,\theta) = (2\epsilon)^{-2(d-1)}\bfU((2\epsilon)^{-1}x',x^d,\epsilon^{-1}\mathcal{S}(\theta)).
\eeqq
One verifies that
$$
\int_{\mathcal{Q}_+} f WdXdV = \int_{\mathcal{Q}_+} \bfU \Psi dXdV,
$$
and hence a change variables leads to
$$
\int_{\mathcal{Q}_+} f WdXdV = \int_{Q_+} \bfu \psi dxd\theta + R(\epsilon^{2s'}),
$$
with the reminder term given explicitly by 
$$
R(\epsilon^{2s'})=\int_{\mathcal{Q}} \bfU\Psi\left(1-\leVr^{-(d-1)}\right)dXdV \leq \epsilon^{2s'}\big(\|\bfU\|_{L^1(\RR^d\times B_1)} + \||V|^{2s'}\bfU\|\big)_{L^1(\RR^d\times B_1^c)}\|\Psi\|_\infty.
$$
The proof then reduces to showing that
\begin{equation*}
\Big|\int_{Q_+} \bfu\psi dxd\theta - \int_{\mathcal{Q}_+} f \Phi dXdV \Big|\leq C\epsilon^{\min\{1,2s'\}}\|\Psi\|_\infty,
\end{equation*}
for any $s'\in(0,1)$, with the constant depending on this choice.

We proceed as in the proof of Lemma \ref{lemma:approx_fFpb} using the following decomposition
$$
\begin{aligned}
\int_{Q_+}\bfu\psi dxd\theta &=  \int_{\mathcal{Q}_+}\bfu\left(-\theta \cdot\nabla_x\varphi+ \lambda \varphi -\cI_\theta(\varphi)\right)dxd\theta \\
&=:I_1+I_2+I_3 + I_4,
\end{aligned}
$$
where the term $I_j$ are defined as in Lemma \ref{lemma:approx_fFpb} with $U$ replaced by $\bfU$. The objective is to show that
\begin{equation}\label{sum_Ij}
\begin{aligned}
I_1+I_2+I_3 + I_4 &= \int_{\RR^{d-1}\times\RR^{d-1}}\underbrace{\bfU(X',0,V)}_0\frac{\Phi(X',0,V)}{\leVr^{2(d-1)}}dX'dV\\
&\hspace{10em} + \int_{\mathcal{Q}_+} \underbrace{\mathcal{P}(\bfU)}_{=f}\Phi dXdV + O(\epsilon^{\min\{1,2s'\}}).
\end{aligned}
\end{equation}
Most of the estimates performed in the proof of Lemma \ref{lemma:approx_fFpb} remain identical, although with upper bounds now depending on various integrals of $\bfU$ (which are guaranteed to be finite by virtue of Lemma \ref{lemma:int_der_J} and the convolution formula for pencil-beam solutions \eqref{sol_fFPB}). These are subsequently bounded by the $L^1$-norm of $f$ (see \S\ref{subsec:integ_fFpb} and Remark \ref{rmk:gral_integ}). 
The main difference is that we are interested here in obtaining a sub-optimal accuracy for the error, namely $O(\epsilon^{\min\{1,2s'\}})$, so that Lipschitz-continuity of $\sigma$ and $\lambda$ is enough to deduce \eqref{sum_Ij}.
\end{proof}

\subsection{Ballistic approximation}\label{subsec:ballistic_approx}
We denote by $v$ the solution to the {\em ballistic transport equation}
\beq\label{ballistic_eq}
\theta\cdot\nabla_x v +\lambda(x) v = f(x,\theta),
\eeq
which, for a source $f\in L^1(\RR^{d}\times\SS^{d-1})$, has the explicit form
\beq\label{ballistic_sol}
v(x,\theta) =\mathcal{L}[f](x,\theta):= \int^{\infty}_0 e^{-\int^t_0\lambda(x-s\theta)ds}f(x-t\theta,\theta)dt.
\eeq
For a point source $f = \delta_0(x)\delta_{N}(\theta)$ the solution is defined in the distributional sense as
\beq\label{ballistic_sol_dist}
\langle v,\psi\rangle = \int^\infty_0e^{-\int^t_0\lambda(\vec{0},s)ds}\psi(tN,N)dt,\quad \forall \psi \in C(\RR^{d}\times\SS^{d-1}),
\eeq
where $\vec{0}$ stands for the origin in $\RR^{d-1}$. 

We now compare the order of approximation between the ballistic solution and the pencil-beam approximation introduced in the previous section. We obtain a lower and upper bound for their 1-Wasserstein distance for different choices of parameters.

\begin{theorem}\label{thm:ball_vs_pb}
There exists a constant $c>0$ (independent of $\epsilon$ and $\kappa$) such that
$$
c\epsilon\kappa\leq \mathcal{W}^1_{\kappa}(v,\mathfrak{u})\quad \text{for}\quad1\leq\kappa\leq\epsilon^{-1}\quad\text{and} \quad c\leq \mathcal{W}^1_{\kappa}(v,\mathfrak{u})\quad  \text{for}\quad \kappa>\epsilon^{-1}.
$$
Moreover, there exist constants $C_{s'}>0$ and $C_{s}>0$, with the former blowing up to infinity as $s'\in(0,s)$ approaches $s$, and the latter blowing up to infinity as $s \searrow\frac{1}{2}$, such that 
\beqq
\mathcal{W}^1_{\kappa}(v,\mathfrak{u})\leq C_{s'}(\epsilon\kappa)^{2s'}\quad \text{if}\quad s\in(0,1/2]\quad\text{and}\quad \mathcal{W}^1_{\kappa}(v,\mathfrak{u})\leq C_s\epsilon\kappa\quad \text{if}\quad s\in(1/2,1).
\eeqq
\end{theorem}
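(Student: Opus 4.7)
The strategy is to bound $|\langle \mathfrak{u}-v,\psi\rangle|$ uniformly in $\psi\in BL_{1,\kappa}$ for the upper bound, and to exhibit one explicit test function realizing the lower bound. After passing to pencil-beam coordinates $X=((2\epsilon)^{-1}x',x^d)$, $V=\epsilon^{-1}\mathcal{S}(\theta)$, and using $\int U(X,V)\,dX'dV=\Lambda(X^d)$ (property (2) of the fundamental solution) together with the distributional identity \eqref{ballistic_sol_dist}, the contribution of $\psi$ at the reference point $(0, X^d, N)$ on the ray cancels $\langle v,\psi\rangle$ exactly, leaving $\langle \mathfrak{u}-v,\psi\rangle=A+B$ with
\begin{equation*}
A=\int U(X,V)\bigl[\psi(2\epsilon X',X^d,\mathcal{S}^{-1}(\epsilon V))-\psi(0,X^d,N)\bigr]\frac{dX\,dV}{\langle\epsilon V\rangle^{2(d-1)}},
\end{equation*}
\begin{equation*}
B=\int U(X,V)\,\psi(0,X^d,N)\Bigl[\langle\epsilon V\rangle^{-2(d-1)}-1\Bigr]\,dX\,dV.
\end{equation*}

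For the upper bound, combining $\|\psi\|_\infty\leq 1$, $\Lip(\psi)\leq\kappa$, and the elementary estimates $|\mathcal{S}^{-1}(\epsilon V)-N|\asymp\min(\epsilon|V|,1)$ and $|\langle\epsilon V\rangle^{-2(d-1)}-1|\lesssim\min(\epsilon^2|V|^2,1)$, yield the pointwise bounds
\begin{equation*}
\bigl|\psi(2\epsilon X',X^d,\mathcal{S}^{-1}(\epsilon V))-\psi(0,X^d,N)\bigr|\lesssim\min\bigl(\kappa\epsilon(|X'|+|V|),1\bigr)
\end{equation*}
and $|\langle\epsilon V\rangle^{-2(d-1)}-1|\lesssim\min(\epsilon|V|,1)$. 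For $s\in(1/2,1)$ the first moments $\int U|X'|dXdV$ and $\int U|V|dXdV$ are finite by Remark \ref{rmk:sharp_decay_J} (since $1<2s$), with constants blowing up as $s\to 1/2^+$; dropping the truncation on the minima then gives $|A|+|B|\leq C_s\epsilon\kappa$. For $s\in(0,1/2]$ those first moments fail, so I interpolate via $\min(a,1)\leq a^{2s'}$ (valid for $a\geq 0$ and $2s'\leq 1$) and obtain
\begin{equation*}
|A|\lesssim(\kappa\epsilon)^{2s'}\!\!\int U(|X'|^{2s'}+|V|^{2s'})\,dXdV,\qquad |B|\lesssim\epsilon^{2s'}\!\!\int U|V|^{2s'}\,dXdV,
\end{equation*}
both finite for any $s'<s$ (again by Remark \ref{rmk:sharp_decay_J}), with constants diverging as $s'\to s^-$. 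Since $\kappa\geq 1$, the combined bound is $C_{s'}(\epsilon\kappa)^{2s'}$. Taking the supremum over $\psi\in BL_{1,\kappa}$ gives the claimed upper bounds.

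For the lower bound, I exhibit the test function
\begin{equation*}
\psi_0(x,\theta)=\tfrac{1}{2}\chi(x^d)\min\bigl(1,\kappa\,d_{\SS^{d-1}}(\theta,N)\bigr),
\end{equation*}
with $\chi\geq 0$ a $1$-Lipschitz bump supported in $[1,2]$; one checks $\psi_0\in BL_{1,\kappa}$. Since $\theta=N$ on the support of $v$, $\langle v,\psi_0\rangle=0$. Using again $d_{\SS^{d-1}}(\mathcal{S}^{-1}(\epsilon V),N)\asymp\min(\epsilon|V|,1)$,
\begin{equation*}
\langle\mathfrak{u},\psi_0\rangle\gtrsim\int U(X,V)\chi(X^d)\min(1,\kappa\epsilon|V|)\,dXdV.
\end{equation*}
When $\kappa\epsilon\leq 1$ the minimum equals $\kappa\epsilon|V|$ on $\{|V|\leq 1\}$, so the right-hand side is at least $\kappa\epsilon\int_{|V|\leq 1}\chi(X^d)U(X,V)|V|\,dXdV$, a positive constant times $\kappa\epsilon$ by positivity and non-degeneracy of $\mathfrak{J}$ (property (3)) on the compact set $X^d\in[1,2]$, $|V|\leq 1$. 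When $\kappa\epsilon> 1$ the minimum equals $1$ on $\{|V|\geq 1\}$, and the same lower bound argument produces a fixed positive constant independent of $\epsilon$ and $\kappa$.

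The principal obstacle is the regime $s\leq 1/2$, where neither $|X'|U$ nor $|V|U$ is globally integrable (Remark \ref{rmk:sharp_decay_J} admits only $m<2s$). The interpolation $\min(a,1)\leq a^{2s'}$ with $s'<s$ recovers integrability at the price of a weaker exponent and of constants that blow up as $s'\to s^-$, explaining the form of the upper bound and the failure at the endpoint.
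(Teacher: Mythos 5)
Your proof is correct. The upper bound follows the paper's own route: after rescaling, the difference is written as the $U$-weighted integral of $\Psi(X',X^d,V)-\Psi(\vec 0,X^d,\vec 0)$ plus a Jacobian correction, the first moments $\||(X',V)|J\|_{L^1}$ handle $s>1/2$, and the interpolation to the exponent $2s'<2s\le 1$ handles $s\le 1/2$, with exactly the stated blow-up of constants. The lower bound is where you genuinely diverge. The paper tests against the purely spatial function $\psi(x,\theta)=e^{-\kappa|x'|}$ and works on the Fourier side, comparing $\hat\Psi_1$ evaluated along the ray with $\hat U(\xi,t,0)=e^{-\tau(t)|\xi|^{2s}}\Lambda(t)$ and then bounding $\int(1-e^{-\tau(t)|\xi|^{2s}})(4\pi^2\epsilon^2\kappa^2+|\xi|^2)^{-d/2}d\xi$ from below; this detects the transverse \emph{spatial} spreading of the beam. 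You instead test against an angular distance function that vanishes identically on the ballistic ray, so $\langle v,\psi_0\rangle=0$ and only the \emph{angular} spreading of $\mathfrak{u}$ must be bounded below; this avoids the Fourier computation entirely and yields the same rates in both regimes $\epsilon\kappa\le 1$ and $\epsilon\kappa>1$. The trade-off is that the paper's choice shows the spatial marginals alone are already $\gtrsim\epsilon\kappa$ apart, a slightly stronger piece of information. One small point to tighten in your argument: property (3) gives only $\mathfrak{J}\ge 0$, which does not by itself make $\int_{\{X^d\in[1,2],\,|V|\le 1\}}\chi(X^d)J\,|V|\,dX\,dV$ strictly positive. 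You should invoke the two-sided bound \eqref{decay_avg_J}, whose lower bound gives $\int J(X',X^d,V)\,dX'>0$ for every $V$ and $X^d>0$ (or, alternatively, combine $\int J\,dX'\,dV=\Lambda(X^d)$ with the uniform polynomial decay of $\mathfrak{J}$ to see that a fixed fraction of the mass lies in a fixed compact set), so that the integral over your chosen compact region is a genuine positive constant independent of $\epsilon$ and $\kappa$.
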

\begin{proof}
\noindent{\em Upper bound.} Let $\psi$ be an arbitrary test function in $BL_{1,\kappa}(\RR^{d}\times\SS^{d-1})$ and $\Psi$ its rescaling according to \eqref{rescaling_test_functions}, which is Lipschitz with constant $O(\epsilon\kappa)$. For $s'\in(0,s)$ we have that
\beqq
\begin{aligned}
\langle v-\mathfrak{u},\psi\rangle &= \int^\infty_0e^{-\int^t_0\lambda(\vec{0},s)ds}\Psi(\vec{0},t,\vec{0})dt -\int_0^\infty\int_{\RR^{2(d-1)}}U(X',t,V)\Psi(X',t,V) \frac{dX'dVdt}{\leVr^{2(d-1)}} \\
&= \int^\infty_0\left(e^{-\int^t_0\lambda(\vec{0},s)ds}\Psi(\vec{0},t,\vec{0}) -\int_0^\infty\int_{\RR^{2(d-1)}}U(X',t,V)\Psi(X',t,V) dX'dV\right)dt + O(\epsilon^{2s'}).
\end{aligned}
\eeqq
The expression inside parentheses is equivalent to
\beqq
\int_{\RR^{2(d-1)}}J(X',t,V)\left(\Psi(\vec{0},t,\vec{0}) -
\Psi(X',t,V) \right)dX'dV,
\eeqq
with $J$ the fundamental solution to the fractional Fermi pencil-beam of Section \ref{subsec:integ_fFpb}. 
There are two scenarios depending on the values of the exponent $s$. For $s>1/2$, we take $s'=1/2$ above and get
\beqq
\langle v-\mathfrak{u},\psi\rangle\lesssim \epsilon\kappa\int^\infty_0 e^{-\int^t_0\lambda(\vec{0},r)dr}\||(X',V)|J(X',t,V)\|_{L^1(\RR^{2(d-1)})}dt=O(\epsilon\kappa).
\eeqq
On the other hand, for $s\leq1/2$, we interpolate the Lipschitz and the $L^\infty$ bound of the difference $\Psi(\vec{0},t,\vec{0}) -
\Psi(X',t,V)$ to get
\beqq
\langle v-\mathfrak{u},\psi\rangle\lesssim (\epsilon\kappa)^{2s'}\int^\infty_0 e^{-\int^t_0\lambda(\vec{0},r)dr}\||(X',V)|^{2s'}J(X',t,V)\|_{L^1(\RR^{2(d-1)})}dt=O((\epsilon\kappa)^{2s'}).
\eeqq
for any $s'\in(0,s)$ with the constant in the estimate blowing up as $s'$ approaches $s$.

The desired estimates follow after taking supremum with respect to $\psi$.\\

\noindent{\em Lower bound.} Let us choose a specific test function: $\psi(x,\theta)=\psi_1(x')= e^{-\kappa|x'|}$.
When passing to stretched coordinate we denote $\Psi_1(X') = e^{-2\epsilon\kappa|X'|}$ 
and therefore $\Psi(X,V)=\Psi_1(X')$. The Fourier Transform of $\Psi_1$ is given by 
$$\hat{\Psi}_1(\xi):=\mathfrak{F}_{X'\to\xi}[\Psi_1](\xi) = c_{d}\frac{\epsilon\kappa}{\left(4\pi^2\epsilon^2\kappa^2+|\xi|^2\right)^{\frac{d}{2}}},$$
for some constant $c_d>0$. 
We find that
\beq\label{ball_test}
\begin{aligned}
\langle v,\psi\rangle &= \int^\infty_0e^{-\int^t_0\lambda(\vec{0},s)ds}\Psi(\vec{0},t,\vec{0})dt =  \int^\infty_0e^{-\int^t_0\lambda(\vec{0},s)ds}\int_{\RR^{d-1}}\hat{\Psi}_1(\xi)d\xi dt\\
&=c_d\epsilon\kappa\int^\infty_0e^{-\int^t_0\lambda(\vec{0},s)ds}\int_{\RR^{d-1}}\frac{1}{\left(4\pi^2\epsilon^2\kappa^2+|\xi|^2\right)^{\frac{d}{2}}}d\xi dt.
\end{aligned}
\eeq
On the other hand,
$$
\begin{aligned}
\langle \mathfrak{u},\psi\rangle &= \int_0^\infty\int_{\RR^{2(d-1)}}U(X',t,V)\Psi(X',t,V) \frac{dX'dVdt}{\leVr^{2(d-1)}} \\
&\leq \int_0^\infty\int_{\RR^{2(d-1)}}U(X',t,V)\Psi_1(X') dX'dVdt. 
\end{aligned}
$$
Then, the integration with respect to $V$ and the Plancherel's identity yield
\beq\label{pb_test}
\begin{aligned}
\langle \mathfrak{u},\psi\rangle&\leq \int_0^\infty\int_{\RR^{d-1}}\hat{U}(\xi,t,0)\hat{\Psi}_1(\xi,t) d\xi dt\\
&\leq c_d\epsilon\kappa\int_0^\infty e^{-\int^t_0\lambda(\vec{0},s)ds}\int_{\RR^{d-1}}\frac{e^{-\tau(t)|\xi|^{2s}}}{\left(4\pi^2\epsilon^2\kappa^2+|\xi|^2\right)^{\frac{d}{2}}}d\xi dt,
\end{aligned}
\eeq
where $\tau(t) := \int^t_0r^{2s}\tilde{\sigma}(r)dr$ ($\approx t^{2s+1}$). 
Let us compare this to \eqref{ball_test}. We get
$$
\begin{aligned}
\langle v-\mathfrak{u},\psi\rangle \geq c_d\epsilon\kappa \int_0^\infty e^{-\int^t_0\lambda(\vec{0},s)ds}\int_{\RR^{d-1}}\frac{\big(1-e^{-\tau(t)|\xi|^{2s}}\big)}{\left(4\pi^2\epsilon^2\kappa^2+|\xi|^2\right)^{\frac{d}{2}}}d\xi dt > 0.
\end{aligned}
$$
To simplify notation, we consider a generic constant $c>0$ independent of $\epsilon$ and $\kappa$ so that once passing to spherical coordinates we get
$$
\begin{aligned}
\langle v-\mathfrak{u},\psi\rangle &\geq c\epsilon\kappa\int_0^\infty e^{-\int^t_0\lambda(\vec{0},s)ds}\int_{0}^\infty \frac{\big(1-e^{-\tau(t)r^{2s}}\big)}{\left(4\pi^2\epsilon^2\kappa^2+r^2\right)^{\frac{d}{2}}}r^{d-2}drdt.
\end{aligned}
$$
We now analyze the integral $I(t) := \epsilon\kappa\int_{0}^\infty \frac{\big(1-e^{-\tau(t)r^{2s}}\big)}{\left(4\pi^2\epsilon^2\kappa^2+r^2\right)^{\frac{d}{2}}}r^{d-2}dr$. The term inside the parentheses is bounded from below by
$$
\begin{aligned}
1-e^{-\tau(t)r^{2s}}&=2s\tau(t)\int^r_0\rho^{2s-1}e^{-\tau(t)\rho^{2s}}d\rho \\
&\geq 2s\tau(t)e^{-\tau(t)r^{2s}}\int^r_0\rho^{2s-1}d\rho\\
&= \tau(t)e^{-\tau(t)r^{2s}}r^{2s}
\end{aligned}
$$
which leads to
$$
I(t) \geq \epsilon\kappa\tau(t)\int_{0}^\infty \frac{e^{-\tau(t)r^{2s}}r^{d-2+2s}}{\left(4\pi^2\epsilon^2\kappa^2+r^2\right)^{\frac{d}{2}}}dr.
$$
We then get
$$
I(t) \geq \epsilon\kappa\tau(t)e^{-\tau(t)}\int_{0}^1 \frac{r^{d-2(1-s)}}{\left(4\pi^2(\epsilon\kappa)^2+1\right)^{\frac{d}{2}}}dr\geq c\epsilon\kappa\left(\frac{1}{\left((2\pi\epsilon\kappa)^2+1\right)^{\frac{d}{2}}}\right)\tau(t)e^{-\tau(t)}.
$$
Consequently, under the resolution restriction $\kappa\leq \epsilon^{-1}$, we obtain the estimate
$$
\langle v-\mathfrak{u},\psi\rangle\geq c\epsilon\kappa\int^\infty_0\tau(t)e^{-\tau(t)}e^{-\int^t_0\lambda(\vec{0},r)dr}dt.
$$
Let us now consider the case $\kappa>\epsilon^{-1}$. We can use a different lower bound, namely
$$
\begin{aligned}
\epsilon\kappa\int_{0}^\infty \frac{e^{-\tau(t)r^{2s}}r^{d-2+2s}}{\left(4\pi^2\epsilon^2\kappa^2+r^2\right)^{\frac{d}{2}}}dr
&\geq \frac{\epsilon\kappa e^{-\tau(t)(\epsilon\kappa)^{2s}}}{(\epsilon\kappa)^{d}\left(4\pi^2+1\right)^{\frac{d}{2}}}\int_{0}^{\epsilon\kappa} r^{d-2+2s}dr\geq c(\epsilon\kappa)^{2s}e^{-\tau(t)(\epsilon\kappa)^{2s}},
\end{aligned}
$$
thus
$$
\langle v-\mathfrak{u},\psi\rangle\geq c(\epsilon\kappa)^{2s}\int^\infty_0\tau(t)e^{-\tau(t)(\epsilon\kappa)^{2s}}e^{-\int^t_0\lambda(\vec{0},s)ds}dt \gtrsim (\epsilon\kappa)^{2s}\int^\infty_0t^{2s+1}e^{-\alpha t^{2s+1}(\epsilon\kappa)^{2s}}e^{-\beta t}dt
$$
for some $\alpha,\beta>0$ depending on $\|\sigma\|_\infty$ and $\|\lambda\|_{\infty}$. Performing the change of variables $\rho = t(\epsilon\kappa)^{2s/(2s+1)}$, we get
\beqq
\begin{aligned}
(\epsilon\kappa)^{2s}\int^\infty_0t^{2s+1}e^{-\alpha t^{2s+1}(\epsilon\kappa)^{2s}}e^{-\beta t}dt &\geq (\epsilon\kappa)^{-\frac{2s}{2s+1}}\int^{\infty}_0\rho^{2s+1}e^{-\alpha \rho^{2s+1}-\beta\rho}d\rho.
\end{aligned}
\eeqq
which is a worse lower bound since $\epsilon\kappa>(\epsilon\kappa)^{-\frac{2s}{2s+1}}$ for all $s\in(0,1)$ when $\epsilon\kappa>1$. 

Since the requirement on the Lipschitz constant for $\psi$ is $\text{Lip}(\psi)\leq \kappa$ (with $\kappa>\epsilon^{-1}$), by taking $\psi_1(x) = e^{-\epsilon^{-1}|x'|}$ and repeating previous computations we arrive to the stronger lower bound $\langle v-\mathfrak{u},\psi\rangle = O(1).$
\end{proof}

\section{Approximation via superposition of pencil-beams}\label{sec:superposition}
Let $f\in L^1(Q)$ be such that 
\beqq
f\geq 0\quad\text{and}\quad\text{supp}(f)\Subset \RR^d\times\SS^{d-1}.
\eeqq
We denote by $\{U_{y,\eta}(X,V)\}_{y,\eta}$ a continuous family of pencil-beams indexed by the parameters $(y,\eta)$, so that $U_{y,\eta}$ is the solution to the fractional Fermi pencil-beam equation \eqref{fFpb} with initial condition $G=\delta(X')\delta(V - \epsilon^{-1}\mathcal{S}(\eta))$, extended by zero for $X^d<0$ (the source $F$ is always taken to be null), and for diffusion and absorption coefficients
\beqq
\tilde{\sigma}(X^d) = \sigma(y+X^d\eta)\quad\text{and}\quad \tilde{\lambda}(X^d) = \lambda(y+X^d\eta).
\eeqq
The construction of the approximation via pencil beams is based on the superposition of the $U_{y,\eta}$'s. We define their respective affine transformations as
\beqq
\mathfrak{u}(\cdot,\cdot ; y,\eta) := (2\epsilon)^{-2(d-1)}U_{y,\eta}\circ T^\epsilon_y,\quad (y,\eta)\in \supp(f),
\eeqq 
where each $T^\epsilon_{y,\eta}:\RR^d\times\SS^{d-1}\ni (x,\theta)\mapsto (X,V)\in \RR^d\times\RR^{d-1}$ encodes an affine transformation of the spatial coordinates plus a rescaled stereographic projection of the angular variables. They are given by
\beqq
T^\epsilon_{y,\eta}(x,\theta) := \left((2\epsilon)^{-1}\Pi_{\eta^\perp}(x-y), \eta\cdot(x-y),\epsilon^{-1}\mathcal{S}_{\eta}(\theta)\right),
\eeqq
where $\Pi_{\eta^\perp}x := (\text{Id}-\eta\eta^T)x$ is the orthogonal projection of $x$ onto the subspace $\eta^\perp$, while $S_{\eta}$ denotes the stereographic projection of the unit sphere that takes $\eta$ to the origin in $\RR^{d-1}$  (defined similarly as in Section \ref{sec:intro}).

The approximation via pencil-beams is then defined as
\beq\label{def:general_approx}
\mathfrak{u}(x,\theta) := \int_{Q}f(y,\eta)\mathfrak{u}(x,\theta;y,\eta)dyd\eta,
\eeq
which acts in the distributional sense according to the rule
\beqq
\langle\mathfrak{u},\phi\rangle := \int_{Q}\int_Qf(y,\eta)\mathfrak{u}(x,\theta;y,\eta)\phi(x,\theta)dxd\theta dyd\eta, \quad \phi\in C_0(Q).
\eeqq 

We give once again the statement of the approximations results for the superposition of beams, as formulated in Section \ref{subsec:main_results}. These results follows almost directly from the narrow beam case.
\begin{theorem*}[Theorem \ref{thm:cont_sup}]
The same conclusion of Theorem \ref{thm:narrow_beam} holds for a source $f$ satisfying
\beqq
f\in L^1(Q),\quad f\geq 0\quad\text{and}\quad\text{supp}(f)\Subset \RR^d\times\SS^{d-1}.
\eeqq
and $\mathfrak{u}$ given by the continuous superposition of pencil-beam in \eqref{def:general_approx}.
\end{theorem*}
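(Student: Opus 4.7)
The strategy is to reduce Theorem \ref{thm:cont_sup} to the narrow-beam Theorem \ref{thm:narrow_beam} by exploiting the linearity of the fractional Fokker--Planck, ballistic, and fractional Fermi pencil-beam equations. For $(y,\eta)\in\supp(f)$, denote by $u_{y,\eta}$, $v_{y,\eta}$, and $\mathfrak{u}_{y,\eta}(x,\theta):=\mathfrak{u}(x,\theta;y,\eta)$ the three solutions associated to the concentrated source $\delta_y(x)\delta_\eta(\theta)$ (with $u_{y,\eta},v_{y,\eta}$ interpreted distributionally as in \eqref{ballistic_sol_dist}). By uniqueness and linearity,
\[
u = \int_Q f(y,\eta)\, u_{y,\eta}\,dyd\eta,\quad v = \int_Q f(y,\eta)\, v_{y,\eta}\,dyd\eta,\quad \mathfrak{u} = \int_Q f(y,\eta)\, \mathfrak{u}_{y,\eta}\,dyd\eta,
\]
so that Fubini yields, for every $\psi\in BL_{1,\kappa}$,
\[
\int_Q(u-\mathfrak{u})\psi\,dxd\theta = \int_Q f(y,\eta)\!\int_Q(u_{y,\eta}-\mathfrak{u}_{y,\eta})\psi\,dxd\theta\,dyd\eta,
\]
and analogously for $v-\mathfrak{u}$.

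The second step is to upgrade the narrow-beam estimates to be uniform in the base point. For fixed $(y,\eta)$, the change of coordinates $T^\epsilon_{y,\eta}$ reduces the analysis of $u_{y,\eta}-\mathfrak{u}_{y,\eta}$ to the standard setting of Theorem \ref{thm:narrow_beam} at base point $(0,N)$, replacing $\sigma,\lambda$ by their translated/rotated versions along the beam axis $\{y+t\eta:t\ge0\}$. The bounds $\sigma_0,\lambda_0$ of \eqref{cond_coeff}, the Lipschitz seminorms of $\sigma,\lambda$, and the integrability constants for the fundamental solution $J$ from Lemmas \ref{lemma:int_der_J}--\ref{lemma:int_fLap_J} are either invariant under this change or depend continuously on $(y,\eta)$, and hence are uniformly bounded on the compact set $\supp(f)$. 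Taking the supremum over $\psi\in BL_{1,\kappa}$ in the Fubini identity above, together with the resulting uniform versions $M_1^\ast,M_2^\ast$ of the narrow-beam constants, delivers the two upper bounds
\[
\mathcal{W}^1_\kappa(u,\mathfrak{u}) \le \|f\|_{L^1} M_1^\ast \epsilon^{2s'}\kappa^{s'},\qquad \mathcal{W}^1_\kappa(v,\mathfrak{u}) \le \|f\|_{L^1} M_2^\ast (\epsilon\kappa)^{\min\{2s',1\}}.
\]

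The hard part is the lower bound $\mathcal{W}^1_\kappa(v,\mathfrak{u}) \gtrsim \min\{\epsilon\kappa,1\}$, since the $1$-Wasserstein distance is not superadditive in the source. As $f\ge 0$ is nontrivial and compactly supported, I would fix a Lebesgue point $(y_0,\eta_0)\in\supp(f)$ with $f(y_0,\eta_0)>0$ and a small radius $\rho>0$, independent of $\epsilon,\kappa$, on which $\int_{B_\rho(y_0,\eta_0)}f\,dyd\eta$ is bounded from below. I would then adapt the test function used in the proof of Theorem \ref{thm:ball_vs_pb} via $T^\epsilon_{y_0,\eta_0}$, choosing
\[
\psi(x,\theta) := \chi(\eta_0\cdot\theta)\,e^{-\kappa|\Pi_{\eta_0^\perp}(x-y_0)|}\in BL_{1,\kappa},
\]
where $\chi$ is a fixed smooth bump at $1$ supported in a narrow angular neighborhood of $\eta_0$. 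The contribution from $(y,\eta)\in B_\rho(y_0,\eta_0)$ reproduces the narrow-beam computation and produces a lower bound of order $\min\{\epsilon\kappa,1\}$ times $\int_{B_\rho} f$. The main obstacle is verifying that the contribution from $(y,\eta)\notin B_\rho$ does not spoil this bound: for $\chi$ chosen narrowly enough relative to $\rho$, the ballistic rays from such $(y,\eta)$ miss the angular support of $\chi$, while the mass that $\mathfrak{u}_{y,\eta}$ deposits inside the support of $\psi$ is controlled by the transversal decay estimates \eqref{decay_avg_J} for $J$, and should be of strictly smaller order than $\epsilon\kappa$. Quantifying this remainder uniformly in $\epsilon,\kappa$ is the crux of the argument.
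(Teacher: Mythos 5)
Your treatment of the two upper bounds is in substance the paper's own argument: the paper also reduces to a per-beam estimate that is uniform over the compact set $\supp(f)$ and integrates against $f$. The one technical difference is that the paper never literally writes $u=\int f(y,\eta)u_{y,\eta}\,dyd\eta$ (the well-posedness theory of Section \ref{subsec:fFP} does not cover delta sources for the fractional Fokker--Planck equation); instead it uses duality, $\int_Q u\psi\,dxd\theta=\int_Q f\varphi\,dyd\eta$ with $\varphi$ the adjoint solution, and then invokes Lemma \ref{lemma:approx_fFpb} to compare $\varphi(y,\eta)$ with $\int_Q\mathfrak{u}(x,\theta;y,\eta)\psi\,dxd\theta$ uniformly in $(y,\eta)$. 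You should phrase your first displayed identity for $u$ this way; for $v$ and $\mathfrak{u}$ the superposition formulas are exact by construction, so Fubini is fine there.

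The genuine gap is in the lower bound, and it is precisely the step you flag as "the crux" without carrying it out. The paper's route here is different from yours and avoids the difficulty you create: it writes $\langle v-\mathfrak{u},\psi\rangle=\int_Q f(y,\eta)\,D_{y,\eta}(\psi)\,dyd\eta$ with $D_{y,\eta}(\psi)$ the per-beam difference \eqref{diff_lb}, and then shows that for the chosen test function \emph{each} $D_{y,\eta}(\psi)$ is bounded below by $c\min\{\epsilon\kappa,1\}$ uniformly in $(y,\eta)\in\supp(f)$ (the positivity mechanism being $1-e^{-\tau(t)|\xi|^{2s}}>0$ in the Fourier representation, exactly as in Theorem \ref{thm:ball_vs_pb}). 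Since $f\geq0$, the global lower bound follows at once; there is no "remainder from beams outside $B_\rho$" to control because no mass of $f$ is discarded. Your localization strategy, by contrast, throws away all beams outside $B_\rho(y_0,\eta_0)$ and must then show their contribution to $\langle v-\mathfrak{u},\psi\rangle$ is $o(\epsilon\kappa)$ or at least nonnegative; you give no estimate for this, and it is not clear it holds, since a tilted beam can make $\langle v_{y,\eta},\psi\rangle-\langle\mathfrak{u}_{y,\eta},\psi\rangle$ small or of ambiguous sign once $\psi$ is not adapted to its axis. A second problem with the localization is the main term itself: for $(y,\eta)\in B_\rho(y_0,\eta_0)$ with $\eta\neq\eta_0$, the ballistic ray drifts away from the support of $e^{-\kappa|\Pi_{\eta_0^\perp}(x-y_0)|}$ at rate $|\eta-\eta_0|$ in $t$, so $\psi(y+t\eta,\eta)$ is exponentially attenuated unless $\rho\lesssim\kappa^{-1}$; but then $\int_{B_\rho}f$ degenerates with $\kappa$ and the claimed bound $c\min\{\epsilon\kappa,1\}\int_{B_\rho}f$ is no longer uniform in $\kappa$. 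To repair the argument you should drop the localization and instead prove the per-beam lower bound for a single test function uniformly over $\supp(f)$, as the paper does.
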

\begin{proof}
Let $\psi$ be an arbitrary test function in $BL_{1,\kappa}(\RR^{d}\times\SS^{d-1})$ and $\varphi$ the solution to the backward Fokker--Planck system. We see that
\beqq
\begin{aligned}
\int_Q\psi(x,\theta)\left(u-\mathfrak{u}\right)dxd\theta &= \int_{Q}f(y,\eta)\varphi(y,\eta)dyd\eta\\
&\quad - \int_{Q}f(y,\eta)\left(\int_Q\mathfrak{u}(x,\theta;y,\eta)\psi(x,\theta)dxd\theta \right)dyd\eta.
\end{aligned}
\eeqq
Therefore, the estimate for the 1-Wasserstein distance between the Fokker--Planck solution and the superposition of pencil-beams follows by noticing that 
\beqq
\left|\int_Q\mathfrak{u}(x,\theta;y,\eta)\psi(x,\theta)dxd\theta - \varphi(y,\eta)\right|\lesssim \epsilon^{2s'}\kappa^{s'}
\eeqq
as a consequence of Lemma \ref{lemma:approx_fFpb}, and for a constant uniform with respect to $(y,\eta)$ but depending on $\|f\|_{L^1}$, $\|\lambda\|_{C^1}$ and $\|\sigma\|_{C^1}$.

Regarding the upper and lower bounds of the distance between the ballistic transport solution (i.e. $v(x,\theta)$) and the pencil-beam approximation, we again use the explicit expression of $v$ given in \eqref{ballistic_sol}, and see that
\beqq
\begin{aligned}
\int_Q\psi(x,\theta)\left(v-\mathfrak{u}\right)dxd\theta &= \int_Q\int^{\infty}_0 e^{-\int^t_0\lambda(y-s\eta)ds}f(y-t\eta,\eta)\psi(y,\eta)dtdyd\eta\\
&\quad - \int_{Q}f(y,\eta)\left(\int_Q\mathfrak{u}(x,\theta;y,\eta)\psi(x,\theta)dxd\theta \right)dyd\eta\\
&= \int_Q f(y,\eta)\int^{\infty}_0e^{-\int^t_0\lambda(y+s\eta)ds}\psi(y+t\eta,\eta)dtdyd\eta\\
&\quad - \int_{Q}f(y,\eta)\left(\int_Q\mathfrak{u}(x,\theta;y,\eta)\psi(x,\theta)dxd\theta \right)dyd\eta.
\end{aligned}
\eeqq
Therefore, the upper and lower bounds arise from the analysis of the difference
\beq\label{diff_lb}
\int^{\infty}_0e^{-\int^t_0\lambda(y+s\eta)ds}\psi(y+t\eta,\eta)dt - \int_Q\mathfrak{u}(x,\theta;y,\eta)\psi(x,\theta)dxd\theta.
\eeq
For each pair $(y,\eta)$, a coordinates rotation plus passing to stretched coordinates allow us to simplify the second integral and get
\beqq
\int_0^\infty\int_{\RR^{2(d-1)}}U_{y,\eta}(X',t,V)\Psi(X',t,V) \frac{dX'dVdt}{\leVr^{2(d-1)}}, 
\eeqq
for $\Psi$ defined by the relation $\varphi(x,\theta) = \Psi\circ T^\epsilon_{y,\eta}(x,\theta)$. 
Following the proof of Theorem \ref{thm:ball_vs_pb} it is not hard to realize that
\beqq
\int_0^\infty\int_{\RR^{2(d-1)}}U_{y,\eta}(X',t,V)\Psi(X',t,V) \frac{dX'dVdt}{\leVr^{2(d-1)}} =  \int^\infty_0e^{-\int^t_0\lambda(y+s\eta)ds}\Psi(\vec{0},t,\vec{0})dt  + O((\epsilon\kappa)^{2s'}),
\eeqq
where we take $s'\in(0,s)$ for $s\in(0,1/2]$ and $s'=1/2$ for $s\in(1/2,1)$, and with the remainder independent of the values of $y$ and $\eta$. By noticing that
\beqq
\Psi(\vec{0},t,\vec{0}) = \psi(y+t\eta,\eta),\footnotemark
\eeqq
\footnotetext{If $t=\eta\cdot(x-y)$, then $0 = (\text{Id} -\eta\eta^T)(x-y) = (x-y) - t\eta$, this is, $x=y+t\eta$.}
the upper bound follows directly.

The same computations performed in the proof of Theorem \ref{thm:ball_vs_pb} to obtain a lower bound can be carried out here, in order to bound from below the differences in \eqref{diff_lb} uniformly with respect to $y$ and $\eta$.
\end{proof}

We end this article by stating the analogous result in the case of a discrete superposition of pencil-beams. Given a spread out source $f$ as in the previous theorem, which we assume to be bounded, we can approximate it in the 1-Wasserstein sense as a finite sum of delta sources. Indeed, there exists a simple function $g = \sum_{i=1}^Ia_i\chi_{R_i}$, $a_i\geq 0$ and with $\chi_{R_i}$ the characteristic function for the open set $R_i\subset Q$, which without loss of generality can be taken of diameter $|R_i| \leq \epsilon^2$, so that $\int|f-g|d  xd\theta <\epsilon^2\kappa$. We set $\tilde{a}_i = a_i\text{meas}(R_i)$, with $\text{meas}(R_i)$ denoting the measure of $R_i$ with respect to $dxd\theta$. Then, 
\beqq
\mathcal{W}^1_\kappa(f,\tilde{f})\lesssim \epsilon^{2}\kappa,\quad\tilde{f}(x,\theta):= \sum_{i=1}^I \tilde{a}_i\delta_{x_i}(x)\delta_{\theta_i}(\theta),
\eeqq 
since for any $\psi\in BL_{1,\kappa}$,
\beqq
\begin{aligned}
\left|\int \psi(f-\tilde{f})dxd\theta\right|&\leq \left|\int \psi(f-g)dxd\theta\right|+\left|\int \psi(g-\tilde{f})dxd\theta\right|\\
&\leq \|\psi\|_\infty\epsilon^2\kappa + \sum^I_{i=1}a_i\left|\int_{R_i}\left(\psi(x,\theta) - \psi(x_i,\theta_i)\right)dxd\theta\right|\\
&\lesssim \epsilon^2\kappa + \sum_{i=1}^Ia_i\text{meas}(R_i)|R_i|\kappa = O(\epsilon^2\kappa).
\end{aligned}
\eeqq

Considering the same affine transformations defined previously, we set
\beq\label{def:disc_approx}
\mathfrak{u}(x,\theta) := \sum_{i=1}^I\tilde{a}_i\cdot\mathfrak{u}(x,\theta;x_i,\theta_i),
\eeq
a discrete superposition of pencil beams, where
\beqq
\mathfrak{u}(\cdot,\cdot ; x_i,\theta_i) := (2\epsilon)^{-2(d-1)}U_{x_i,\theta_i}\circ T^\epsilon_{x_i,\theta_i},
\eeqq 
with the $U_{x_i,\theta_i}$'s defined as in the continuous case. Following the same steps as in the previous proof, we deduce the following result:
\begin{theorem*}[Theorem \ref{thm:disc_sup}]
The same conclusion of the Theorem \ref{thm:narrow_beam} holds for a source $f$ satisfying
\beqq
f\in L^1(Q)\cap L^\infty(Q),\quad f\geq 0\quad\text{and}\quad\text{supp}(f)\Subset \RR^d\times\SS^{d-1}.
\eeqq
and $\mathfrak{u}$ given by the discrete superposition of pencil-beam in \eqref{def:disc_approx}.
\end{theorem*}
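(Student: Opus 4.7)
The plan is to mirror the proof of Theorem~\ref{thm:cont_sup} with $\tilde{f}=\sum_i\tilde{a}_i\delta_{(x_i,\theta_i)}$ playing the role of $f$, exploiting the discretization bound $\mathcal{W}^1_\kappa(f,\tilde{f})\lesssim \epsilon^{2}\kappa$ already established just before the theorem statement. Fix $\psi\in BL_{1,\kappa}$ and let $\varphi$ solve the backward fFPE with source $\psi$; set
$$
I(y,\eta):=\int_Q \mathfrak{u}(x,\theta;y,\eta)\,\psi(x,\theta)\,dxd\theta.
$$
Since $\int u\,\psi=\int f\,\varphi$ and $\int \mathfrak{u}\,\psi=\sum_i\tilde{a}_i I(x_i,\theta_i)=\int\tilde{f}\,I$, I would decompose
$$
\int_Q(u-\mathfrak{u})\psi\,dxd\theta
=\int_Q f(y,\eta)\bigl(\varphi(y,\eta)-I(y,\eta)\bigr)dyd\eta
+\int_Q (f-\tilde{f})(y,\eta)\,I(y,\eta)\,dyd\eta.
$$

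Lemma~\ref{lemma:approx_fFpb} (stated at the origin but extending verbatim to every base point $(y,\eta)$) bounds the first integrand uniformly by $C\epsilon^{2s'}\kappa^{s'}$, so the first term contributes $\|f\|_{L^1}\cdot O(\epsilon^{2s'}\kappa^{s'})$. For the second term, my goal is to show that $I$, up to a constant independent of $\epsilon$ and $\kappa$, belongs to $BL_{1,\kappa}$; duality with the Wasserstein bound then gives $|\int (f-\tilde{f})I|\lesssim \epsilon^{2}\kappa$, which is dominated by the main error. Combined, these yield $\mathcal{W}^1_\kappa(u,\mathfrak{u})\lesssim\epsilon^{2s'}\kappa^{s'}$. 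The ballistic comparison follows the same template: $\varphi(y,\eta)$ is replaced by $\int_0^\infty e^{-\int_0^t\lambda(y+s\eta)ds}\psi(y+t\eta,\eta)\,dt$ and Lemma~\ref{lemma:approx_fFpb} is replaced by the uniform bounds obtained in the proof of Theorem~\ref{thm:ball_vs_pb}; for the lower bound, testing against the same exponential $\psi(x,\theta)=e^{-\kappa|x'|}$ translated to be concentrated near one of the support points of $\tilde{f}$ (and using nonnegativity to discard the other pencil-beams) reproduces the $c\min\{\epsilon\kappa,1\}$ bound from the narrow-beam case.

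The main obstacle is verifying that $I(y,\eta)$ is uniformly Lipschitz in $(y,\eta)$ with constant of order $\kappa$. Boundedness is immediate from $\int \mathfrak{u}(\cdot;y,\eta)\,dxd\theta\leq\int_0^\infty\Lambda(X^d)dX^d\lesssim 1$. For Lipschitz regularity, the change of variables $(X,V)=T^\epsilon_{y,\eta}(x,\theta)$ recasts $I(y,\eta)$ as an integral of $U_{y,\eta}(X,V)\,\leVr^{-2(d-1)}$ against $\psi\circ(T^\epsilon_{y,\eta})^{-1}$. The test-function factor depends on $(y,\eta)$ through affine shifts in the spatial variable and the stereographic chart $\mathcal{S}_\eta$ in the angular variable, with $(y,\eta)$-derivative bounded pointwise by $\kappa\bigl(1+X^d+\epsilon|X'|+\epsilon|V|\bigr)$. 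The pencil-beam $U_{y,\eta}$ depends on $(y,\eta)$ only through the coefficients $\tilde{\sigma}(X^d)=\sigma(y+X^d\eta)$ and $\tilde{\lambda}(X^d)=\lambda(y+X^d\eta)$; Lipschitz continuity of $\sigma,\lambda$ together with a continuous-dependence estimate applied to the explicit Fourier formula \eqref{fFBP_sol} yield Lipschitz dependence of $U_{y,\eta}$ in $L^1(\RR^{d}_+\times\RR^{d-1})$. All the weighted $L^1$-moments that appear are controlled uniformly in $(y,\eta)$ by the self-similar structure of $J$ and the moment bounds in Section~\ref{subsec:integ_fFpb} (Lemma~\ref{lemma:int_der_J} and Remark~\ref{rmk:sharp_decay_J}); the delicate contribution is the first transverse moment $\||V|\,U_{y,\eta}\|_{L^1}\lesssim\int_0^\infty(X^d)^{1/(2s)}\Lambda(X^d)\,dX^d$, which is finite thanks to the exponential decay coming from $\lambda_0>0$. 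Assembling these estimates gives the required Lipschitz constant of order $\kappa$ and closes the argument.
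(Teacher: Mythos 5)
Your architecture is the same as the paper's: the paper proves the discrete case by the two ingredients you use, namely the discretization bound $\mathcal{W}^1_\kappa(f,\tilde f)\lesssim\epsilon^2\kappa$ established just before the statement and a rerun of the continuous-superposition argument with $\tilde f$ in place of $f$ (the paper literally says ``following the same steps''). Your decomposition $\int(u-\mathfrak u)\psi=\int f(\varphi-I)+\int(f-\tilde f)I$ is a sensible way to make that precise, and you correctly identify that the second term forces you to verify that $I(y,\eta)$ is uniformly bounded and Lipschitz with constant $O(\kappa)$ --- this is the real content that the paper leaves implicit.

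There is, however, one concrete error in your sketch of that Lipschitz bound. You reduce the angular contribution to controlling $\||V|\,U_{y,\eta}\|_{L^1}$ and assert it is finite ``thanks to the exponential decay coming from $\lambda_0>0$.'' The exponential factor $\Lambda(X^d)$ only controls the $X^d$-integration; the obstruction is the $V$-integral at fixed $X^d$. By the decay \eqref{decay_avg_J}, $\int\mathfrak J\,dX'$ behaves exactly like $(1+|V|^2)^{-(d-1+2s)/2}$, so $\||V|^n\mathfrak J\|_{L^1}$ is finite only for $n<2s$ (this is precisely the restriction in Remark \ref{rmk:sharp_decay_J}); for $s\le 1/2$ the first moment $\||V|\,U_{y,\eta}\|_{L^1}$ diverges and your bound $\int_0^\infty (X^d)^{1/(2s)}\Lambda(X^d)\,dX^d$ is not valid. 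The step is repairable in either of two ways: (i) the $\eta$-dependence of the angular chart is through a rotation $\mathcal S_\eta^{-1}(\epsilon V)=R_\eta\,\mathcal S^{-1}(\epsilon V)$ with $\mathcal S^{-1}(\epsilon V)\in\SS^{d-1}$ bounded, so differentiating in $\eta$ costs only $O(\kappa)$ and no weight $|V|$ actually appears; or (ii) if a factor $\epsilon|V|$ does appear, absorb part of it into the Jacobian weight $\leVr^{-2(d-1)}$ exactly as in \eqref{s'_ineq} to reduce the power to $|V|^{2s'}$ with $s'<s$, for which Lemma \ref{lemma:int_der_J} and Remark \ref{rmk:sharp_decay_J} apply. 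With that correction the argument closes and matches the paper's intended proof.
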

\bigskip

\section*{Acknowledgment}
This research was partially supported by the Office of Naval Research, Grant N00014-17-1-2096 and by the National Science Foundation, Grant DMS-1908736.
\bigskip

\begin{appendix}
\section{Proof of Theorem \ref{thm:wellposedness_sRTE}}\label{appdx:1}
Let $f\in L^2(\RR^d\times\SS^{d-1})$ and $u_\Omega \in W^2_\Omega$ the unique solution to
\beq\label{sRTE_bc}
\left\{\begin{array}{ll}
\theta\cdot\nabla_x u +\lambda u = \cI(u)+f,& \text{in }\Omega\times\SS^{d-1},\\
u=0,&\text{on }\Gamma_-,
\end{array}\right.
\eeq
which is non-negative whenever $f\geq 0$. Since the scattering kernel is integrable and assumed to be symmetric with respect to its angular entries, the existence and uniqueness of $u_\Omega$ follows from \cite[Chapter XXI-Theorem 4]{DL} provided $\lambda\geq \lambda_0>0$. The non-negativity property can be derived from the proof of Proposition 6 in the same reference. 
Moreover, by zero-continuing $u_{\Omega}$ to the whole space, and regardless of $\Omega$, we have the estimates 
$$\|u_{\Omega}\|_{L^2(\RR^{d}\times\SS^{n-1})}\leq \|f\|_{L^2(\RR^n\times\SS^{d-1})},$$
thus, the Banach-Alaoglu theorem gives us a limit function $u\in L^2(\RR^d\times\SS^{d-1})$ and a convergent sequence $\{u_{\Omega_{k}}\}_k$, such that $u_{\Omega_k}\to u$ weakly in $L^2$. Moreover, the limit satisfies the same inequality which then gives us uniqueness of solutions. It remains to show that $u\in W^2$ and it satisfies \eqref{sRTE}.

Without loss of generality, we assume $\{\Omega_k\}_k$ is an increasing sequence of open bounded subsets of $\RR^d$. Let $\Omega_{k_0}$ be an arbitrary element. For any $\varphi\in C_c^\infty(\RR^d\times\SS^{d-1})$ there exist $k_0>0$ so that $\varphi\in C_c^\infty(\Omega_{k_0}\times\SS^{d-1})$, and for $k> k_0$ we have that
$$
\begin{aligned}
&\int_{\RR^d\times\SS^{d-1}}\big((-\theta\cdot\nabla_x\varphi)u_{\Omega_k} +\lambda\varphi u_{\Omega_k} \big)dxd\theta\\
&\hspace{2em}= \int_{\RR^d\times\SS^{d-1}}\int_{\SS^{d-1}}k(x,\theta',\theta)(u_{\Omega_k}(x,\theta') - u_{\Omega_k}(x,\theta))\varphi(x,\theta)d\theta'dxd\theta + \int_{\RR^d\times\SS^{d-1}} f\varphi dxd\theta.
\end{aligned}
$$
Therefore, we can take the limit as $k\to\infty$ and obtain
$$
\begin{aligned}
&\int_{\RR^d\times\SS^{d-1}}\big((-\theta\cdot\nabla_x\varphi)u +\lambda\varphi u \big)dxd\theta\\
&\hspace{2em}= \int_{\RR^d\times\SS^{d-1}}\int_{\SS^{d-1}}k(x,\theta,\theta')(u(x,\theta) - u(x,\theta'))\varphi(x,\theta)d\theta'dxd\theta + \int_{\RR^d\times\SS^{d-1}} f\varphi dxd\theta.
\end{aligned}
$$
On the other hand, using \eqref{sRTE_bc} we see that $\|(\theta\cdot\nabla_xu_{\Omega_k})\mathds{1}_{\Omega_k}\|_{L^2(\RR^d\times\SS^{d-1})}\leq C_{\lambda,b}\|f\|_{L^2(\RR^d\times\SS^{d-1})}$, thus passing to a subsequence we deduce that $u\in W^2$ and $u$ is a strong solution of \eqref{sRTE}. 
We directly deduce that $u\geq 0$ for $f\geq 0$.

\section{Localization and mollification}\label{appdx:loc_moll}
We consider stereographic coordinates on the unit sphere and smooth and compactly supported mollification kernels $\xi(x)$ and $\eta(v)$. The localization and mollification technique is a standard procedure for local operators and we thus focus on the singular integral term in \eqref{fFP_id}. For a fixed pair $(y,v'')\in \RR^d\times\RR^{d-1}$, we consider the test function $\varphi(x,\theta)$ defined via stereographic coordinates as $[\varphi]_{\mathcal{S}}(x,v) = \chi(y,v'')\xi^\delta(y-x)\eta^\delta(v''-v)\lvr^{2(d-1)}$ with $\chi$ a smooth compactly supported function, and 
$$\xi^\delta(x) = \delta^{-d}\xi(\delta^{-d}x)\quad\text{and}\quad\eta^\delta(x) = \delta^{-(d-1)}\eta(\delta^{-(d-1)}x),$$
with $\delta>0$ small, and $\xi$ and $\eta$ smooth, nonnegative functions, with compact support around their respective origins, and such that $\int\xi(x) dx=\int\eta(v) dv=1$. 

In the weak formulation of Fokker--Planck \eqref{weak_sol}, the non-local part corresponds to the integral $\int\sigma(x)\mathcal{B}(u,\varphi)dx$, which in stereographic coordinates rewrites as 
\beqq
\begin{aligned}
&\int\sigma(x)\mathcal{B}(u,\varphi)dx\\
&=\frac{1}{2}\int\int \sigma(x)\int K(\theta',\theta)(u(x,\theta') - u(x,\theta))(\varphi(x,\theta')-\varphi(x,\theta))d\theta'd\theta dx\\ 
&=\int\int \sigma(x)(-\Delta_v)^{s/2}\left(\frac{u(x,v)}{\lvr^{d-1-2s}}\right)\cdot (-\Delta_v)^{s/2}\left(\frac{\varphi(x,v)}{\lvr^{d-1-2s}}\right)dvdx \\
&\quad - c\int\int \sigma(x)u(x,v)\varphi(x,v)\lvr^{-2(d-1)}dvdx.
\end{aligned}
\eeqq
We use the notation $w(x,v)=\frac{u(x,v)}{\lvr^{d-1-2s}}$. Then, for the test function introduced above, we have that
\beqq
\begin{aligned}
&\int\sigma(x)\mathcal{B}(u,\varphi)dx\\
&=\chi(y,v'')\int\int \sigma(x)\xi^\delta(y-x)((-\Delta_v)^{s/2}w)(x,v)\int \frac{\eta^\delta(v''-v)\lvr^{d-1+2s}-\eta^\delta(v''-v')\lvpr^{d-1+2s}}{|v-v'|^{d-1+s}}dv'dv dx \\
&\quad+ c\chi(y,v'')\int\int\sigma(x)u(x,v)\xi^\delta(y-x)\eta^\delta(v''-v)dxdv\\
&=\chi(y,v'')\langle v''\rangle^{d-1+2s}\int\int \sigma(x)\xi^\delta(y-x)((-\Delta_v)^{s/2}w)(x,v)((-\Delta_v)^{s/2}\eta )(v''-v)dv dx \\
&\quad- \chi(y,v'')\int\int \sigma(x)\xi^\delta(y-x)((-\Delta_v)^{s/2}w)(x,v)\\
&\quad\times \int \frac{\eta^\delta(v''-v)\left(\lvr^{d-1+2s} - \langle v''\rangle^{d-1+2s}\right)-\eta^\delta(v''-v')\left(\lvpr^{d-1+2s}-\langle v''\rangle^{d-1+2s}\right)}{|v-v'|^{d-1+s}}dv'dvdx\\
&\quad+ c(\chi\cdot(\xi^\delta\otimes\eta^\delta)*(\sigma u))(y,v'')\\
&=\sigma(y)\chi(y,v'')\langle v''\rangle^{d-1+2s}\int\int \xi^\delta(y-x)((-\Delta_v)^{s/2}w)(x,v)((-\Delta_v)^{s/2}\eta )(v''-v)dv dx \\
&\quad+\chi(y,v'')\langle v''\rangle^{d-1+2s}\int\int (\sigma(x)-\sigma(y))\xi^\delta(y-x)((-\Delta_v)^{s/2}w)(x,v)((-\Delta_v)^{s/2}\eta )(v''-v)dv dx \\
&\quad- \chi(y,v'')\int\int \sigma(x)\xi^\delta(y-x)((-\Delta_v)^{s/2}w)(x,v)\\
&\quad\times \int \frac{\eta^\delta(v''-v')\left(\lvpr^{d-1+2s} - \langle v''\rangle^{d-1+2s}\right)-\eta^\delta(v''-v)\left(\lvr^{d-1+2s}-\langle v''\rangle^{d-1+2s}\right)}{|v-v'|^{d-1+s}}dv'dvdx\\
&\quad+ c(\chi\cdot(\xi^\delta\otimes\eta^\delta)*(\sigma u))(y,v'').
\end{aligned}
\eeqq
Using the Fourier definition of the fractional (Euclidean) Laplacian, we see that
$$
(-\Delta_v)^{s}(f*g) = \mathfrak{F}^{-1}(|k|^{s}\mathfrak{F}(f)\cdot |k|^{s}\mathfrak{F}(g)) = \big((-\Delta_v)^{s}f\big)*\big((-\Delta_v)^{s}g\big).
$$
Thus, the first term on the right-hand side is equivalent to
\beqq
\sigma(y)\langle v''\rangle^{d-1+2s}\chi(y,v'')(-\Delta_v)^{s}\left(\xi^\delta\otimes\eta^\delta*w\right)(y,v'').
\eeqq
However, we would like to obtain an expression for $\lvr^{-(d-1-2s)}(\chi\cdot(\xi^\delta\otimes\eta^\delta*u))$, which involves computing the commutators between convolution and multiplication by $\lvr^{-(d-1-2s)}$, and between the fractional Laplacian and multiplication by $\chi$. We have
\beqq
\eta*(fg) = f(\eta*g) + \int\eta(v')g(v-v')(f(v-v')-f(v))dv'
\eeqq
and also that
\beqq
(-\Delta_v)^s(fg) = f(-\Delta_v)^s(g) + g(-\Delta_v)^s(f).
\eeqq
Then, denoting $\tilde{u}(x,v)=\chi(x,v)\cdot (\xi^\delta\otimes\eta^\delta*u
)(x,v)$ and $\tilde{w}(x,v) = \frac{\tilde{u}(x,v)}{\lvr^{d-1-2s}}$, we see that
\beqq
\begin{aligned}
&\chi(y,v'')(-\Delta_v)^{s}\left(\xi^\delta\otimes\eta^\delta*w\right)(y,v'')\\
&= ((-\Delta_v)^{s}\tilde{w})(y,v'') - \frac{(\xi^\delta\otimes\eta^\delta*u
)(y,v'')}{\langle v''\rangle^{d-1-2s}}((-\Delta_v)^{s}\chi)(y,v'')\\
&\quad - \int \frac{\left(\frac{(\xi^\delta\otimes\eta^\delta*u
)(y,v)}{\langle v\rangle^{d-1-2s}}-\frac{(\xi^\delta\otimes\eta^\delta*u
)(y,v'')}{\langle v''\rangle^{d-1-2s}}\right)\left(\chi(y,v)-\chi(y,v'')\right)}{|v-v''|^{d-1+2s}}dv\\
&\quad +\chi(y,v'')\left((-\Delta_v)^{s}\int\int \xi^\delta(y-x)\eta^\delta(v-v')u(x,v')\left(\frac{1}{\langle v'\rangle^{d-1-2s}} - \frac{1}{\lvr^{d-1-2s}}\right)dv'dx \right)(y,v'').
\end{aligned}
\eeqq
Bringing the above computations together, we conclude that
\beqq
\int\sigma(x)\mathcal{B}(u,\varphi)dx = \sigma(y)[(-\Delta_\theta)^s(\tilde{u})](y,v'') + h(y,v''),
\eeqq
with an error $h(y,v'')$ given by
\beqq
\begin{aligned}
&h(y,v'') \\
&= - \sigma(y)\langle v''\rangle^{d-1+2s}\frac{(\xi^\delta\otimes\eta^\delta*u
)(x,v'')}{\langle v''\rangle^{d-1-2s}}((-\Delta_v)^{s}\chi)(y,v'')\\
&\quad - \sigma(y)\langle v''\rangle^{d-1+2s}\int \frac{\left(\frac{(\xi^\delta\otimes\eta^\delta*u
)(x,v)}{\langle v\rangle^{d-1-2s}}-\frac{(\xi^\delta\otimes\eta^\delta*u
)(x,v'')}{\langle v''\rangle^{d-1-2s}}\right)\left(\chi(y,v)-\chi(y,v'')\right)}{|v-v''|^{d-1+2s}}dv\\
&\quad +\sigma(y)\langle v''\rangle^{d-1+2s}\chi(y,v'')\left((-\Delta_v)^{s}\int\int \xi^\delta(y-x)\eta^\delta(v-v')u(x,v')\zeta_1(v';v)dv'dx \right)(y,v'')\\
&\quad+\chi(y,v'')\langle v''\rangle^{d-1+2s}\int\int (\sigma(x)-\sigma(y))\xi^\delta(y-x)((-\Delta_v)^{s/2}w)(x,v)((-\Delta_v)^{s/2}\eta^\delta )(v''-v)dv dx \\
&\quad- \chi(y,v'')\int\int \sigma(x)\xi^\delta(y-x)((-\Delta_v)^{s/2}w)(x,v)\int \frac{\eta^\delta(v''-v')\zeta_2(v',v'')-\eta^\delta(v''-v)\zeta_2(v,v'')}{|v-v'|^{d-1+s}}dv'dvdx\\
&\quad+ c(\chi\cdot(\xi^\delta\otimes\eta^\delta)*(\sigma u))(y,v'')\\
&=:\sum_{i=1}^6I_i,
\end{aligned}
\eeqq
where $\zeta_1(v';v)=\frac{1}{\langle v'\rangle^{d-1-2s}} - \frac{1}{\lvr^{d-1-2s}}$ and $\zeta_2(v,v'') = \langle v\rangle^{d-1+2s}-\langle v''\rangle^{d-1+2s}$.

We will show that $h\in L^2(\RR^d\times\RR^{d-1};\lvr^{-2(d-1)}dxdv)$, with norm bounded by the sum of $\|u\|_{L^2}$ and $\|(-\Delta_\theta)^{s/2}u\|_{L^2}$, times a constant factors that is independent of the mollifying parameter $\delta$. We recall that integrating over the unit sphere translates into integration on $\RR^{d-1}$ for the measure $\lvr^{-2(d-1)}dv$. Our main tool throughout the next computations is Young's inequality.

$\bullet$ We start with $I_6$, whose upper bounds is obtained by a simple application of Young's inequality:
\beqq
\|I_6\|_{L^2(\RR^{d}\times\SS^{d-1})}\lesssim \|\lvr^{-(d-1)}\chi \cdot\xi^\delta\otimes\eta^\delta*\sigma u\|_{L^2(\RR^{d}\times\RR^{d-1})}\lesssim \|\xi^\delta\otimes\eta^\delta\|_{L^1(\RR^{d}\times\RR^{d-1})}\|u\|_{L^2(\RR^{d}\times\SS^{d-1})}.
\eeqq

$\bullet$ Let's recall the equivalence of norms in \eqref{equiv_norms}. For $I_5$, we have that
\beqq
\|I_5\|_{L^2(\RR^{d}\times\SS^{d-1})} \lesssim C\|(-\Delta_\theta)^{s/2}u\|_{L^2(\RR^{d}\times\SS^{d-1})}, 
\eeqq
with a constant depending on
\beq\label{Young_1}
\begin{aligned}
&\sup_{v''\in \sup(\chi)}\left\|\langle v''-\cdot\rangle^{d-1-2s}\int \frac{\eta^\delta(v')\zeta_2(v''-v',v'')-\eta^\delta(\cdot)\zeta_2(v''-\cdot,v'')}{|\cdot-v'|^{d-1+s}}dv'\right\|_{L^1(\RR^{d-1})};\\
&\sup_{v\in \RR^{d-1}}\left\|\lvr^{d-1-2s}\int \frac{\eta^\delta(\cdot-v')\zeta_2(v',\cdot)-\eta^\delta(\cdot-v)\zeta_2(v,\cdot)}{|v-v'|^{d-1+s}}dv'\right\|_{L^1(\supp(\chi))}.
\end{aligned}
\eeq
Let us verify that the previous quantities are finite and remain bounded as $\delta\to 0$. After  rescaling the integration variables, the first integral takes the form
\beqq
\int \langle v''-\delta v\rangle^{d-1-2s}\left|\int \frac{\zeta_3(v+z,v'')-\zeta_3(v,v'')}{\delta^s|z|^{d-1+s}}dz\right|dv,
\eeqq
where $\zeta_3(v,v'')=\eta(v)\zeta_2(v''-\delta v,v'')=\eta(v)(\langle v''-\delta v\rangle^{d-1+2s} - \langle v''\rangle^{d-1+2s})$. For $|z|<1$,
\beqq
\begin{aligned}
&|\zeta_3(v+z,v'')-\zeta_3(v,v'')|\\
&\leq |z|\int^1_0|\nabla \zeta_3(v+tz,v'')|dt\\
&\leq \delta|z|\int^1_0\int^1_0|\nabla\eta(v+tz)||(v''-\tau\delta (v+tz))\cdot (v+tz)|\langle v''-\tau\delta (v+tz)\rangle^{d-3+2s}d\tau dt \\
&\quad + \delta |z|\int^1_0|\eta(v+tz)||v+tz|\langle v''-\delta (v+z)\rangle^{d-3+2s}dt.
\end{aligned}
\eeqq
Therefore, since $v''\in\supp(\chi)$ and $\eta$ is compactly supported, we have
\beqq
\begin{aligned}
&\int \langle v''-\delta v\rangle^{d-1-2s}\int_{|z|<1} \frac{\zeta_3(v+z,v'')-\zeta_3(v,v'')}{\delta^s|z|^{d-1+s}}dzdv\\
&\leq \delta^{1-s} \int\int_{|z|<1}\int^1_0\int^1_0 \frac{|\nabla\eta(v+tz)||v+tz|\langle v''-\tau\delta (v+tz)\rangle^{d-2+2s}\langle v''-\delta v\rangle^{d-1-2s}}{|z|^{d-1-(1-s)}}d\tau dtdzdv\\
&\quad + \delta^{1-s} \int\int_{|z|<1} \int^1_0\frac{|\eta(v+tz)||v+tz|\langle v''-\delta (v+z)\rangle^{d-3+2s}\langle v''-\delta v\rangle^{d-1-2s}}{|z|^{d-1-(1-s)}}dt dzdv\\
&\lesssim \delta^{1-s}\int_{|z|<1} \frac{1}{|z|^{d-1-(1-s)}}dz.
\end{aligned}
\eeqq
For $|z|\geq 1$ we instead have that
\beqq
\begin{aligned}
&|\zeta_3(v+z,v'')-\zeta_3(v,v'')|\\
&\leq \delta\eta(v+z)\int^1_0|(v''-t\delta(v+z))\cdot(v+z)|\langle v''-t\delta(v+z)\rangle^{d-3+2s}dt\\
&\quad + \delta\eta(v)\int^1_0|(v''-t\delta v)\cdot v|\langle v''-t\delta v\rangle^{d-3+2s}dt.
\end{aligned}
\eeqq
Thus, using again the compactness of the support of $\eta$, for all $v''\in\supp(\chi)$,
\beqq
\begin{aligned}
&\int \langle v''-\delta v\rangle^{d-1-2s}\int_{|z|\geq1} \frac{\zeta_3(v+z,v'')-\zeta_3(v,v'')}{\delta^s|z|^{d-1+s}}dzdv\\
&\leq \delta^{1-s}\int\int_{|z|\geq 1}\int^1_0 \frac{\eta(v+z)|(v''-t\delta(v+z))\cdot(v+z)|\langle v''-t\delta(v+z)\rangle^{d-3+2s}\langle v''-\delta v\rangle^{d-1-2s}}{|z|^{d-1+s}}dtdzdv\\
&\quad + \delta^{1-s}\int\int_{|z|\geq 1}\int^1_0\frac{\eta(v)|(v''-t\delta v)\cdot v|\langle v''-t\delta v\rangle^{d-3+2s}\langle v''-\delta v\rangle^{d-1-2s}}{|z|^{d-1+s}}dtdzdv\\
&\lesssim \delta^{1-s}\int_{|z|\geq1} \frac{1}{|z|^{d-1+s}}dz.
\end{aligned}
\eeqq
Regarding the second quantity in \eqref{Young_1} we see that after a rescaling of all the variables $v,v'$ and $v''$ we get
\beqq
\int_{\delta\supp(\chi)}\langle \delta v\rangle^{d-1-2s}\left|\int \frac{\eta(v''-v')\zeta_2(\delta v',\delta v'')-\eta(v''-v)\zeta_2(\delta v,\delta v'')}{\delta^s|v-v'|^{d-1+s}}dv'\right|dv''.
\eeqq
We enlarge the domain of integration for $v''$ to a fixed ball $B_r$ containing $\supp(\chi)$. Since $\eta$ has compact support, we can find $R>0$ large enough so that $\eta(v''-v')=\eta(v''-v) = 0$ whenever $|v'|,|v|>R$.

Following similar computations as above we deduce the following upper bound for the region $|v|,|v'|<R$:
\beqq
\begin{aligned}
&\int_{B_r}\langle \delta v\rangle^{d-1-2s}\left|\int_{|v'|<R} \frac{\eta(v''-v')\zeta_2(\delta v',\delta v'')-\eta(v''-v)\zeta_2(\delta v,\delta v'')}{\delta^s|v-v'|^{d-1+s}}dv'\right|dv''\\
&\lesssim \delta^{1-s}\int_{|z|<2R} \frac{1}{|z|^{d-1-(1-s)}}dz.
\end{aligned}
\eeqq
In the case of $|v|<R$ and $|v'|>R$, we obtain an upper bound, up to a constant factor, given by
\beqq
\begin{aligned}
&\int_{B_r}\left|\int_{R/2<|v'|<2R} \frac{\eta(v''-v')\zeta_2(\delta v',\delta v'')-\eta(v''-v)\zeta_2(\delta v,\delta v'')}{\delta^s|v-v'|^{d-1+s}}dv'\right|dv''\\
&+\int_{B_r}\left|\int_{|v'|>2R} \frac{\zeta_2(\delta v,\delta v'')}{\delta^s|v-v'|^{d-1+s}}dv'\right|dv''\\
&\lesssim \delta^{1-s}\left(\int_{|z|<R}\frac{1}{|z|^{d-1-(1-s)}}dz+\int_{|z|>2R}\frac{1}{|z|^{d-1+s}}dz\right),
\end{aligned}
\eeqq
and similarly, for $|v|>R/2$ and $|v'|<R$ we get the upper bounds
\beqq
\begin{aligned}
&\int_{B_r}\left|\int_{|v'|<R} \frac{\eta(v''-v')\zeta_2(\delta v',\delta v'')-\eta(v''-v)\zeta_2(\delta v,\delta v'')}{\delta^s|v-v'|^{d-1+s}}dv'\right|dv''\\
&\lesssim \delta^{1-s}\int_{|z|<3R}\frac{1}{|z|^{d-1-(1-s)}}dz,
\end{aligned}
\eeqq
if $R/2<|v|<2R$, and for $|v|>2R$,
\beqq
\begin{aligned}
&\int_{B_r}\langle \delta v\rangle^{d-1-2s}\left|\int_{|v'|<R} \frac{\eta(v''-v')\zeta_2(\delta v',\delta v'')}{\delta^s|v-v'|^{d-1+s}}dv'\right|dv''\\
&\lesssim \delta^{1-s}\frac{\langle \delta v\rangle^{d-1-2s}}{(|v|-|R|)^{d-1+s}} \;\lesssim\;\delta^{1-s}.
\end{aligned}
\eeqq

$\bullet$ We analyze now the $L^2$-norm of $I_4$. Again by Young's inequality we obtain the estimate
\beqq
\|I_4\|_{L^2(\RR^{d}\times\SS^{d-1})} \lesssim C\|(-\Delta_\theta)^{s/2}u\|_{L^2(\RR^{d}\times\SS^{d-1})},
\eeqq
with a constant depending on
\beq\label{Young_2}
\begin{aligned}
&\sup_{(y,v'')\in \supp(\chi)}\left\|(\sigma(\cdot)-\sigma(y))\xi^\delta(y-\cdot)((-\Delta_v)^{s/2}\eta^\delta)(v''-\cdot)\right\|_{L^1(\RR^d\times\RR^{d-1})};\\
&\sup_{(x,v)\in \RR^d\times \RR^{d-1}}\left\|(\sigma(x)-\sigma(\cdot))\xi^\delta(\cdot-x)((-\Delta_v)^{s/2}\eta^\delta)(\cdot-v)\right\|_{L^1(\supp(\chi))}.
\end{aligned}
\eeq
Both terms above are $O(\delta^{1-s})$, since for all $x,y\in \RR^{d}$,
\beqq
\|(\sigma(\cdot)-\sigma(y))\xi^\delta(y-\cdot)\|_{L^1(\RR^d)}, \|(\sigma(x)-\sigma(\cdot))\xi^\delta(\cdot-x)\|_{L^1(\RR^d)}\lesssim \delta
\eeqq
and for all $v\in\RR^{d-1}$ and $v''\in\supp(\chi)$,
\beqq
\|((-\Delta_v)^{s/2}\eta^\delta)(v''-\cdot)\|_{L^1(\RR^{d-1})}, \|((-\Delta_v)^{s/2}\eta^\delta)(\cdot-v)\|_{L^1(\supp(\chi))}\lesssim \delta^{-s}.
\eeqq

$\bullet$ We write $I_3$ as follows
\beqq
\begin{aligned}
I_3 &= \sigma(y)\langle v''\rangle^{d-1+2s}\chi(y,v'')\int \xi^\delta(y-x) \frac{u(x,v')}{\lvpr^{d-1}}\\
&\quad \times \int\frac{\eta^\delta(v-v')\left(\lvpr^{2s}-\frac{\lvpr^{d-1}}{\lvr^{d-1-2s}}\right)-\eta^\delta(v''-v')\left(\lvpr^{2s}-\frac{\lvpr^{d-1}}{\langle v''\rangle^{d-1-2s}}\right)}{|v-v''|^{d-1+2s}}dvdv'dx.
\end{aligned}
\eeqq
Thus, we obtain the estimate
\beqq
\|I_3\|_{L^2(\RR^{d}\times\SS^{d-1})}\leq C\|u\|_{L^2(\RR^{d}\times\SS^{d-1})},
\eeqq 
for a constant depending on
\beq\label{Young_3}
\begin{aligned}
&\sup_{v''\in \supp(\chi)}\left\| \int\frac{\eta^\delta(v-\cdot)\left(\langle \cdot\rangle^{2s}-\frac{\langle \cdot \rangle^{d-1}}{\lvr^{d-1-2s}}\right)-\eta^\delta(v''-\cdot)\left(\langle \cdot\rangle^{2s}-\frac{\langle \cdot\rangle^{d-1}}{\langle v''\rangle^{d-1-2s}}\right)}{|v-v''|^{d-1+2s}}dv\right\|_{L^1(\RR^{d-1})};\\
&\sup_{v'\in  \RR^{d-1}}\left\| \int\frac{\eta^\delta(v-v')\left(\lvpr^{2s}-\frac{\lvpr^{d-1}}{\lvr^{d-1-2s}}\right)-\eta^\delta(v''-v')\left(\lvpr^{2s}-\frac{\lvpr^{d-1}}{\langle \cdot\rangle^{d-1-2s}}\right)}{|v-\cdot|^{d-1+2s}}dv \right\|_{L^1(\supp(\chi))}.
\end{aligned}
\eeq
By rescaling $v,v'$ and $v''$ and after a change of variables the integrand takes the form
\beqq
\begin{aligned}
&\int\frac{\eta^\delta(v''+z-v')\left(\lvpr^{2s}-\frac{\lvpr^{d-1}}{\langle v''+z\rangle^{d-1-2s}}\right)-\eta^\delta(v''-v')\left(\lvpr^{2s}-\frac{\lvpr^{d-1}}{\langle v''\rangle^{d-1-2s}}\right)}{|z|^{d-1+2s}}dz\\
& = \frac{1}{2}\int\frac{\zeta_4(v''+z,v')+\zeta_4(v''-z,v')-2\zeta_4(v'',v')}{\delta^{2s}|z|^{d-1+2s}}dz
\end{aligned}
\eeqq
for $\zeta_4(v,v') = \eta(v-v')\left(\langle \delta v'\rangle^{2s}-\frac{\langle \delta v'\rangle^{d-1}}{\langle \delta v\rangle^{d-1-2s}}\right)$. We also notice that for $v''\in B_r$ (see definition above), since $\eta$ has compact support, the integrand is compactly supported as a function of $z$ and $v'$.  Therefore, we set $R>0$ so that such support is contained $|z|,|v'|<R$. We have that
\beqq
\begin{aligned}
\left|\zeta_4(v''+z,v')+\zeta_4(v''-z,v')-2\zeta_4(v'',v')\right| \leq \|\nabla^2 \zeta_4\|_\infty|z|^2
\end{aligned}
\eeqq
and the desired estimates follow after showing that $\|\nabla^2 \zeta_4\|_\infty\lesssim \delta^2$. With this in mind, we first observe that
\beqq
\left|\langle \delta v'\rangle^{2s}-\frac{\langle \delta v'\rangle^{d-1}}{\langle \delta v\rangle^{d-1-2s}}\right|\lesssim \delta^2\int^1_0\frac{|v+t(v'-v)|}{\langle \delta(v+t(v'-v))\rangle^{d+1-2s}}|v-v'|dt\lesssim \delta^2,
\eeqq
in the support of the integrand. We also have that on the same support,
\beqq
\left|\nabla_v\left(\langle \delta v'\rangle^{2s}-\frac{\langle \delta v'\rangle^{d-1}}{\langle \delta v\rangle^{d-1-2s}}\right)\right|,\left|\nabla^2_v\left(\langle \delta v'\rangle^{2s}-\frac{\langle \delta v'\rangle^{d-1}}{\langle \delta v\rangle^{d-1-2s}}\right)\right|\lesssim \delta^2.
\eeqq

$\bullet$ For $I_2$ we see that for $|v''|>R$ with $R>0$ large enough so the support of $\chi$ is contained in the ball $B_{R/2}\times B_{R/2}$, then
\beqq
\begin{aligned}
|I_2| &\lesssim \langle v''\rangle^{d-1+2s}\int \frac{\left(\frac{(\xi^\delta\otimes\eta^\delta*u
)(y,v)}{\langle v\rangle^{d-1-2s}}-\frac{(\xi^\delta\otimes\eta^\delta*u
)(y,v'')}{\langle v''\rangle^{d-1-2s}}\right)\chi(y,v)}{(|v''|-R/2)^{d-1+2s}}dv\\
&\lesssim \int_{B_{R/2}}\left(\frac{(\xi^\delta\otimes\eta^\delta*u
)(y,v)}{\langle v\rangle^{d-1-2s}}-\frac{(\xi^\delta\otimes\eta^\delta*u
)(y,v'')}{\langle v''\rangle^{d-1-2s}}\right)dv\\
&\lesssim \int_{B_{R/2}}|(\xi^\delta\otimes\eta^\delta*u
)(y,v)|dv + |(\xi^\delta\otimes\eta^\delta*u
)(y,v'')|,
\end{aligned}
\eeqq
and consequently,
\beqq
\begin{aligned}
&\int_{\RR^d\times B_R^c} |I_2(y,v'')|^2\frac{dydv''}{\langle v''\rangle^{2(d-1)}}\\
&\lesssim \int_{B_{R/2}\times B_R^c} \left|\int_{B_{R/2}}|(\xi^\delta\otimes\eta^\delta*u
)(y,v)|dv + |(\xi^\delta\otimes\eta^\delta*u
)(y,v'')|\right|^2\frac{dydv''}{\langle v''\rangle^{2(d-1)}}
\lesssim \|u\|_{L^2(\RR^d\times\SS^{d-1})}.
\end{aligned}
\eeqq
On the other hand, noticing that
\beqq
\begin{aligned}
\frac{(\eta^\delta\otimes\eta^\delta*u)}{\lvr^{d-1-2s}}
&= \int \eta^\delta(v')w(v-v')dv'
+ \int \eta^\delta(v-v')u(v')\left(\frac{1}{\langle v\rangle^{d-1-2s}} - \frac{1}{\lvpr^{d-1-2s}}\right)dv',
\end{aligned}
\eeqq
and denoting $\zeta_5(v,v')=\eta^\delta(v-v')\left(\frac{1}{\langle v\rangle^{d-1-2s}} - \frac{1}{\lvpr^{d-1-2s}}\right)$, we have that
\beqq
\begin{aligned}
&\int_{\RR^d\times B_{R}} |I_2(y,v'')|^2\frac{dydv''}{\langle v''\rangle^{2(d-1)}}\\
&\lesssim \int_{B_{R/2}\times B_{R}}\langle v''\rangle^{4s}\left|\int \frac{\left(\frac{(\xi^\delta\otimes\eta^\delta*u
)( y, v)}{\langle v\rangle^{d-1-2s}}-\frac{(\xi^\delta\otimes\eta^\delta*u
)( y,v'')}{\langle  v''\rangle^{d-1-2s}}\right)(\chi( y,v) - \chi( y,v''))}{|v-v''|^{d-1+2s}}dv\right|^2dydv''\\
&\lesssim \int_{B_{R/2}\times B_{R}}\langle v''\rangle^{4s}\left|\int \frac{\left((\xi^\delta\otimes\eta^\delta*w
)( y, v)-(\xi^\delta\otimes\eta^\delta*w
)( y,v'')\right)(\chi( v) - \chi( v''))}{|v-v''|^{d-1+2s}}dv\right|^2dydv''\\
&\quad + \int_{B_{R/2}\times B_{R}}\langle v''\rangle^{4s}\left|\int (\chi( y,v) - \chi(y, v''))\right.\\
&\hspace{3em} \times\left.\frac{ \left(\int \xi^\delta(y-x)\zeta_5(v,v')u(x,v')dxdv' - \int \xi^\delta(y-x)\zeta_5(v'',v')u(x,v')dxdv' \right)}{|v-v''|^{d-1+2s}}dv\right|^2dydv''\\
&\lesssim \|(-\Delta_v)^{s/2}(\xi^\delta\otimes\eta^\delta*w
)\|_{L^2}^2\\
&\quad + \int_{B_{R/2}\times B_{R}}\left|\int\xi^\delta(y-x)\left( \int\int \frac{ \left|\zeta_5(v,v')- \zeta_5(v'',v')\right|}{|v-v''|^{d-1+2s-1}}|u(x,v')|dv' dv\right)dx\right|^2dydv''\\
&\lesssim \|\xi^\delta\otimes\eta^\delta\|_{L^1}^2\|(-\Delta_v)^{s/2}w
\|_{L^2}^2\\
&\quad + \int_{B_{R/2}\times B_{R}}\left|\int\int \frac{ \left|\zeta_5(v,v')- \zeta_5(v'',v')\right|}{|v-v''|^{d-1+2s-1}}|u(y,v')|dv' dv\right|^2dydv''\\
&\lesssim \|(-\Delta_v)^{s/2}w\|_{L^2}^2\\
&\quad + \int_{B_{R/2}\times B_{R}}\left|\int\left(\int \frac{ \left|\zeta_5(v,v')- \zeta_5(v'',v')\right|}{|v-v''|^{d-1+2s-1}}\lvpr^{d-1}dv\right) \frac{|u(y,v')|}{\lvpr^{d-1}}dv' \right|^2dydv''\\
&\lesssim \|(-\Delta_\theta)^{s/2}u\|_{L^2}^2  + C\|u\|_{L^2(\RR^{d}\times\SS^{d-1})},
\end{aligned}
\eeqq
with $C$ depending on both:
\beqq
\begin{aligned}
&\sup_{v''\in B_R}\int \int \frac{ \left|\zeta_5(v,v')- \zeta_5(v'',v')\right|}{|v-v''|^{d-1+2s-1}}\lvpr^{d-1}dv dv';\\
&\sup_{v'\in \RR^{d-1}}\int_{B_R} \int \frac{ \left|\zeta_5(v,v')- \zeta_5(v'',v')\right|}{|v-v''|^{d-1+2s-1}}\lvpr^{d-1}dv dv''.
\end{aligned}
\eeqq
We need to make sure that these quantities do not diverge as $\delta\to 0$. Indeed, rescaling all three variables $v,v'$ and $v''$, we get
\beqq
\begin{aligned}
&\int \int \frac{ \left|\zeta_5(v,v')- \zeta_5(v'',v')\right|}{|v-v''|^{d-1+2s-1}}\lvpr^{d-1}dv dv'\\
&=\int \int \frac{ \left|\eta(v-v')\left(\frac{1}{\langle \delta v\rangle^{d-1-2s}} - \frac{1}{\langle \delta v'\rangle^{d-1-2s}}\right)- \eta(v''-v')\left(\frac{1}{\langle \delta v''\rangle^{d-1-2s}} - \frac{1}{\langle \delta v'\rangle^{d-1-2s}}\right)\right|}{\delta^{2s-1}|v-v''|^{d-1+2s-1}}\langle \delta v'\rangle^{d-1}dv dv'.
\end{aligned}
\eeqq
The numerator above is bounded by
\beqq
\begin{aligned}
&|v-v''|\int^1_0|\nabla\eta(v''+t(v-v'')-v')|\left|\frac{1}{\langle \delta (v''+t(v-v''))\rangle^{d-1-2s}} - \frac{1}{\langle \delta v'\rangle^{d-1-2s}}\right|\langle \delta v'\rangle^{d-1}dt\\
&+\delta|v-v''|\int^1_0|\eta(v''+t(v-v'')-v')|\frac{\delta\left|v''+t(v-v'')\right|}{\langle \delta(v''+t(v-v''))\rangle^{d+1-2s} }\langle \delta v'\rangle^{d-1}dt\\
&\lesssim \delta|v-v''|\int^1_0 (|\nabla\eta(v''+t(v-v'')-v')| + |\eta(v''+t(v-v'')-v')| )dt,
\end{aligned} 
\eeqq
which implies that the quantities above are $O(\delta^{2(1-s)})$.

We conclude this section with the estimation of $\|I_1\|_{L^2}$. We see that
\beqq
\begin{aligned}
\|I_1\|_{L^2}^2&\lesssim \int\left(\frac{(\xi^\delta\otimes\eta^\delta*u
)(y,v'')}{\langle v''\rangle^{d-1}}\frac{((-\Delta_v)^{s}\chi)(y,v'')}{\langle v''\rangle^{d-1-4s}}\right)^2dydv''\\
&\lesssim\left\|\frac{(\xi^\delta\otimes\eta^\delta*u
)(y,v'')}{\langle v''\rangle^{d-1}}\right\|_{L^2}^2\left\|\frac{((-\Delta_v)^{s}\chi)(y,v'')}{\langle v''\rangle^{d-1-4s}}\right\|_{\infty}^2.
\end{aligned}
\eeqq
The first term on the left is estimated by noticing that
\beqq
\begin{aligned}
\left\|\frac{(\xi^\delta\otimes\eta^\delta*u
)(y,v'')}{\langle v''\rangle^{d-1}}\right\|_{L^2}^2&= \left\|\int \frac{\xi^\delta(y-x)\eta^\delta(v''-v)\lvr^{d-1}}{\langle v''\rangle^{d-1}}\frac{u(x,v)}{\lvr^{d-1}} dxdv\right\|_{L^2}^2 \leq C \|u\|_{L^2(\RR^d\times\SS^{d-1})}^2,
\end{aligned}
\eeqq
for a constant depending on
\beqq
\sup_{v''\in\RR^{d-1}}\int \frac{\eta^\delta(v''-v)\lvr^{d-1}}{\langle v''\rangle^{d-1}}dv
=\sup_{v''\in\RR^{d-1}}\int \frac{\eta(v''-v)\langle \delta v\rangle^{d-1}}{\langle \delta v''\rangle^{d-1}}dv \lesssim \|\eta\|_{L^1};
\eeqq
and similarly
\beqq
\sup_{v\in\RR^{d-1}}\int \frac{\eta^\delta(v''-v)\lvr^{d-1}}{\langle v''\rangle^{d-1}}dv''
\lesssim \|\eta\|_{L^1}.
\eeqq
On the other hand, 
\beqq
\begin{aligned}
\left|\frac{((-\Delta_v)^{s}\chi)(y,v'')}{\langle v''\rangle^{d-1-4s}}\right|& = \frac{1}{\langle v''\rangle^{d-1-4s}} \int \frac{|\chi(y,v''+z) + \chi(y,v''-z) - 2\chi(y,v'')|}{|z|^{d-1+2s}}dz\\
&\leq \left(\sup_{v'',z\in\RR^{d-1}}\frac{|\nabla^2_z\chi(y,v''+z)|}{\langle v''\rangle^{d-1-4s}}\right)\int\frac{1}{|z|^{d-1-2(1-s)}}dz \\
&\quad + \left(\sup_{v'',z\in\RR^{d-1}}\frac{|\chi(y,v''+z)|}{\langle v''\rangle^{d-1-4s}}\right)\int\frac{1}{|z|^{d-1+2s}}dz.
\end{aligned}
\eeqq
Therefore, $\left\|\frac{((-\Delta_v)^{s}\chi)(\cdot)}{\langle \cdot\rangle^{d-1-4s}}\right\|_{\infty}<\infty$.

\section{Estimates for $h(v,v')$}\label{appdx:func_h}
We recast $h(v,v')$ as
\beq\label{ident_h}
\begin{aligned}
h(v,v')=\lvr\lvpr - v\cdot v' -1 &= \lvr\lvpr - \frac{1}{2}(|v|^2 + |v'|^2) + \frac{1}{2}|v-v'|^2 -1\\
&=\frac{1}{2}|v-v'|^2 - \frac{1}{2}\left(\lvr - \lvpr\right)^2.
\end{aligned}
\eeq
This directly gives
\beq\label{ineq1_h}
h(v,v')\leq \frac{1}{2}|v-v'|^2.
\eeq
On the other hand, denoting $p(v') = \lvpr $, one realizes that it satisfies
\beqq
|\nabla p(v')| \leq \frac{|v'|}{\lvpr}=\sqrt{1-\frac{1}{\lvpr^2}}.
\eeqq
Therefore,
\beqq
|p(v') - p(v)| \leq \left(\int^1_0\sqrt{1-\frac{1}{\langle v+t(v'-v)\rangle^2}}dt\right)|v'-v|.
\eeqq
If $\rho =|v-v'| >|v| $ we see that
\beqq
\begin{aligned}
&\int^1_0\sqrt{1-\frac{1}{\langle v+t(v'-v)\rangle^2}}dt \leq \int^1_0\sqrt{1-\frac{1}{1+2(1+t^2)\rho^2}}dt\\
\leq& \frac{\sqrt{2}\rho}{\sqrt{1+2\rho^2}}\int^1_0\sqrt{1+t^2}dt
<\left.\left(\frac{t\sqrt{1+t^2}}{2} + \frac{1}{2}\ln\left(t+\sqrt{1+t^2}\right)\right)\right|_{0}^1
=\frac{1}{\sqrt{2}}\left(1-\frac{1}{\sqrt{2}}\ln(1+\sqrt{2})\right),
\end{aligned}
\eeqq
which is strictly positive (this follows from the inequality $\ln(1+x)<x$ for $x>0$) and smaller than 1. This implies
\beqq
|p(v') - p(v)| \leq \frac{1}{\sqrt{2}}\left(1-\frac{1}{\sqrt{2}}\ln(1+\sqrt{2})\right)|v'-v|.
\eeqq
Similarly, if now we set $\rho = |v|>|v'-v|$, the same computations leads us to conclude that there is $c\in(0,1)$ such that
\beqq
|p(v') - p(v)| \leq c|v-v'|,\quad\forall v,v'\in\RR^{d-1}.
\eeqq
Then, from the above, \eqref{ident_h}, and \eqref{ineq1_h}, we conclude that
\beq\label{final_ineq_h}
\frac{\beta}{2}|v-v'|^2\leq h(v,v')\leq \frac{1}{2}|v-v'|^2,
\eeq
for $\beta=\left(1-\frac{1}{2}\left(1-\frac{1}{\sqrt{2}}\ln(1+\sqrt{2})\right)^2\right)\in(0,1)$.

In addition, $h(v,v+z)$ satisfies the following inequality for $|v|<R$ and $|z|<1$:
\beq\label{ineq_h}
\begin{aligned}
&\left|\frac{1}{h(v,v+z)^{\alpha}} - \frac{1}{h(v,v-z)^{\alpha}}\right|\\
&\lesssim
\frac{\left|h(v,v+z)^{\alpha-1}+ h(v,v-z)^{\alpha}\right|}{h(v,v+z)^{\alpha}h(v,v-z)^{\alpha-1}}\left| \left(\lvr - \langle v+z\rangle\right)^2-\left(\lvr - \langle v-z\rangle\right)^2\right|\\
&\lesssim \frac{1}{|z|^{2(\alpha+1)}}\left| \left(\lvr - \langle v+z\rangle\right)^2-\left(\lvr - \langle v-z\rangle\right)^2\right|\\
&\lesssim \frac{1}{|z|^{d-1+2s+2}}\left|\langle v+z\rangle- \langle v-z\rangle\right|\left| 2\lvr - \langle v+z\rangle - \langle v-z\rangle \right|
\ \lesssim \frac{1}{|z|^{d-1+2s-1}},
\end{aligned}
\eeq
for a constant depending on $R$.

\section{Subelliptic estimates}\label{appdx:subellipticity}
In this appendix we use results from \cite{A} to obtain subelliptic regularity for a kinetic equation with singular scattering kernel of fractional order. We consider
\beq\label{kinetic_eq}
\partial_tf+v\cdot\nabla_xf = \int_{\RR^{d}}k(t,x,v,z)(f(v+z) - f(v))dz + g,\quad t\in\RR,\;x\in\RR^d, v\in\RR^d,
\eeq
for a kernel given by
\beq\label{kinetic_kernel}
k(t,x,v,z)=a(t,x)\frac{\lvr^s\langle v+z\rangle^s}{h(v,v+z)^{\frac{d-1}{2}+s}},\quad \text{with}\quad h(v,v+z) = \frac{1}{2}|z|^2-\frac{1}{2}(\lvr - \langle v+z\rangle)^2,
\eeq
and with $a(t,x)$ a continuous and positive function, bounded from below by a positive constant. 
We prove the following:
\begin{theorem}\label{thm:ext_Alexandre}
Let $f$ be a smooth solution to \eqref{kinetic_eq}-\eqref{kinetic_kernel} with compact support contained in a bounded set $\Omega\subset\RR\times\RR^d\times B_\rho$ for some small $\rho>0$. Then,
\beqq
\|(-\Delta_v)^{s}f\|_{L^2} + \|(-\Delta_x)^{\frac{s}{1+2s}}f\|_{L^2}\leq C_{k,\Omega}\left(\|g\|_{L^2}+\|f\|_{L^2}+ \|(-\Delta_v)^{s/2}f\|_{L^2}\right).
\eeqq

\begin{proof} We rewrite equation \eqref{kinetic_eq} in the following form
\beq\label{kinetic_eq2}
\partial_t+v\cdot\nabla_xf + \tilde{a}(t,x,v)(-\Delta_v)^sf= \tilde{g},
\eeq
with $\tilde{a}(t,x,v) = \left(\frac{2}{\beta}\right)^{\frac{d-1}{2}+s}\lvr^{2s}a(t,x)$ and, denoting $\alpha = \frac{d-1}{2}+s$, a source term given by
\beqq
\begin{aligned}
\tilde{g}&= g +\lvr^{s}a(t,x)\int \frac{(f(v+z) - f(v))(\langle v+z\rangle^s - \lvr^s)}{h(v,v+z)^{\alpha}}dz \\
&\quad+ \lvr^{2s}a(t,x)\int (f(v+z) - f(v))\left(\frac{1}{h(v,v+z)^{\alpha}} -  \frac{1}{(\frac{\beta}{2}|z|^2)^{\alpha}}\right)dz.
\end{aligned}
\eeqq
Furthermore, since $f(t,x,v)$ is compactly supported, we can assume without loss of generality that $\tilde{a}$ is of the form $b^2(t,x,v)\chi(t,x,v)+a_-$, for $b^2(t,x,v)=(\tilde{a}(t,x,v)-\tilde{a}_-)$, $\chi$ smooth and compactly supported, and $\tilde{a}_-=\inf_{t,x,v}\tilde{a}(t,x,v)>0$ constant. This can be seen by following an application of Jensen's inequality when integrating over the region $|v|>R\gg \rho$. In fact,
\beqq
\begin{aligned}
\int_{|v|>R}|\lvr^{2s}(-\Delta_v)^sf |^2dv &= \int_{|v|>R}\lvr^{4s}\left(\int_{|z|<\rho}\frac{|f(z)|}{|v-z|^{d-1+2s}}dz\right)^2dv\\
&\lesssim \int_{|z|<\rho}|f(z)|^2\left(\int_{|v|>R}\frac{\lvr^{4s}}{(|v|-\rho)^{2(d-1)+4s}}dv\right)dz
\ \lesssim \|f\|^2_{L^2}.
\end{aligned}
\eeqq
We then take a cut-off function $\chi$, with compact support satisfying $\Omega\subset\supp(\chi)\subset\RR\times\RR^{d-1}\times B_{R}$, and write
\beqq
\begin{aligned}
\tilde{a}(t,x,v)(-\Delta_v)^sf &= (\tilde{a}(t,x,v)-\tilde{a}_-)(-\Delta_v)^sf +\tilde{a}_-(-\Delta_v)^sf \\
&= (b^2(t,x,v)\chi(t,x,v)+\tilde{a}_-)\int\frac{(f(v)-f(v+z))}{|z|^{d-1+2s}}dz\\
&\quad + b^2(t,x,v)(1- \chi(t,x,v))\int\frac{(f(v)-f(v+z))}{|z|^{d-1+2s}}dz.
\end{aligned}
\eeqq
Therefore, $f$ satisfies the equation
\beqq
\partial_t+v\cdot\nabla_xf + (b^2\chi +\tilde{a}_-)(-\Delta_v)^sf= \tilde{G},
\eeqq
for a source $\tilde{G}$ given by
\beqq
G = \tilde{g} + b^2(t,x,v)(1- \chi(t,x,v))\int\frac{(f(v)-f(v+z))}{|z|^{d-1+2s}}dz,
\eeqq
which we claim it satisfies
\beqq
\|G\|_{L^2}\lesssim \|g\|_{L^2}+\|f\|_{L^2}+\|(-\Delta_v)^{s/2}f\|_{L^2}+\rho \|(-\Delta_v)^{s}f\|_{L^2}.
\eeqq
For this, it only remains to estimate the norm of the two integral terms in the definition of $\tilde{g}$. Let us write $\tilde{g} = g_0 + g_1+g_2$ with $g_0 = g$. 
For the second summand, we see that in the region $|v|>R\gg \rho$, we repeat some computations above in order to get
\beqq
\begin{aligned}
\int_{|v|>R}|g_1(v)|^2dv &=\int_{|v|>R}\left(\lvr^sa(t,x)\int\frac{|f(v+z)|(\langle v+z\rangle^s - \lvr^s)}{|z|^{d-1+2s}}dz\right)^2dv\\
&\lesssim \int_{|z|<\rho}|f(z)|^2\left(\int_{|v|>R}\frac{\lvr^{4s}}{(|v|-\rho)^{2(d-1)+4s}}dv\right)dz
\ \lesssim \ \|f\|^2_{L^2}.
\end{aligned}
\eeqq
For $|v|<R$, the estimate follows by noticing that $g_1(v)$ satisfies
\beqq
\begin{aligned}
&\left| g_1(v)\right| \lesssim \int_{|z|<1}\frac{|f(v+z)-f(v)|}{|z|^{d-1+2s-1}}dz + \int_{|z|>1}\frac{|f(v+z)-f(v)|}{|z|^{d-1+2s}}dz.
\end{aligned}
\eeqq
Therefore, using H\"older's inequality, and Jensen's inequalities for the finite measures $|z|^{-(d-1+2s)}dz$ in $|z|>1$, we obtain
\beqq
\begin{aligned}
\int_{|v|<R}\left| g_1(v)\right|^2dv  &\lesssim \int_{|v|<R}\left(\int_{|z|<1}\frac{|f(v+z)-f(v)|}{|z|^{d-1+2s-1}}dz\right)^2dv + \int_{|v|<R}\left(\int_{|z|>1}\frac{|f(v+z)-f(v)|}{|z|^{d-1+2s}}dz\right)^2dv\\
&\lesssim \int_{|v|<R}\left(\int_{|z|<1}\frac{|f(v+z)-f(v)|^2}{|z|^{d-1+2s}}dz\right)\left(\int_{|z|<1}\frac{1}{|z|^{d-1-(1-s)}}\right)dv \\
&\quad + \int_{|v|<R}\int_{|z|>1}\frac{|f(v+z)-f(v)|^2}{|z|^{d-1+2s}}dzdv
\ \lesssim \ \|(-\Delta_v)^{s/2}f\|^2_{L^2}+\|f\|^2_{L^2}.
\end{aligned}
\eeqq
We now estimate $g_2(v)$, which we rewrite as
\beq\label{g2}
\begin{aligned}
g_2(v)=&\frac{\lvr^{2s}a(t,x)}{2}\int (f(v+z) + f(v-z)- 2f(v))\left(\frac{1}{h(v,v+z)^{\alpha}} - \frac{1}{(\frac{1}{2}|z|^2)^{\alpha}}\right)dz\\
&-\frac{\lvr^{2s}a(t,x)}{2}\int (f(v+z) - f(v))\left(\frac{1}{h(v,v+z)^{\alpha}} - \frac{1}{h(v,v-z)^{\alpha}}\right)dz.
\end{aligned}
\eeq
We then split the integration in two and write $g_2(v) = g_2^{>}(v)+g_2^{<}(v)$, with each summand associated to the respective region of integration $|z|>1$ and $|z|<1$. The integrals in $g_2^{>}(v)$ are easily bounded by $\|f\|_{L^2}$ and it thus remains to analyze $g_2^{<}(v)$.

We next decomposes $g_2^{<}(v) = g_{2,1}^{<}(v)+g_{2,2}^{<}(v)$ following \eqref{g2}. We notice that for $g_{2,2}^{<}(v)$ we can repeat the trick of restricting the estimate to $|v|<R$ for some $R\gg \rho$, since otherwise, the compact support of $f$ allows to estimate the $L^2$-norm of those integrals by $\|f\|_{L^2}$. Then, the remaining part (i.e. for $|v|<R$) is bounded by a constant factor times
\beqq
\begin{aligned}
&\int_{|z|<1} |f(v+z) - f(v)|\left|\frac{1}{h(v,v+z)^{\alpha}} - \frac{1}{h(v,v-z)^{\alpha}}\right|dz.
\end{aligned}
\eeqq
It follows directly from \eqref{ineq_h} that
\beqq
|g_{2,2}^{<}(v)|\lesssim \int_{|z|<1}\frac{|f(v+z)-f(v)|}{|z|^{d-1+2s-1}}dz,\quad \text{for all }|v|<R,
\eeqq
and consequently
\beqq
\|g_{2,2}^{<}(v)\|_{L^2}\lesssim \|(-\Delta_v)^{s/2}f\|_{L^2}+\|f\|_{L^2}.
\eeqq

Finally for $g_{2,1}^{<}(v)$, we take a smooth cutoff $\chi$ supported inside $\{|v|<2\rho\}$, and $\chi=1$ in $\{|v|\leq \rho\}$, and split the coefficient $\lvr^{2s}a(t,x) = a_1(t,x,v) + a_2(t,x,v)$, with $a_1(t,x,v)=\chi(v)\lvr^{2s}a(t,x)$. When estimating the norm of $g_{2,1}^{<}(v)$, we obtain an integral associated to each coefficient $a_1$ and $a_2$. For the latter, the associated integral is easily bounded by $C_\rho\|f\|_{L^2}$ as we have done before (now taking $R=2\rho$), for a constant that increases as $\rho \to 0$. For $a_1$ we have that
\beqq
\begin{aligned}
&\|\chi(v)g_{2,1}^{<}(v)\|_{L^2}^2 \\
&= \int\left(a_1(t,x,v)\int_{|z|<1} (f(v+z) + f(v-z)- 2f(v))\left(\frac{1}{h(v,v+z)^{\alpha}} - \frac{1}{(\frac{1}{2}|z|^2)^{\alpha}}\right)dz\right)^2dv\\
&=\int\left(\int_{|z|<1} \hat{f}(\eta)(e^{i\eta\cdot z} + e^{-i\eta\cdot z}- 2)\int e^{-i\eta\cdot v}a_1(t,x,v)\left(\frac{1}{h(v,v+z)^{\alpha}} - \frac{1}{(\frac{1}{2}|z|^2)^{\alpha}}\right)dvdz\right)^2d\eta.
\end{aligned}
\eeqq
We first notice that due to the compact support of $a_1(t,x,v)$ as a function of $v$, which is contained in the ball $|v|<2\rho$, we have that
\beqq
\left|\int e^{-i\eta\cdot v}a_1(t,x,v)\left(\frac{1}{h(v,v+z)^{\alpha}} - \frac{1}{(\frac{1}{2}|z|^2)^{\alpha}}\right)dv\right|\lesssim \frac{\rho^2}{|z|^{2\alpha}}.
\eeqq
This follows from the fact that the above difference inside the parentheses is bounded (up to a constant factor) by $|v|^2/|z|^{2\alpha}$.

We split the integration in $\|\chi(v)g_{2,2}^{<}(v)\|_{L^2}^2$ into several regions. For $|\eta|<1$, we simply estimate
\beqq
|e^{i\eta\cdot z} + e^{-i\eta\cdot z}- 2|\lesssim |\eta|^2|z|^2\leq |z|^2,
\eeqq
which allows us to bound the integral on this region by a constant times $\|f\|_{L^2}^2$. For $1<|\eta|<1/|z|$, we use the the inequality $|e^{i\eta\cdot z} + e^{-i\eta\cdot z}- 2|\lesssim |\eta|^2|z|^2$, and
\beqq
|\eta|^2\int_{|z|<1/|\eta|}\frac{1}{|z|^{2\alpha-2}}dz\lesssim |\eta|^{2s}.
\eeqq
Finally, for $|\eta|>1/|z|$ and $\delta\in(0,2s)$, we have that $|z|^{-2\alpha}\leq |\eta|^{2s-\delta}/|z|^{d-1+\delta}$ and
\beqq
|\eta|^{2s-\delta}\int_{1/|\eta|<|z|<1}\frac{1}{|z|^{d-1+\delta}}dz\lesssim |\eta|^{2s-\delta}\frac{1}{t^\delta}\Big|^{t=1}_{t=1/|\eta|}\lesssim |\eta|^{2s}.
\eeqq
With the aids of the previous inequalities and Jensen's inequality, we obtain
\beqq
\begin{aligned}
\|\chi(v)g_{2,1}^{<}(v)\|_{L^2}^2&\lesssim \rho^2\int |\hat{f}(\eta)|^2(1 + |\eta|^{2s})d\eta\lesssim\rho^2\left(\|f\|^2_{L^2}+\|(-\Delta_v)^{s}f\|^2_{L^2}\right).
\end{aligned}
\eeqq
In summary, we have deduced the estimate
\beqq
\|g_2(v)\|_{L^2}\lesssim C_\rho \|f\|_{L^2} + \|(-\Delta_v)^{s/2}f\|_{L^2}+\rho\|(-\Delta_v)^sf\|_{L^2}.
\eeqq

We then apply Theorem 1.2 from \cite{A} and obtain
\beqq
\|(-\Delta_v)^{s}f\|_{L^2} + \|(-\Delta_x)^{\frac{s}{1+2s}}f\|_{L^2}\leq C_k\left(\|\tilde{G}\|_{L^2}+\|f\|_{L^2}\right),
\eeqq
where if $f$ is assumed to have a sufficiently small support, then we can absorb the term $\rho \|(-\Delta_v)^{s}f\|_{L^2}$ in the upper bound with the left hand side and conclude that
\beqq
\|(-\Delta_v)^{s}f\|_{L^2} + \|(-\Delta_x)^{\frac{s}{1+2s}}f\|_{L^2}\leq C_{k,\rho}\left(\|g\|_{L^2}+\|f\|_{L^2}+\|(-\Delta_v)^{s/2}f\|_{L^2}\right).
\eeqq
\end{proof}

\begin{remark}\label{remark_D}
From the previous theorem and its proof we also obtain that 
\beqq
\left\|\int_{\RR^{d}}k(t,x,v,z)(f(v+z) - f(v))dz\right\|_{L^2}\lesssim \|f\|_{L^2}+ \|(-\Delta_v)^{s/2}f\|_{L^2}+\|(-\Delta_v)^{s}f\|_{L^2}.
\eeqq
\end{remark}
\end{theorem}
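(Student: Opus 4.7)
The plan is to reduce \eqref{kinetic_eq} to an equation of the model form
$$
\partial_t f + v\cdot\nabla_x f + a(t,x,v)(-\Delta_v)^s f = \tilde G,
$$
with a coefficient $a$ bounded below by a strictly positive constant, and then to invoke a known subelliptic regularity theorem for fractional kinetic equations (such as Alexandre's Theorem~1.2 in \cite{A}). The remainder $\tilde G$ must be controlled in $L^2$ by $\|g\|_{L^2}$, $\|f\|_{L^2}$, $\|(-\Delta_v)^{s/2}f\|_{L^2}$, and a \emph{small} multiple of $\|(-\Delta_v)^s f\|_{L^2}$ that can be absorbed into the left-hand side using the smallness of $\rho$.

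First I would extract the leading order of the collision operator. Using the two-sided estimate $\tfrac{\beta}{2}|z|^2\leq h(v,v+z)\leq \tfrac12|z|^2$ from Appendix~\ref{appdx:func_h}, I write
$$
\int_{\RR^d} k(t,x,v,z)(f(v+z)-f(v))\,dz \;=\; \tilde a(t,x,v)(-\Delta_v)^s f \;-\; E_1 \;-\; E_2,
$$
where $\tilde a(t,x,v) = c_{d,s}\,a(t,x)\,\lvr^{2s}$ (with $c_{d,s}$ the reciprocal of the fractional-Laplacian normalization constant times $(2/\beta)^{(d-1)/2+s}$). The error $E_1$ records the mismatch $\lvr^s\langle v+z\rangle^s - \lvr^{2s}$ in the numerator, while $E_2$ records the mismatch between $h(v,v+z)^{(d-1)/2+s}$ and $(\tfrac{\beta}{2}|z|^2)^{(d-1)/2+s}$ in the denominator. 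Then, since $f$ is supported in $\{|v|<\rho\}$, I would localize $\tilde a = a_- + (\tilde a-a_-)\chi(v) + (\tilde a-a_-)(1-\chi(v))$ with $\chi\equiv 1$ on $B_{2\rho}$; the last summand, when paired with $(-\Delta_v)^s f$, is just a far-field tail and gives $L^2$ contribution bounded by $\|f\|_{L^2}$.

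The heart of the proof is estimating $E_1$ and $E_2$ in $L^2$. For $E_1$, splitting $|z|<1$ vs.\ $|z|>1$ and using $|\langle v+z\rangle^s-\lvr^s|\lesssim |z|$ on the near region cancels one power from the $|z|^{-(d-1+2s)}$ singularity, yielding an expression dominated by the seminorm encoded in $\|(-\Delta_v)^{s/2}f\|_{L^2}$; the far region is handled by $\|f\|_{L^2}$ directly, using a Jensen/Schur argument on the finite measure $|z|^{-(d-1+2s)}\mathbf 1_{|z|>1}dz$. For $E_2$ near $z=0$, the symmetrization $f(v+z)+f(v-z)-2f(v)$ together with the pointwise inequality
$$
\Bigl|\tfrac{1}{h(v,v+z)^{\alpha}}-\tfrac{1}{(\frac{\beta}{2}|z|^2)^{\alpha}}\Bigr|\;\lesssim\;\frac{|v|^2}{|z|^{2\alpha}},\qquad \alpha=\tfrac{d-1}{2}+s,
$$
combined with the antisymmetric remainder bound from \eqref{ineq_h}, is the key. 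The factor $|v|^2\leq \rho^2$ on $\supp f$ is crucial: after a Plancherel computation splitting the $\eta$-integral into $|\eta|<1$, $1<|\eta|<1/|z|$, and $|\eta|>1/|z|$, one obtains
$$
\|E_2\|_{L^2}\;\lesssim\;\rho\bigl(\|f\|_{L^2}+\|(-\Delta_v)^s f\|_{L^2}\bigr)+\|(-\Delta_v)^{s/2}f\|_{L^2}.
$$

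Finally I would apply \cite[Theorem~1.2]{A} to the reduced equation with coefficient $a_-+(\tilde a-a_-)\chi\geq a_->0$, obtaining
$$
\|(-\Delta_v)^s f\|_{L^2}+\|(-\Delta_x)^{s/(1+2s)}f\|_{L^2}\;\lesssim\;\|\tilde G\|_{L^2}+\|f\|_{L^2},
$$
and conclude by absorbing the $\rho\,\|(-\Delta_v)^s f\|_{L^2}$ piece of $\|\tilde G\|_{L^2}$ into the left-hand side, provided $\rho$ is chosen small enough. The main obstacle is the $E_2$ estimate: a naive bound would produce $\|(-\Delta_v)^s f\|_{L^2}$ on the right without the $\rho$ prefactor and prevent the absorption step. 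Extracting that $\rho$ requires genuinely using the compact $v$-support together with the second-order Taylor structure of $h(v,v+z)-\tfrac{\beta}{2}|z|^2$ at $v=0$ via Fourier analysis, rather than just a pointwise kernel bound.
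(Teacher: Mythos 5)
Your proposal follows essentially the same route as the paper's proof: the same reduction to the model equation with coefficient $\tilde a = c\,\lvr^{2s}a(t,x)$ bounded below, the same splitting of the remainder into the numerator-mismatch and denominator-mismatch errors, the same symmetrization plus Plancherel argument (with the three-region split in $\eta$) to extract the crucial $\rho$ prefactor from the compact $v$-support, and the same final absorption after invoking \cite[Theorem~1.2]{A}. You have correctly identified the key difficulty and its resolution, so there is nothing substantive to add.
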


\end{appendix}


\end{document}